\documentclass[12pt]{amsart}
\usepackage{amssymb}
\usepackage{amsmath}
\usepackage{bbm} 
\usepackage[dvips]{graphicx}
\usepackage{hyperref}
\usepackage[capitalize,nameinlink,noabbrev,nosort]{cleveref}
\usepackage[dvipsnames]{xcolor}
\usepackage{todonotes}
\usepackage{mathtools} 
\usepackage{enumitem}
\usepackage[headings]{fullpage}
\usepackage{ytableau}
\usepackage{yhmath}
\usepackage{mathdots}

\usepackage{tikz}
\usetikzlibrary{arrows}
\usetikzlibrary{decorations.markings}
\usetikzlibrary{tikzmark,decorations.pathreplacing}
\usetikzlibrary{shapes.geometric}
\numberwithin{equation}{section}
\usepackage{float}
\makeatletter
\def\Ddots{\mathinner{\mkern1mu\raise\p@
\vbox{\kern7\p@\hbox{.}}\mkern2mu
\raise4\p@\hbox{.}\mkern2mu\raise7\p@\hbox{.}\mkern1mu}}
\makeatother

\newcommand{\ds}{\displaystyle}
\DeclareMathOperator{\Sp}{Sp}

\DeclareMathOperator{\GL}{GL}
\DeclareMathOperator{\OO}{SO}

\renewcommand{\sp}{\mathrm{sp}}
\DeclareMathOperator{\oo}{so}
\renewcommand{\OE}{\mathrm{O}}
\renewcommand{\oe}{\mathrm{o}^{\text{even}}}

\DeclareMathOperator{\sgn}{sgn}
\DeclareMathOperator{\inv}{inv}
\newcommand{\floor}[1]{\left\lfloor #1 \right\rfloor}

\newcommand{\core}[2]{\mathrm{core}_{#2}{(#1)}}
\newcommand{\quo}[2]{\mathrm{quo}_{#2}{(#1)}}
\newcommand{\x}{\bar{x}}
\newcommand{\X}{\overline{X}}

\DeclareMathOperator{\rk}{rk}
\DeclareMathOperator{\rev}{rev}

\newtheorem{thm}{Theorem}[section]
\newtheorem{prop}[thm]{Proposition}
\newtheorem{cor}[thm]{Corollary}
\newtheorem{lem}[thm]{Lemma}
\newtheorem{defn}[thm]{Definition}
\newtheorem{rem}[thm]{Remark}

\newtheorem{eg}[thm]{Example}

\crefname{thm}{Theorem}{Theorems}

\title
{Factorization of classical characters twisted by roots of unity}
\author{Arvind Ayyer}
\address{Arvind Ayyer, Department of Mathematics, 
Indian Institute of Science, Bangalore  560012, India.}
\email{arvind@iisc.ac.in}

\author{Nishu Kumari}
\address{Nishu Kumari, Department of Mathematics, 
Indian Institute of Science, Bangalore  560012, India.}
\email{nishukumari@iisc.ac.in}

\date{\today}

\begin{document}

\begin{abstract}
For a fixed integer $t \geq 2$, we consider the irreducible characters of representations of the classical groups of types A, B, C and D, namely $\GL_{tn}, \OO_{2tn+1}, \Sp_{2tn}$ and $\OE_{2tn}$, 
evaluated at elements $\omega^k x_i$ for $0 \leq k \leq t-1$ and $1 \leq i \leq n$, where $\omega$ is a primitive $t$'th root of unity.
The case of $\GL_{tn}$ was considered by D. J. Littlewood (AMS press, 1950) and independently by D. Prasad (Israel J. Math., 2016).
In this article, we give a uniform approach for all cases.
We also look at $\GL_{tn+1}$ where we specialize the elements as before and set the last variable to $1$.
In each case, we characterize partitions for which the character value is nonzero in terms of what we call $z$-asymmetric partitions, where $z$ is an integer which depends on the group.
Moreover, if the character value is nonzero, we prove that it factorizes into characters of smaller classical groups.
The proof uses Cauchy-type determinant formulas for these characters and involves a careful study of the beta sets of partitions.
We also give product formulas for general $z$-asymmetric partitions and $z$-asymmetric $t$-cores.
Lastly, we show that there are infinitely many $z$-asymmetric $t$-cores for $t \geq z+2$.
\end{abstract}

\subjclass[2010]{20G05, 20G20, 05A15, 05E05, 05E10}
\keywords{Weyl character formula, classical groups, twisted characters, factorizations, $z$-asymmetric partitions, generating functions}

\maketitle 

\section{Introduction} 

The characters of irreducible representations of the classical families of groups, namely the general linear, symplectic and orthogonal groups are amazing families of symmetric Laurent polynomials indexed by integer partitions. In particular, the {polynomial characters} of the general linear groups are the \emph{Schur polynomials}, which are extremely well-studied. They form the most natural basis of the ring of symmetric functions, which are orthonormal with respect to the standard \emph{Hall inner product}. For background, see \cite{macdonald-2015}.

These families of Laurent polynomials also satisfy nontrivial relations, which are not well-understood from the point of view of representation theory. For instance, it was shown in \cite{CiuKra09} that the
Schur polynomial for a rectangular partition in $2n$ variables specialized to the last $n$ variables being reciprocals of the first $n$ variables becomes a product of two other classical characters. In some cases, this is the product of a symplectic and an even orthogonal character, and in the other, {it is} the product of two odd orthogonal characters. 
Similar factorization results were obtained in \cite{AyyBehFis16} for so-called \emph{double staircase partitions}, i.e. partitions of the form $(k,k,k-1,k-1,\dots,1,1)$ or $(k,k-1,k-1,\dots,1,1)$.
This kind of factorization was generalized in \cite{ayyer-behrend-2019} 
for a large class of partitions and further, to skew-Schur functions, i.e. induced characters, in \cite{ayyer-fischer-2020}. 

In a different direction, Littlewood~\cite{littlewood-1950} and independently Prasad~\cite{prasad-2016} considered factorizations of Schur polynomials in $2n$ variables where the last $n$ variables were negatives of the first $n$ variables motivated by a celebrated result of Kostant~\cite{kostant-1976}. {They} showed that such a factorization is nonzero if and only if the corresponding $2$-core is empty, and if it is nonzero, it factors into characters for the $2$-quotients; see \cref{sec:summary-results} for the definitions. {They} further generalized this result to $tn$ variables, for $t \geq 2$ a fixed positive integer, specialized to $(\exp(2 \pi \iota k/t) x_j)_{0 \leq k \leq t-1, 1 \leq j \leq n}$, obtaining similar results. 
We will think of these as twisted characters, where the twists are by all the $t$'th roots of unity.

We note in passing that Schur polynomials evaluated at roots of unity and their powers have been considered in \cite{macdonald-2015,rhoades-2010}.

In this work, we generalize Littlewood's results to other classical groups. We first consider characters of $\GL_{tn+1}$, where we add an extra variable set to $1$. This is stated as \cref{thm:schur-1}.
We then consider the classical groups $\Sp_{2tn}, \OE_{2tn}$ and $\OO_{2tn+1}$ and obtain factorizations for their characters under the same specialization as that of Littlewood. These are stated as \cref{thm:sympfact}, \cref{thm:eorthfact} and \cref{thm:oorthfact} respectively.
Our proofs are more involved for the following reason. For the general linear group, there is only one possible value of the $t$-core for which the twisted character is nonzero, namely the empty partition. For the other classical characters, there are many possible values of the $t$-core for which the character is nonzero. We will show that these are $t$-cores which can be written in Frobenius coordinates as $(\alpha | \alpha + z)$, where the value of $z$ depends on the group, and which we call $z$-asymmetric partitions. For the study here, $z \in \{-1,0,1\}$.

The plan for the rest of the paper is as follows.
We give all the definitions, statements of our results and illustrative examples in \cref{sec:summary-results}.
Our strategy is to give uniform proofs of these results. To that end, we formulate results on beta sets, generating functions and determinant identities in \cref{sec:background}.
We prove the Schur factorizations in \cref{sec:schur} including a self-contained proof of Littlewood's result, \cref{thm:schur-fac}.
We prove the new factorizations of classical characters in \cref{sec:fact other}. We give all the details for the symplectic factorization, and are much more sketchy about the even and odd orthogonal character factorization.
Finally, we prove generating function formulas for $z$-asymmetric partitions
and $z$-asymmetric $t$-cores in \cref{sec:gf}. In particular, we will prove \cref{thm:inf-cores} there, showing that there are infinitely many $t$-cores at which our character values are nonzero for $t \geq 3$.

\section{Summary of results}
\label{sec:summary-results}

Throughout, we will fix $t$ to be an integer greater than or equal to $2$. 
Let $\omega$ be a primitive $t$'th root of unity, i.e. $\omega^t = 1$. 
We will also use $n$ for a fixed positive integer and let $X = (x_1,\dots,x_n)$ be a tuple of commuting indeterminates. For any integer $j$,
we set $X^j = (x_1^j,\dots,x_n^j)$, and for $a \in \mathbb{R}$, set $aX = (a x_1,\dots,a x_n)$.
Define $\x = 1/x$ for an indeterminate $x$ and write $\X = (\x_1,\dots,\x_n)$. 

Recall that a {\em partition} $\lambda$ is a weakly decreasing sequence of nonnegative integers
$\lambda = (\lambda_1,\dots,\lambda_m)$.
The {\em length} of a partition $\lambda$, denoted $\ell(\lambda)$ is the number of nonzero parts of $\lambda$. By $a + \lambda$, we will mean the partition $(a + \lambda_1,\dots, a + \lambda_m)$. For a partition $\lambda$ and an integer $m$ such that $\ell(\lambda) \leq m$, 
define the {\em beta-set of $\lambda$} by $\beta(\lambda) \equiv \beta(\lambda,m) = (\beta_1(\lambda,m),\dots,\beta_m(\lambda,m))$ by $\beta_i(\lambda) = \lambda_i+m-i$.
We will use the convention that we will write $\beta(\lambda)$ whenever $m$ is clear from the context.

We write down the explicit Weyl character formulas for the infinite families of classical groups. 
For completeness, we also write down the Weyl denominator identities in each case.
See \cite{FulHar91} for more details and background.
For a partition $\lambda=(\lambda_1,\ldots,\lambda_n)$, the \emph{Schur polynomial} or \emph{general linear (type A) character} of $\GL_n$ is given by
\begin{equation}
\label{gldef}
s_\lambda(X)=\frac{\displaystyle\det_{1\le i,j\le n}\Bigl(x_i^{\beta_j(\lambda)}\Bigr)}
{\displaystyle\det_{1\le i,j\le n}\Bigl(x_i^{n - j}\Bigr)}.
\end{equation}
The denominator is the standard Vandermonde determinant,
\begin{equation}
\label{gldenom}
\det_{1\le i,j\le n}\Bigl(x_i^{n - j}\Bigr)
= \prod_{1\le i<j\le n}(x_i-x_j).
\end{equation}
The \emph{odd orthogonal (type B) character} of the group $\OO(2n+1)$ at the representation indexed by $\lambda = (\lambda_1,\dots,\lambda_n)$ is given by
\begin{equation}
\label{oodef}
\oo_\lambda(X)=
\frac{\ds \det_{1\le i,j\le n}\Bigl(x_i^{\beta_j(\lambda)+1/2}-\x_i^{\beta_j(\lambda)+1/2}\Bigr)}
{\ds \det_{1\le i,j\le n}\Bigl(x_i^{n-j+1/2}-\x_i^{n-j+1/2}\Bigr)} 
= \frac{\ds \det_{1\le i,j\le n}\Bigl(x_i^{\beta_j(\lambda)+1}-\x_i^{\beta_j(\lambda)}\Bigr)}
{\ds \det_{1\le i,j\le n}\Bigl(x_i^{n-j+1}-\x_i^{n-j}\Bigr)},
\end{equation}
and the denominator here is
\begin{equation}
\label{oodenom}
\det_{1\le i,j\le n}\Bigl(x_i^{n-j+1/2}-\x_i^{n-j+1/2}\Bigr)
= \prod_{i=1}^n\bigl(x_i^{1/2}-\x_i^{1/2}\bigr)\,\prod_{1\le i<j\le n}(x_i+\x_i-x_j-\x_j).
\end{equation}
The \emph{symplectic (type C) character} of the group $\Sp(2n)$ at the representation indexed by $\lambda = (\lambda_1,\dots,\lambda_n)$ is given by
\begin{equation}
\label{spdef}
\sp_\lambda(X)=
\frac{\ds \det_{1\le i,j\le n}\Bigl(x_i^{\beta_j(\lambda)+1}-\x_i^{\beta_j(\lambda)+1}\Bigr)}
{\ds \det_{1\le i,j\le n}\Bigl(x_i^{n-j+1}-\x_i^{n-j+1}\Bigr)},
\end{equation}
and the denominator here is
\begin{equation}
\label{spdenom}
\det_{1\le i,j\le n}\Bigl(x_i^{n-j+1}-\x_i^{n-j+1}\Bigr)
= \prod_{i=1}^n(x_i-\x_i)\,\prod_{1\le i<j\le n}(x_i+\x_i-x_j-\x_j).
\end{equation}
Lastly, the \emph{even orthogonal (type D) character} of the group $\OE(2n)$ at the representation indexed by $\lambda = (\lambda_1,\dots,\lambda_n)$ is given by
\begin{equation}
\label{oedef}
\oe_\lambda(X)=
\frac{{2} \displaystyle\det_{1\le i,j\le n}\Bigl(x_i^{\beta_j(\lambda)}+\x_i^{\beta_j(\lambda)}\Bigr)}
{\displaystyle (1+\delta_{\lambda_n,0})
\det_{1\le i,j\le n}\Bigl(x_i^{n-j}+\x_i^{n-j}\Bigr)},
\end{equation}
where $\delta$ is the Kronecker delta. 
{
The extra factor in the denominator arises because of the difference in the representation theory of $\OE(2n)$ and $\OO(2n)$; see~\cite[p. 411]{FulHar91} and \cite[pp. 311--312]{proctor-1994} for the precise details.
} 
The determinant here factorizes as
\begin{equation}
\label{oedenom}
\det_{1\le i,j\le n}\Bigl(x_i^{n-j}+\x_i^{n-j}\Bigr)
= {2} \prod_{1\le i<j\le n}(x_i+\x_i-x_j-\x_j).
\end{equation}

Notice that 
\[
s_\lambda(x_1,\ldots,x_n) = \sp_\lambda(x_1,\allowbreak\dots,x_n)  = \oo_\lambda(x_1,\dots,x_n) = \oe_\lambda(x_1,\dots,x_n) = 0, \quad
\text{if } n < \ell(\lambda).
\]
There is a general relation between even and odd orthogonal characters as follows. Suppose $(\lambda_1,\dots,\lambda_n)$ is an integer partition. Then
\begin{equation}
 \label{oeshifted}
\oe_{(\lambda_1+1/2,\ldots,\lambda_n+1/2)}(x_1,\ldots,x_n)=
(-1)^{\sum_{i=1}^n\lambda_i}
\prod_{i=1}^n\bigl(x_i^{1/2}+\x_i^{1/2}\bigr)
\:\oo_\lambda(-x_1,\ldots,-x_n).   
\end{equation}

A partition $\lambda$ can be represented pictorially as a \emph{Young diagram}, whose $i$'th row contains $\lambda_i$ left-justified boxes. We will use the so-called English notation where the first row is on top. For example, the Young diagram of $(4,2,2,1)$ is
\begin{equation}
\label{eg-ydiag}
\ydiagram{4,2,2,1}.
\end{equation}
For a partition, $\lambda$, the {\em conjugate partition}, denoted $\lambda'$, is the partition whose Young diagram is obtained by transposing the Young diagram of $\lambda$.
A \emph{border strip} is a connected subdiagram of the Young diagram of $\lambda$ which contains no $2 \times 2$ block of squares. Therefore, successive rows and columns of a border strip overlap by exactly one box. For example,
\[
\ydiagram{2 + 2, 1 + 2, 1 + 1, 1 + 1}
\]
is a border strip of length 6.

\begin{defn}
\label{def:tcore}
The \emph{$t$-core} of the partition $\lambda$, denoted $\core \lambda t$, is the partition obtained by successively removing border strips of size $t$ from the Young diagram of $\lambda$. 
\end{defn}

{In the example given in \eqref{eg-ydiag}, we see that $\core {(4,2,2,1)} 2 = (2,1)$ after three border strip removals.} It is a nontrivial fact that the resulting partition is independent of the order of removal; see \cite[Theorem~11.16]{loehr-2011}. For example, the only $2$-cores are staircase shapes, i.e. of the form $(k,k-1,\dots,1,0)$, $k \in \mathbb{N}$.

For a cell $c = (i,j)$ in (the Young diagram of) $\lambda$, the \emph{hook length} is given by $h_c = \lambda_i - i + \lambda'_j - j + 1$, which is the {total number of cells in its row to the right and those in its column below it including the cell itself.} 
The \emph{content} of $c$ is $j-i$. The \emph{arm} (resp. \emph{leg}) of $c$ is the rightmost (resp. bottommost) cell in its row (resp. column). For example, the hook lengths and contents of the running example are
\begin{equation}
\label{eg-hook-content}
\begin{ytableau}
7 & 5 & 2 & 1 \\
4 & 2 \\
3 & 1 \\
1
\end{ytableau}
\quad
\text{and}
\quad
\begin{ytableau}
0 & 1 & 2 & 3 \\
-1 & 0 \\
-2 & -1 \\
-3
\end{ytableau}.
\end{equation}

\begin{defn}
\label{def:tquo}
The \emph{$t$-quotient} of $\lambda$ is a $t$-tuple of partitions denoted 
$\quo \lambda t = (\lambda^{(0)},\dots, \allowbreak \lambda^{(t-1)})$ obtained using the Young diagram of $\lambda$. The $i$'th element of this tuple is obtained as follows. Take all cells $c$ whose hook length is divisible by $t$ and whose arm has content congruent to $i \pmod t$. It is a nontrivial fact that this collection of cells forms a Young subdiagram of $\lambda$. The corresponding partition is $\lambda^{(i)}$.
\end{defn}

From \eqref{eg-hook-content}, we see that $\quo {(4,2,2,1)} 2 = ((2),(1))$.
Macdonald defines the $t$-core and $t$-quotient alternately using the beta-set and we recall this construction. 
Let $\lambda$ be a partition with $\ell(\lambda) \leq m$.
For $0 \leq i \leq t-1$,
let $n_{i}(\lambda) \equiv n_{i}(\lambda,m)$ be the number of parts of $\beta(\lambda)$ congruent to $i \pmod{t}$ and  $\beta_j^{(i)}(\lambda)$, $1 \leq j \leq n_{i}(\lambda)$ be the $n_{i}(\lambda)$ parts of $\beta(\lambda)$ congruent to $i \pmod t$ in decreasing order.

\begin{prop}[{\cite[Example~I.1.8]{macdonald-2015}}]
\label{prop:mcd-t-core-quo}
Let $\lambda$ be a partition with $\ell(\lambda) \leq m$.

\begin{enumerate}
\item The $m$ numbers $tj+i$, where $0 \leq j \leq n_{i}(\lambda)$ and $0 \leq i \leq t-1$, are all distinct. Arrange them in descending order, say $\tilde{\beta}_1>\dots>\tilde{\beta}_m$. Then the $t$-core of $\lambda$ has parts $(\core \lambda t)_i=\tilde{\beta}_i-m+i$.  
Thus, $\lambda$ is a $t$-core if and only if these $m$ numbers $tj+i$, where $0 \leq j \leq n_{i}(\lambda)$ and $0 \leq i \leq t-1$ form its beta-set $\beta(\lambda)$.

\item The parts $\beta_j^{(i)}(\lambda)$ may be written in the form $t\tilde{\beta}_j^{(i)}+i$, $1 \leq j \leq n_{i}(\lambda)$,
where $\tilde{\beta}_1^{(i)}>\dots>\tilde{\beta}_{n_{i}(\lambda)}^{(i)}\geq 0$. 
Let $\lambda_j^{(i)}=\tilde{\beta}_j^{(i)}-n_{i}(\lambda)+j$, 
so that $\lambda^{(i)}=(\lambda_1^{(i)},\dots,\lambda_{n_{i}(\lambda)}^{(i)})$ is a partition. 
Then the $t$-quotient $\quo \lambda t$ of $\lambda$ is a cyclic permutation of $\lambda^{\star} = (\lambda^{(0)},\lambda^{(1)},\dots,\lambda^{(t-1)})$.
The effect of changing $m\geq \ell(\lambda)$ is to permute the $\lambda^{(j)}$ cyclically, so that $\lambda^{\star}$ should perhaps be thought of as a `necklace' of partitions.
\end{enumerate}
\end{prop}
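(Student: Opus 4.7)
The plan is to establish both parts via the $t$-abacus model of partitions. To each partition $\lambda$ with $\ell(\lambda) \leq m$, associate a configuration of $m$ beads on $t$ parallel runners indexed $0, 1, \ldots, t-1$: a bead sits at position $j \geq 0$ on runner $i$ precisely when $tj + i \in \beta(\lambda, m)$. Under this encoding, the beads on runner $i$ occupy the positions $\tilde{\beta}_j^{(i)}$ defined in the statement, and the bead count on runner $i$ is exactly $n_i(\lambda)$.

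The crucial lemma is that removing a border strip of length $t$ from $\lambda$ corresponds to sliding a single bead one position up on its runner (from $j$ to $j-1$), provided that destination is empty. This follows from the standard translation between beta-sets and Young diagrams: if $b \in \beta(\lambda)$ and $b - t \notin \beta(\lambda)$, then replacing $b$ by $b - t$ in $\beta(\lambda)$ produces the partition obtained by deleting a unique border strip of length $t$ from $\lambda$. I would prove this by unwinding the bijection between beta-entries and the row endpoints of a rim hook, checking how successive row-length adjustments in the deletion match the single change $b \mapsto b - t$.

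For part (1), iterate the sliding lemma. Since the bead count on each runner is preserved under slides, the process terminates uniquely at the configuration in which the $n_i(\lambda)$ beads on runner $i$ are packed at the lowest positions $0, 1, \ldots, n_i(\lambda) - 1$. The resulting beta-set is $\{tj + i : 0 \leq i \leq t-1,\, 0 \leq j \leq n_i(\lambda) - 1\}$, and sorting these in decreasing order as $\tilde{\beta}_1 > \cdots > \tilde{\beta}_m$ yields $(\core \lambda t)_i = \tilde{\beta}_i - m + i$. Uniqueness of the terminal configuration simultaneously shows that the $t$-core is well defined (independent of the order in which strips are removed), and $\lambda$ is itself a $t$-core precisely when no bead can slide up.

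For part (2), the bead positions on each runner already form the beta-set of a partition, recovering $\lambda^{(i)}$ via $\lambda_j^{(i)} = \tilde{\beta}_j^{(i)} - n_i(\lambda) + j$. The main obstacle is to identify these abacus-defined $\lambda^{(i)}$ with the $t$-quotient of \cref{def:tquo}: I would build a bijection between cells of $\lambda^{(i)}$ and cells $c \in \lambda$ with $t \mid h_c$ and arm-content $\equiv i \pmod{t}$, exploiting the fact that each successive bead slide on runner $i$ in the reduction $\lambda \to \core \lambda t$ removes a unique border strip whose arm has content $\equiv i \pmod{t}$. Finally, for the necklace claim, increasing $m$ by $1$ prepends a new entry $0$ to $\beta(\lambda)$ and shifts every other entry up by $1$; on the abacus this sends each bead to the runner of the next residue modulo $t$, cyclically permuting $(\lambda^{(0)}, \ldots, \lambda^{(t-1)})$.
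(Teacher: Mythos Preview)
The paper does not supply its own proof of this proposition; it is quoted verbatim as \cite[Example~I.1.8]{macdonald-2015} and used as a black box throughout. So there is nothing in the paper to compare your argument against.

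That said, your abacus-based outline is the standard route and is essentially how Macdonald himself argues (in beta-set language rather than explicit ``runner'' terminology). The sliding lemma is exactly \cite[Chapter I.1, Example 8(a)]{macdonald-2015}, which the present paper also invokes just before \eqref{no-parts-partition=core}. Your treatment of part~(1) and of the cyclic shift under $m \mapsto m+1$ is correct. The one place where your sketch is thin is the identification of the abacus-defined $\lambda^{(i)}$ with the hook-length/arm-content description of \cref{def:tquo}: the bijection you allude to is real but requires tracking which cell of $\lambda$ serves as the ``head'' of each removed rim hook and checking its arm has the claimed content; this is more bookkeeping than you indicate. For the purposes of this paper none of that matters, since only the beta-set characterisation of the quotient (your abacus $\lambda^{(i)}$) is ever used downstream.
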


\begin{rem}
\label{rem:necklace}
We note that Macdonald's definition of the $t$-quotient is not identical to that of \cref{def:tquo}, but is equal up to a cyclic shift. In particular, if
$\quo \lambda t = (\lambda^{(0)},\dots,\lambda^{(t-1)})$ and $m$ increases by $1$ in \cref{prop:mcd-t-core-quo}, the new $t$-quotient will be $(\lambda^{(t-1)},\lambda^{(0)},\dots,\lambda^{(t-2)})$. 
\end{rem}

For a partition of length at most $tn$, 
let $\sigma_{\lambda} \in S_{tn}$ be the permutation that rearranges the parts of $\beta(\lambda)$ such that
\begin{equation}
\label{sigma-perm}
 \beta_{\sigma_{\lambda}(j)}(\lambda) \equiv q \pmod t, \quad  \sum_{i=0}^{q-1} n_{i}(\lambda)+1 \leq j \leq \sum_{i=0}^{q} n_{i}(\lambda),   
\end{equation}
arranged in decreasing order for each $q \in \{0,1,\dots,t-1\}$.
For the empty partition,  $\beta(\emptyset,tn)=(tn-1,tn-2,\dots,0)$ with $n_{q}(\emptyset,tn)=n$, $0 \leq q \leq t-1$ and 

\begin{equation}
\label{sigma0}
\sigma_{\emptyset}={\displaystyle (t, \dots , nt, t-1, \dots ,nt-1, \dots,1, \dots ,(n-1)t+1}),
\end{equation}
in one line notation with $\sgn(\sigma_{\emptyset})=(-1)^{\frac{t(t-1)}{2}\frac{n(n+1)}{2}}$.

The {\em (Frobenius) rank} of a partition $\lambda$, denoted $\rk(\lambda)$, is the largest integer $k$ such that $\lambda_k \geq k$. 
The {\em Frobenius coordinates} of $\lambda$ are a pair of distinct partitions, denoted $(\alpha | \beta)$, of length $\rk(\lambda)$ given by $\alpha_i = \lambda_i - i$ and
$\beta_j = \lambda'_j - j$. 
For example, the Frobenius coordinates of our running example {$(4,2,2,1)$} in \eqref{eg-ydiag} are $(3,0|3,1)$.

We will consider classical characters evaluated at elements twisted by all the $t$'th roots of unity. 
The first result in this direction is due to D. Littlewood and independently D. Prasad for $\GL_{tn}$.
We will denote our indeterminates by $X,\omega X,\omega^2X, \dots ,\omega^{t-1}X$, where we recall that $X = (x_1,\dots,x_n)$ and $\omega$ is a primitive $t$'th root of unity. 

\begin{thm}[{\cite[Equation (7.3;3)]{littlewood-1950}, \cite[Theorem 2]{prasad-2016}}]
\label{thm:schur-fac}
Let $\lambda$ be a partition of length at most $tn$ indexing an irreducible representation of $\GL_{tn}$ and $\quo \lambda t  = (\lambda^{(0)},\dots,\lambda^{(t-1)})$.
Then the $\GL_{tn}$-character $s_{\lambda}(X,\omega X, \dots ,\omega^{t-1}X)$ is as follows.
\begin{enumerate}
    \item  If $\core \lambda t$ is not {empty}, then
\begin{equation}
    \label{th:1.1}
    s_{\lambda}(X,\omega X, \dots ,\omega^{t-1}X) = 0.
\end{equation}
\item If $\core \lambda t$ is {empty}, then 
\begin{equation}
  \label{th:1.2} s_{\lambda}(X,\omega X, \dots ,\omega^{t-1}X) \\ =  (-1)^{\frac{t(t-1)}{2}\frac{n(n+1)}{2}}\sgn(\sigma_{\lambda})\prod_{i=0}^{t-1} s_{\lambda^{(i)}}(X^t).
  \end{equation}
\end{enumerate}
\end{thm}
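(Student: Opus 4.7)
The plan is to compute the numerator of $s_\lambda(X,\omega X,\dots,\omega^{t-1}X)$ directly from \eqref{gldef} in $tn$ variables by sorting its columns according to the residue modulo $t$ of $\beta_j(\lambda)$, and then compare to the denominator, which is the same expression at $\lambda=\emptyset$. Index rows of the numerator matrix $N_\lambda:=\det\bigl((\omega^k x_i)^{\beta_j(\lambda)}\bigr)$ by pairs $(k,i)\in\{0,\dots,t-1\}\times\{1,\dots,n\}$ and columns by $j\in\{1,\dots,tn\}$; each entry factors as $\omega^{kq_j}\,x_i^{\beta_j(\lambda)}$, where $q_j\equiv\beta_j(\lambda)\pmod t$. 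Applying the column permutation $\sigma_\lambda$ from \eqref{sigma-perm} groups the columns into $t$ consecutive blocks, the $q$-th of which consists of the $n_q(\lambda)$ columns on which $\beta_j(\lambda)=t\tilde\beta^{(q)}_r+q$, with the $\tilde\beta^{(q)}_r$ as in \cref{prop:mcd-t-core-quo}(2); this introduces a sign $\sgn(\sigma_\lambda)$.

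Within the $q$-th block, the entry in row $(k,i)$ equals $\omega^{kq}\cdot x_i^{q}(x_i^t)^{\tilde\beta^{(q)}_r}$, so the permuted matrix factors as $M=E\cdot D$ where $E=A\otimes I_n$ with $A=(\omega^{kq})_{0\le k,q\le t-1}$ the Vandermonde matrix in the $t$-th roots of unity, and $D$ is block diagonal in the index $q$, with $q$-th diagonal block equal to the $n\times n_q(\lambda)$ matrix $B^{(q)}$ of entries $x_i^q(x_i^t)^{\tilde\beta^{(q)}_r}$. Moreover, by \cref{prop:mcd-t-core-quo}(1), $\core\lambda t=\emptyset$ if and only if $n_q(\lambda)=n$ for every $q$.

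For part~(1), when the core is nonempty there is some $q^\ast$ with $n_{q^\ast}(\lambda)>n$, and the corresponding $n_{q^\ast}$ columns of $D$ are supported in an $n$-dimensional coordinate subspace, hence linearly dependent; this forces $\det(D)=0$ and so $N_\lambda=0$. For part~(2), all $n_q=n$, the blocks $B^{(q)}$ are square, and one can pull $x_i^q$ out of each row of $B^{(q)}$, yielding a factor $(\prod_i x_i)^q$ together with the determinant $\det_{i,r}\bigl((x_i^t)^{\tilde\beta^{(q)}_r}\bigr)$; under $n_q=n$ the latter equals $s_{\lambda^{(q)}}(X^t)\prod_{1\le i<j\le n}(x_i^t-x_j^t)$ by the Weyl formula in the variables $X^t$, since then $\tilde\beta^{(q)}_r=\beta_r(\lambda^{(q)},n)$.

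Combining these ingredients with $\det(E)=\det(A)^n$ gives a closed form for $N_\lambda$ in which the only $\lambda$-dependent factors are $\sgn(\sigma_\lambda)$ and $\prod_q s_{\lambda^{(q)}}(X^t)$. Since the denominator in \eqref{gldef} equals $N_\emptyset$ and the very same computation applied to $\lambda=\emptyset$ gives an identical prefactor except that $\sgn(\sigma_\lambda)$ becomes $\sgn(\sigma_\emptyset)$ and the Schur factors collapse to $1$, taking the ratio yields $\sgn(\sigma_\lambda)\sgn(\sigma_\emptyset)\prod_q s_{\lambda^{(q)}}(X^t)$, and substituting $\sgn(\sigma_\emptyset)=(-1)^{t(t-1)n(n+1)/4}$ from \eqref{sigma0} produces \eqref{th:1.2}. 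The main obstacle is the combinatorial bookkeeping: checking that the column permutation which sorts $\beta(\lambda)$ by residue class (and then decreasingly within each class) really is $\sigma_\lambda$, and that the resulting sequences $\tilde\beta^{(q)}$ coincide with the beta sets $\beta(\lambda^{(q)},n)$ of the $t$-quotient—both facts being exactly the content of \cref{prop:mcd-t-core-quo}.
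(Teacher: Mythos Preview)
Your proposal is correct and follows essentially the same approach as the paper: permute columns by $\sigma_\lambda$, factor the resulting matrix as $(\omega^{kq})\otimes I_n$ times a block-diagonal matrix in the $A^\lambda_q$, use \cref{lem:matrix}/\cref{cor:ary} (which you recover directly from \cref{prop:mcd-t-core-quo}) to handle the nonempty-core case, and take the ratio with the $\lambda=\emptyset$ computation to cancel the $\lambda$-independent factors. The only cosmetic difference is that you pull the monomials $x_i^{\,q}$ out of each block explicitly, whereas the paper leaves them in $A^\lambda_q$ and cancels them via the ratio $\det A^\lambda_q/\det A_q$ in \eqref{snew}; the two computations are equivalent.
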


In other words, the nonzero $\GL_{tn}$-character is a product of $t$ $\GL_{n}$ characters. We will give a self-contained proof of this result in \cref{sec:schur}. 
We note that \cref{thm:schur-fac} for $X=(1)$ is due to Macdonald~\cite[Chapter I.3, Example  17(a)]{macdonald-2015}, where the Schur polynomial on the right hand side of \eqref{th:1.2} is 1 for each $i \in [0,t-1]$.

\begin{eg} For $t=2$, \cref{thm:schur-fac} says that the character of the group $\GL_{2}$ (i.e., $n=1$) of the representation indexed by the partition $(a,b)$, $a \geq b \geq 0$, evaluated at $(x,-x)$ is nonzero if and only if $a$ and $b$ have the same parity. If $a$ and $b$ are both odd, then
\[
s_{(a,b)}(x,-x)=-s_{(\frac{a+1}{2})}(x^2) s_{(\frac{b-1}{2})}(x^2),
\]
and if $a$ and $b$ are both even, then
\[
s_{(a,b)}(x,-x)=s_{(\frac{b}{2})}(x^2) s_{(\frac{a}{2})}(x^2).
\]
\end{eg}

{We now consider Schur factorizations where there is an extra variable set equal to $1$. We were informed later that this result was obtained earlier by Littlewood using very different notation.}
To state our result, we note that the only $t$-cores of length at most one are the partitions $(c)$ for $0 \leq c \leq t-1$. 

\begin{thm}[{Littlewood~\cite[Chapter VII, Section IX]{littlewood-1950}}]
\label{thm:schur-1}
Let $\lambda$ be a partition of length at most $tn+1$ indexing an irreducible representation of $\GL_{tn+1}$ and $\quo \lambda t  = (\lambda^{(0)},\dots,\lambda^{(t-1)})$.
Then the $\GL_{tn+1}$-character $s_{\lambda}(X,\omega X, \dots, \allowbreak \omega^{t-1}X,x)$ is as follows.

\begin{enumerate}

\item  If $\ell(\core \lambda t) > 1$, then
\begin{equation}
    \label{th:7.1}
    s_{\lambda}(X,\omega X, \dots ,\omega^{t-1}X,x) = 0.
\end{equation}

\item If $\core \lambda t = (c)$, for some $0 \leq c \leq t-1$, then 
\begin{equation}
  \label{th:7.2} s_{\lambda}(X,\omega X, \dots ,\omega^{t-1}X,x) \\ = 
  (-1)^{\frac{t(t-1)}{2}\frac{n(n+1)}{2}-cn}
  \sgn(\sigma^c_{\lambda}) \, x^c \, s_{\lambda^{(c)}}(X^t,x^t) \prod_{\substack{i=0\\i \neq c}}^{t-1} s_{\lambda^{(i)}}(X^t).
  \end{equation}
\end{enumerate}
\end{thm}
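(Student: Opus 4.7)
The plan is to adapt the proof of Prasad's \cref{thm:schur-fac}, accommodating the extra variable fixed at $1$, which enlarges the bialternants of \eqref{gldef} from $tn\times tn$ to $(tn+1)\times(tn+1)$ and permits one column block in the mod-$t$ decomposition of the beta-set to contain $n+1$ entries rather than $n$. Concretely, I would write $s_\lambda(X,\omega X,\dots,\omega^{t-1}X,1)=\det(N)/\det(D)$ via \eqref{gldef}, labelling rows of $N$ by the $tn+1$ variables $\omega^k x_i$ ($0\le k\le t-1$, $1\le i\le n$) together with $1$, and columns by $\beta(\lambda,tn+1)$. After grouping columns by residue modulo $t$ and writing $n_q=n_q(\lambda,tn+1)$ so that $\sum_q n_q=tn+1$, the identities $(\omega^k x_i)^{tm+q}=\omega^{kq}x_i^q(x_i^t)^m$ and $1^{tm+q}=1$ give block $(k,q)$ of $N$ the entries $\omega^{kq}x_i^q(x_i^t)^{\tilde\beta_s^{(q)}}$, while the bottom row is the all-ones row. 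This last row is exactly what an auxiliary variable $x_{n+1}=1$ would contribute to the $k=0$ row-group, which is the key to fitting the argument inside Prasad's framework.

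For part (1), expand $\det N$ along its all-ones bottom row. Each cofactor is a $tn\times tn$ determinant of exactly the shape appearing in \cref{thm:schur-fac}, applied to the partition whose beta-set is $\beta(\lambda,tn+1)$ with one entry deleted. By \cref{thm:schur-fac} that cofactor vanishes unless the deletion leaves $n$ parts of the beta-set in every residue class mod $t$. Summing the cofactors therefore forces $(n_0,\dots,n_{t-1})$ to be a permutation of $(n,\dots,n,n+1)$, with the unique $n+1$ at some position $c$; by \cref{prop:mcd-t-core-quo} this residue profile is equivalent to $\core{\lambda}{t}=(c)$. Hence $\det N=0$ whenever $\ell(\core{\lambda}{t})>1$, giving part (1).

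For part (2) with $\core{\lambda}{t}=(c)$, only the $n+1$ cofactors indexed by columns of residue $c$ contribute, and their signed combination is precisely the Laplace expansion of $\det\tilde V_c$ along the row coming from $x_{n+1}=1$, where $V_q=[(x_i^t)^{\tilde\beta_s^{(q)}}]_{1\le i,s\le n}$ is the Vandermonde-like numerator of $s_{\lambda^{(q)}}(X^t)$ from \eqref{gldef} for $q\ne c$ and $\tilde V_c$ is the corresponding $(n+1)\times(n+1)$ numerator of $s_{\lambda^{(c)}}(X^t,1)$, obtained by adjoining the extra row $x_{n+1}^t=1$. Applying the same Kronecker-type factorization as in Prasad's proof, in which the $\omega^{kq}$ dependence separates into a twist matrix and the remaining $x_i^q$, $(x_i^t)^{\tilde\beta_s^{(q)}}$ dependence assembles into the block-diagonal factor $\operatorname{diag}(V_0,\dots,V_{c-1},\tilde V_c,V_{c+1},\dots,V_{t-1})$, yields
\[
\det N \;=\; \varepsilon\cdot T\cdot \det\tilde V_c\prod_{q\ne c}\det V_q
\]
for an explicit twist factor $T$ and sign $\varepsilon$. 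Performing the identical computation on $\det D$ (the $\lambda=\emptyset$ case, where $c=0$ and each $\lambda^{(q)}=\emptyset$) produces the same twist factor, which cancels in the ratio; reapplying \eqref{gldef} block-by-block converts each surviving piece into the appropriate Schur polynomial in $X^t$ (with the extra argument $1$ in the $c$-th factor), yielding the claimed product.

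The main obstacle is sign bookkeeping. Reordering rows and columns into residue-sorted blocks contributes $\sgn(\sigma^c_\lambda)$, the Kronecker piece contributes Prasad's $(-1)^{\frac{t(t-1)}{2}\frac{n(n+1)}{2}}$, and because the enlarged $(n+1)$-block sits at residue $0$ in $\det D$ but at residue $c$ in $\det N$, reconciling the two block-diagonal structures contributes the extra factor $(-1)^{-cn}$. Combined, these produce the exponent $\frac{t(t-1)}{2}\frac{n(n+1)}{2}-cn$ in the statement. Unlike Prasad's clean tensor $\Omega\otimes I_n$, the twist here is asymmetric at block $c$, so this tracking has to be done by a direct computation rather than the transparent cancellation available in the original proof.
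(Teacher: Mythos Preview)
Your approach is correct and genuinely different from the paper's. The paper does \emph{not} Laplace-expand along the all-ones row and invoke \cref{thm:schur-fac} on each $tn\times tn$ cofactor. Instead, after permuting columns by $\sigma^m_\lambda$ (for some residue $m$ with $n_m(\lambda)\ge n+1$), it subtracts the column corresponding to the smallest $\beta$-part of residue $m$ from every other column, which zeroes out the last row except in one position. This reduces $\det N$ to a \emph{single} $tn\times tn$ determinant (equation \eqref{1sdet}), and then blockwise \emph{row} operations produce a block-triangular structure, whence \cref{lem:matrix} yields both the vanishing criterion and the factorization. The extra column operations are then undone via the small identities \eqref{col-1} and \eqref{6srs} to recover $s_{\lambda^{(c)}}(X^t,1)$.

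Your route---expand first, recognize each surviving cofactor as the numerator from Prasad's setting, then reassemble the $n+1$ cofactors in residue $c$ as the last-row Laplace expansion of the $(n+1)\times(n+1)$ numerator $\tilde V_c$---is arguably more conceptual, since it literally reduces the $tn+1$-variable statement to the $tn$-variable one rather than rerunning the linear algebra. The price is exactly what you flag: because the $c$-block has size $n+1$ while the others have size $n$, there is no single tensor factor $\Omega\otimes I_n$, and the signs coming from (i) the Laplace expansion in the big matrix, (ii) the Kronecker factorization of each cofactor, and (iii) the reassembly into the Laplace expansion of $\tilde V_c$ all have to be reconciled by hand. The paper's column-subtraction trick sidesteps this by collapsing everything to one term, at the cost of the auxiliary identities \eqref{col-1}--\eqref{6srs}. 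Both approaches arrive at the same sign $\frac{t(t-1)}{2}\frac{n(n+1)}{2}-cn$, and your explanation of the $-cn$ as the offset between the $(n+1)$-block sitting at residue $0$ in the denominator versus residue $c$ in the numerator is correct.
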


{This result for $t =2$, formulated slightly differently, is present in Littlewood~\cite[Chapter VII, Section VIII]{littlewood-1950} for $x=1$ and in ~\cite[Theorem 4.5]{LuPra21}.}

\begin{eg} 
For $t=2$, \cref{thm:schur-1} says that the character of the group $\GL_{3}$ (i.e., $n=1$) of the representation indexed by the partition $(a,b,c)$, $a \geq b \geq c \geq 0$, evaluated at $(x_1,-x_1,x)$ is non-zero if and only if $a$ and $b$ have the same parity or $a$ and $c$ have the opposite parity. If $\core {a,b,c} 2$ is empty, then
\[
s_{(a,b,c)}(x_1,-x_1,x)=
\begin{cases}
-s_{(\frac{a}{2},\frac{b+1}{2})}(x_1^2,x^2) s_{(\frac{c-1}{2})}(x_1^2) & \text{$a$  even, $b$ and $c$ odd,}\\
-s_{(\frac{a}{2},\frac{c}{2})}(x_1^2,x^2) s_{(\frac{b}{2})}(x_1^2) & a,b,c \text{ even,}\\
s_{(\frac{b-1}{2},\frac{c}{2})}(x_1^2,x^2) s_{(\frac{a+1}{2})}(x_1^2) & \text{$a$ and $b$ odd, $c$ even,}
\end{cases}
\]
and if $\core {a,b,c} 2 = (1)$, then
\[
s_{(a,b,c)}(x_1,-x_1,x)=
\begin{cases}
x \, s_{(\frac{a-1}{2},\frac{b}{2})}(x_1^2,x^2) s_{(\frac{c}{2})}(x_1^2) & \text{$a$ odd, $b$ and $c$ even,}\\
x \, s_{(\frac{a-1}{2},\frac{c-1}{2})}(x_1^2,x^2) s_{(\frac{b+1}{2})}(x_1^2) & a,b,c \text{ odd,}\\
-x\,  s_{(\frac{b-2}{2},\frac{c-1}{2})}(x_1^2,x^2) s_{(\frac{a+2}{2})}(x_1^2) & \text{ $a$ and $b$ even, $c$ odd.}
\end{cases}
\]
\end{eg}

We now generalize \cref{thm:schur-fac} to other classical characters. We first need some definitions.

\begin{defn}
\label{def:z-asym}
Let $z$ be a nonnegative integer. We say that a partition $\lambda$ is \emph{$z$-asymmetric} if $\lambda=(\alpha|\, \alpha+z)$, in Frobenius coordinates for some strict partition $\alpha$. More precisely, 
$\lambda = (\alpha|\beta)$ where $\beta_i = \alpha_i + z$ for $1 \leq i \leq \rk(\lambda)$.
\end{defn}

\begin{defn}
\label{def:symp-core}
A $1$-asymmetric partition is said to be {\em symplectic}\footnote{While this terminology also seems to be have been used for partitions whose odd parts have even multiplicity~\cite{jiang-liu-2015}, it does not seem widespread.}.
In addition, if a symplectic partition is also a $t$-core, we call it a {\em symplectic $t$-core}.
\end{defn}

Note that the empty partition is vacuously symplectic. For example, the only symplectic partitions of $6$ are $(3,1,1,1)$ and $(2,2,2)$, and the first few symplectic $3$-cores are $(1,1)$, $(2,1,1), (4,2,2,1,1)$ and $(5,3,2,2, \allowbreak 1,1)$.  

For the symplectic case, we take $G = \Sp_{2tn}$, the symplectic group of $(2tn) \times (2tn)$ matrices.  
To state our results, it will be convenient to define, for $\lambda = (\lambda_1,\dots,\lambda_k)$, the reverse of $\lambda$ as $\rev(\lambda) = (\lambda_k,\dots,\lambda_1)$. Further, if $\mu = (\mu_1,\dots,\mu_j)$ is another partition such that $\mu_1 \leq \lambda_k$, then we write the concatenated partition $(\lambda,\mu) = (\lambda_1,\dots,\lambda_k,\mu_1,\dots,\mu_j)$.

\begin{thm}
\label{thm:sympfact} 
Let $\lambda$ be a partition of length at most $tn$ indexing an irreducible representation of $\Sp_{2tn}$ and $\quo \lambda t  = (\lambda^{(0)},\dots,\lambda^{(t-1)})$.
Then the $\Sp_{2tn}$-character $\sp_{\lambda}(X,\omega X, \allowbreak \dots ,\omega^{t-1}X)$ is given as follows.

\begin{enumerate}

\item  If $\core \lambda t$ is not a symplectic $t$-core, then
\begin{equation}
\label{symp-fact-1}
    \sp_{\lambda}(X,\omega X, \dots ,\omega^{t-1}X) = 0.
\end{equation}

\item If $\core \lambda t$ is a symplectic $t$-core with rank r, then 
\begin{equation}
    \begin{split}
      \label{symp-fact-2}
\sp_{\lambda}(X,\omega X, \dots ,\omega^{t-1}X) =  (-1)^{\epsilon} \sgn (\sigma_{\lambda})\, \sp_{\lambda^{(t-1)}}(X^t) 
 & \prod_{i=0}^{\floor{\frac{t-3}{2}}} s_{\mu^{(1)}_i}(X^t,{\X}^t) \\ & \times 
\begin{cases}
\oo_{\lambda^{\left(\frac{t}{2}-1\right)}}(X^t) & t \text{ even} ,\\
1 & t \text{ odd},
\end{cases}  
    \end{split}
\end{equation}
where 
\[
\epsilon= \ds -\sum_{i=\floor{\frac{t}{2}}}^{t-2} \binom{n_{i}(\lambda)+1}2 + \begin{cases}
\frac{n(n+1)}{2}+nr & t \text{ even},
\\0  & t \text{ odd},
\end{cases}
\]
and $\ds \mu^{(1)}_i=  \lambda^{(t-2-i)}_1 +
\left(\lambda^{(i)}, 0,\dots,0, -\rev(\lambda^{(t-2-i)})\right)$ has $2n$ parts for 
$0 \leq i \leq \floor{\frac{t-3}{2}}$.
\end{enumerate}
\end{thm}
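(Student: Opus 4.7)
The plan is to apply the Weyl character formula \eqref{spdef}, writing $\sp_\lambda(X,\omega X,\dots,\omega^{t-1}X)$ as a ratio of two $(tn)\times(tn)$ determinants, and then to perform a discrete-Fourier-type transformation on the rows. For each fixed $i\in\{1,\dots,n\}$, I would replace the $t$ rows corresponding to the variables $\omega^k x_i$, $k=0,\dots,t-1$, by the $t$ linear combinations weighted by $\omega^{-kq}$, indexed by $q=0,\dots,t-1$. Using the orthogonality $\sum_{k=0}^{t-1}\omega^{k\ell}=t$ when $t\mid\ell$ and $0$ otherwise, each new entry in row $(i,q)$, column $j$, collapses to $t\,x_i^{\beta_j(\lambda)+1}$ if $\beta_j(\lambda)+1\equiv q\pmod t$, to $-t\,\x_i^{\beta_j(\lambda)+1}$ if $\beta_j(\lambda)+1\equiv -q\pmod t$, and to $0$ otherwise. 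The same row operation is applied to the Weyl denominator \eqref{spdenom}, and the corresponding Jacobian (a power of the $t\times t$ Vandermonde of $\omega^0,\dots,\omega^{t-1}$) is independent of $\lambda$ and cancels in the ratio.

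After reordering rows by $q$ and columns by the residue of $\beta_j(\lambda)+1$ modulo $t$, the transformed numerator becomes block-sparse: only the ``pair-blocks'' indexed by $\{q,t-q\}$ have nonzero entries. For the overall determinant to be nonzero, each pair-block must be square; this forces $n_{t-1}(\lambda)=n$ (from $q=0$), $n_{t/2-1}(\lambda)=n$ when $t$ is even (from $q=t/2$), and $n_{q-1}(\lambda)+n_{t-q-1}(\lambda)=2n$ for the remaining pairs. Equivalently, the multiplicity tuple $(n_0(\lambda),\dots,n_{t-1}(\lambda))$ must be invariant under the involution $r\mapsto t-r-2\pmod t$. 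Using \cref{prop:mcd-t-core-quo} to recover $\core\lambda t$ from this tuple and comparing with the Frobenius parametrization in \cref{def:z-asym} for $z=1$, I would verify that this symmetry is equivalent to $\core\lambda t$ being a symplectic $t$-core, thereby establishing the vanishing statement \eqref{symp-fact-1}.

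Assuming non-vanishing, I would next identify each pair-block with an explicit character. The self-paired block at $q=0$, whose columns are indexed by $\beta_j\equiv t-1\pmod t$, becomes the Weyl numerator of $\sp_{\lambda^{(t-1)}}(X^t)$ after writing $\beta_j=t\tilde\beta_j+(t-1)$ and invoking \cref{prop:mcd-t-core-quo}(2). When $t$ is even, the self-paired block at $q=t/2$ reproduces the half-integer form of \eqref{oodef} and yields $\oo_{\lambda^{(t/2-1)}}(X^t)$. For each pair $\{q,t-q\}$ with $q=i+1\notin\{0,t/2\}$, the $2n\times 2n$ block combines exponents from residue classes $i$ and $t-i-2$; after multiplying one column block by $-1$ to unify signs, negating the $(t-q)$-rows, converting the resulting negative Laurent exponents to nonnegative ones via a uniform shift by $\lambda^{(t-2-i)}_1$, and reinterpreting the $\x_i$-rows as evaluation at $\X^t$, the block becomes the Schur numerator $s_{\mu^{(1)}_i}(X^t,\X^t)$ for the partition $\mu^{(1)}_i$ defined in the theorem. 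A parallel reduction of the Weyl denominator gives the denominators of the claimed factors, so dividing yields the product in \eqref{symp-fact-2}.

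The sign $(-1)^\epsilon\sgn(\sigma_\lambda)$ accumulates from four sources: (a) the column-grouping permutation $\sigma_\lambda$ from \eqref{sigma-perm}; (b) the denominator's row permutation, whose sign equals $\sgn(\sigma_\emptyset)$ from \eqref{sigma0}; (c) the column sign-flips in each pair-block, which produce the $\binom{n_i(\lambda)+1}{2}$ contributions to $\epsilon$; and (d) the row exchanges and rearrangements needed to present each Schur sub-block in standard form. The main technical obstacle is the combined step (c)--(d): identifying the partition $\mu^{(1)}_i$ requires carefully reversing and negating part of the column exponent list and then shifting by $\lambda^{(t-2-i)}_1$, and each such manipulation produces its own sign contribution; the delicate task is to organize these manipulations so that the accumulated sign equals precisely the $\epsilon$ stated in the theorem.
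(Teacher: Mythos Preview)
Your proposal is correct and follows essentially the same strategy as the paper: both start from the determinantal formula \eqref{spdef}, perform a linear transformation on the rows indexed by the $t$ variables $\omega^k x_i$ that exploits the orthogonality of $t$'th roots of unity, obtain a block-sparse matrix whose nonvanishing forces the conditions of \cref{cor:symp-parts}, and then identify the diagonal blocks with the characters $\sp_{\lambda^{(t-1)}}$, $\oo_{\lambda^{(t/2-1)}}$, and $s_{\mu^{(1)}_i}$ via \cref{prop:mcd-t-core-quo}(2) and (essentially) \cref{lem:s-new}.

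The only presentational difference is in the mechanics of the row reduction. You propose a full discrete Fourier transform on each $t$-tuple of rows, which block-diagonalizes in one stroke. The paper instead first permutes columns by $\sigma_\lambda$, then applies the ad hoc operations $R_1\to\sum_p R_p$ and $R_p\to R_p-\tfrac{1}{t}R_1$ to peel off the $q=t-1$ factor, and finally invokes the abstract block-determinant identity \cref{lem:det-blockmatrix} (specialized as \cref{cor:det-Pi_1}) to factor the remaining $(t-1)n\times(t-1)n$ piece. Your DFT is arguably the cleaner way to see why the block structure emerges; the paper's packaging into \cref{lem:det-blockmatrix} has the advantage that the same lemma is reused verbatim for the even and odd orthogonal cases. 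The sign bookkeeping you flag as delicate is genuinely so, and the paper handles it by tracking the signs produced by $\sigma_\lambda$, by \cref{lem:s-new}, and by \cref{cor:det-Pi_1} separately, then simplifying the combined exponent at the very end---exactly the four sources (a)--(d) you list.
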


Again, nonzero $\Sp_{2tn}$ characters are a product of characters, but this time there are $\floor{(t-1)/2}$ $\GL_{2n}$ characters, one $\Sp_{2n}$ character and, if $t$ is even, one additional $\OO_{2n+1}$ character.
As mentioned above, the only $2$-cores are self-conjugate. Therefore, this character when $t=2$ is nonzero if and only if $\core \lambda 2 = \emptyset$.

\begin{eg}
For $t = 2$, \cref{thm:sympfact} says that the character of the group $\Sp(4)$ $(n=1)$ of the representation indexed by the partition $(a,b)$, $a \geq b \geq 0$, evaluated at  $(x, -x)$ is nonzero if and only if $a$ and $b$ have the same parity. 
If $a$ and $b$ are both odd, then 
\[
\sp_{(a,b)}(x, -x) = - \sp_{(\frac{b-1}{2})}(x^2) \oo_{(\frac{a+1}{2})}(x^2),
\]
and if $a$ and $b$ are both even, then 
\[
\sp_{(a,b)}(x, -x) = \sp_{(\frac{a}{2})}(x^2) \oo_{(\frac{b}{2})}(x^2).
\]
Notice that all the characters on the right-hand side are for the groups $\Sp(2)$ and $\OO(3)$, and in both cases, the partitions indexing them are the $2$-quotients and of length 1.
\end{eg}

We also give a concrete example.

\begin{eg}
\label{eg:t=3}
Let $n=2, t=3$ and consider the partition $\lambda = (3,2,1,1,1)$ so that $\beta (\lambda) = (8,6,4,3,2,0)$.
Hence, $n_{0}(\lambda,6) = 3, n_{1}(\lambda,6) = 1$ and $n_{2}(\lambda,6) = 2$.
Hence, it has $3$-core equal to $(1,1)$, and its symplectic character is nonzero.
With $X = (x_1,x_2)$, $\sp_{\lambda}(X,\omega_3 X,\omega_3^2 X)$ is given by
\[
\left(\frac{(x_1+x_2) (x_1 x_2+1) \left(x_1^2-x_2 x_1+x_2^2\right) \left(x_1^2 x_2^2-x_1 x_2+1\right)}{x_1^3 x_2^3} \right)^2.
\]
Since $\quo \lambda 3 = (\emptyset, (1), (1))$, $\mu^{(1)}_0 = 0 + (1,0,0,0) = (1)$ and
we need to calculate $\sp_{(1)}(X^3)$ and $s_{(1)}(X^3, \X^3)$. {These are the characters of $\Sp(4)$, $\OO(5)$
respectively, corresponding to the partition $(1, 0)$}. It turns out that both are equal to
\[
\frac{(x_1+x_2) (x_1 x_2+1) \left(x_1^2-x_2 x_1+x_2^2\right) \left(x_1^2 x_2^2-x_1 x_2+1\right)}{x_1^3 x_2^3},
\]
verifying \cref{thm:sympfact}.
\end{eg}

\begin{defn}
\label{def:orth-core}
A $(-1)$-asymmetric partition is said to be {\em orthogonal}.
In addition, if an orthogonal partition is also a $t$-core, we call it an {\em orthogonal $t$-core}.
\end{defn}

Our notion of an orthogonal partition is {the} same as Macdonald's \emph{double of $\alpha$}~\cite[p. 14]{macdonald-2015}, and Garvan--Kim--Stanton's \emph{doubled partition of $\alpha$}, denoted $\alpha\alpha$~\cite[Sec. 8]{garvan-kim-stanton-1990}. The first few orthogonal $3$-cores are $(2)$, $(3,1), (5,3,1,1)$ and $(6,4,2,\allowbreak 1,1)$, which are precisely the conjugates of the symplectic $3$-cores listed earlier. 
Then our result for factorization of even orthogonal characters is as follows.

For the even orthogonal case, we take $G = \OE_{2tn}$, the orthogonal group of $(2tn) \times (2tn)$ square matrices.

\begin{thm}
\label{thm:eorthfact}
Let $\lambda$ be a partition of length at most $tn$ indexing an irreducible representation of $\OE_{2tn}$ and $\quo \lambda t  = (\lambda^{(0)},\dots,\lambda^{(t-1)})$. 
Then the $\OE_{2tn}$ character $\oe_{\lambda}(X,\omega X,\allowbreak \dots ,\omega^{t-1}X)$ 
is given as follows. 

\begin{enumerate}
    \item  If $\core \lambda t$ is not an orthogonal $t$-core, then
\begin{equation}
    \label{th:5.1}
    \oe_{\lambda}(X,\omega X, \dots ,\omega^{t-1}X) = 0.
\end{equation}
\item If $\core \lambda t$ is an orthogonal $t$-core with rank r, then
\begin{equation}
    \begin{split}
     \label{th:5.2}  \oe_{\lambda}(X,\omega X, \dots ,\omega^{t-1}X)  =  (-1)^{\epsilon}\sgn
(\sigma_{\lambda}) \,
 & \oe_{\lambda^{(0)}}(X^t)   \prod_{i=1}^{\floor{\frac{t-1}{2}}} s_{\mu^{(2)}_i}(X^t,{\X}^t) \\&  \times 
\begin{cases}
(-1)^{\sum_{i=1}^n \lambda_i^{(t/2)}}\displaystyle {\oo_{\lambda^{(t/2)}}(-X^t)} & t \text{ even},\\
1 & t \text{ odd},
\end{cases}   
    \end{split}
\end{equation}
where 
\[
\epsilon=-\sum_{i=\floor{\frac{t+2}{2}}}^{t-1} \binom{n_{i}(\lambda)}{2} + \begin{cases}
\frac{n(n+t-1)}{2}+nr & t \text{ even}, \\ 
\frac{(t-1)n}{2} & t \text{ odd},
\end{cases}
\]
and $\ds \mu^{(2)}_i=  \lambda^{(t-i)}_1 + \left(\lambda^{(i)}, 0,\dots,0, -\rev(\lambda^{(t-i)})\right)$ has $2n$ parts for 
$0 \leq i \leq \floor{\frac{t-1}{2}}$. 
\end{enumerate}
\end{thm}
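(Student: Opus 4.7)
The plan is to mimic the proof of \cref{thm:sympfact}, modifying for the $+$ signs and the absence of a shift in the exponents of the Weyl character formula \eqref{oedef}. Writing
\[
\oe_\lambda(X,\omega X,\dots,\omega^{t-1}X) = \frac{N_\lambda}{(1+\delta_{\lambda_n,0})\, D},
\]
the numerator
\[
N_\lambda = \det\bigl((\omega^k x_i)^{\beta_j(\lambda)} + (\omega^k x_i)^{-\beta_j(\lambda)}\bigr)
\]
is a $tn\times tn$ determinant with rows indexed by pairs $(i,k)$, $1\le i\le n$, $0\le k\le t-1$, and columns by $1\le j\le tn$, while $D$ is \eqref{oedenom} evaluated at the $tn$ variables $(X,\omega X,\dots,\omega^{t-1}X)$. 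The denominator $D$ factors in closed form by the same product computation carried out for the symplectic case, and this accounts for the bulk of the prefactors appearing in \eqref{th:5.2}.

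The heart of the proof is the analysis of $N_\lambda$. First permute the columns via $\sigma_\lambda$ so that the exponents $\beta_j(\lambda)$ are grouped by residue class modulo $t$ and arranged in descending order within each class; this contributes $\sgn(\sigma_\lambda)$. Next, for each fixed $i$, replace the $t$ rows $(i,0),\dots,(i,t-1)$ by their discrete Fourier transform with weights $\omega^{-kr}$, $r=0,\dots,t-1$. Using the orthogonality $\sum_{k=0}^{t-1}\omega^{k(q-r)}=t$ if $q\equiv r\pmod t$ and $0$ otherwise, the new $(i,r),j$ entry equals $t\,x_i^{\beta_j(\lambda)}$ when $\beta_j(\lambda)\equiv r\pmod t$, plus $t\,\x_i^{\beta_j(\lambda)}$ when $\beta_j(\lambda)\equiv -r\pmod t$, and vanishes otherwise. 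Consequently rows with indices $r$ and $t-r$ interact only with the columns whose $\beta_j(\lambda)\equiv\pm r\pmod t$, and the matrix acquires a block structure. For the transformed numerator to be nonsingular each block must be square, forcing $n_0(\lambda)=n$, $n_r(\lambda)+n_{t-r}(\lambda)=2n$ for $1\le r\le \floor{(t-1)/2}$, and additionally $n_{t/2}(\lambda)=n$ when $t$ is even. By \cref{prop:mcd-t-core-quo}(1) these conditions are equivalent to $\core{\lambda}{t}$ being a $(-1)$-asymmetric partition, i.e.\ an orthogonal $t$-core; this proves part (1).

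For part (2), identify each block with a smaller classical-group determinant using \cref{prop:mcd-t-core-quo}(2). The $r=0$ block, with $\beta_j=t\tilde\beta_j^{(0)}$ and entries $x_i^{t\tilde\beta_j^{(0)}}+\x_i^{t\tilde\beta_j^{(0)}}$, produces $\oe_{\lambda^{(0)}}(X^t)$. When $t$ is even, the $r=t/2$ block has $\beta_j=t\tilde\beta_j^{(t/2)}+t/2$, a half-integer-shifted even orthogonal determinant; identity \eqref{oeshifted} rewrites it as $(-1)^{\sum_i\lambda_i^{(t/2)}}\oo_{\lambda^{(t/2)}}(-X^t)$, while the extracted factor $\prod_i(x_i^{1/2}+\x_i^{1/2})^n$ cancels against the matching piece of $D$. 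For each paired $(r,t-r)$ with $1\le r\le \floor{(t-1)/2}$, the $2n\times 2n$ block, after extracting monomial factors $x_i^r$ and $\x_i^r$ from its two column groups and interlacing the $(i,r)$ and $(i,t-r)$ rows, reduces to a Schur numerator in the $2n$ variables $(X^t,\X^t)$ indexed by the concatenated partition $\mu^{(2)}_i$ -- this is exactly the manipulation already recorded in the symplectic proof. Multiplying the blocks together and dividing by $D$ produces the product in \eqref{th:5.2}.

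The principal obstacle is the careful bookkeeping of the sign $\epsilon$. It accumulates contributions from four sources: (i) intra-class column reorderings after $\sigma_\lambda$, giving the $\binom{n_i(\lambda)}{2}$ terms, with summation range $\floor{(t+2)/2}\le i\le t-1$ because only residues past the midpoint correspond to the ``reflected'' $\x$ halves of the paired blocks; (ii) the sign $(-1)^{\sum_i\lambda_i^{(t/2)}}$ contributed by \eqref{oeshifted} in the even-$t$ case; (iii) a parity factor from the $t^{tn}$ arising in the Fourier change-of-basis, balanced against a matching factor extracted from $D$; and (iv) a rank-dependent offset controlling the monomial prefactors pulled out of the Weyl determinants, producing the $nr$ term in the even-$t$ case and the $(t-1)n/2$ constant in the odd-$t$ case. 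Combining these reproduces the compact expression for $\epsilon$ stated in the theorem.
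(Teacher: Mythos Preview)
Your approach is correct and in spirit the same as the paper's, but you execute the block-diagonalization differently. The paper applies the row operations $R_1\to R_1+\cdots+R_t$ followed by $R_i\to R_i-\tfrac{1}{t}R_1$ to split off the residue-$0$ block, and then invokes \cref{lem:det-blockmatrix} (via \cref{cor:det-even}) on the remaining $(t-1)n\times(t-1)n$ block, where the $\gamma$-matrix absorbs the root-of-unity coefficients. You instead perform a full discrete Fourier transform on the rows, which immediately produces the pairing of residue classes $r$ and $t-r$ and gives the block structure in one stroke. Your route is arguably cleaner and avoids the auxiliary $\Gamma$-matrix, at the cost of having to track the row permutation that groups the $(i,r)$ and $(i,t-r)$ rows together; the paper's route packages that bookkeeping into \cref{lem:det-blockmatrix}.

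Two small repairs. First, the equivalence between the conditions $n_0(\lambda)=n$, $n_r(\lambda)+n_{t-r}(\lambda)=2n$ and $\core{\lambda}{t}$ being orthogonal is \cref{cor:ocore}, not \cref{prop:mcd-t-core-quo}(1); the latter only tells you how to read off the core from the $n_i$, and the characterization of $(-1)$-asymmetric $t$-cores requires the further analysis of \cref{lem:converse: sym}. Second, your sign accounting is too schematic to be a proof: the paper's argument, while also terse, pins down $\epsilon$ by writing out the intermediate expression (the analogue of \eqref{num11}) and then simplifying parity-wise using \cref{lem:rank-sympcore-etc}(2). Your item (iv) in particular is asserted rather than computed, and the claim about $t^{tn}$ in item (iii) is not quite right (the power of $t$ from the change of basis cancels against the same factor in the denominator without contributing a sign). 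If you want a complete proof you should either carry out that parity reduction explicitly or appeal to the paper's computation at that point.
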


Again, nonzero $\OE_{2tn}$ characters are a product of characters, but this time there are $\floor{(t-1)/2}$ $\GL_{2n}$ characters, one $\OE_{2n}$ character and, if $t$ is even, one additional $\OO_{2n+1}$ character.
As in the symplectic factorization in \cref{thm:sympfact}, the even orthogonal character for $t = 2$ is nonzero if and only if $\core \lambda 2 = \emptyset$.
{Recall the involution (usually denoted $\omega$) on the space of symmetric functions the takes $s_\lambda$ to $s_{\lambda'}$. Koike and Terada have shown~\cite{koike-terada-1987} that this involution interchanges (universal) orthogonal characters and (universal) symplectic characters. Comparing \cref{thm:sympfact,thm:eorthfact}, it seems reasonable to suppose that we can obtain a proof of the latter from the former using this involution. However, this involution works only at the level of universal characters and does not commute with our specialization.
}

\begin{eg}
For $t = 2$, \cref{thm:eorthfact}  says that the character of the group $\OE(4)$ of the representation indexed by the partition $(a,b)$, $a \geq b \geq 0$, evaluated at  $(x, -x)$ is nonzero if and only if $a$ and $b$ have the same parity. 
If $a$ and $b$ are both odd, then 
\[
\oe_{(a,b)}(x, -x) = (-1)^{(b+1)/2} \oo_{(\frac{b-1}{2})}(-x^2) \oe_{(\frac{a+1}{2})}(x^2),
\]
and if $a$ and $b$ are both even, then 
\[
\oe_{(a,b)}(x, -x) = (-1)^{a/2} \oo_{(\frac{a}{2})}(-x^2) \oe_{(\frac{b}{2})}(x^2).
\]
Notice that all the characters on the right-hand side are for the groups $\OO(3)$ and $\OO(2)$, and in both cases the partitions indexing them are the $2$-quotients and of length 1.
\end{eg}

For the odd orthogonal case, we take $G = \OO_{2tn+1}$, the orthogonal group of $(2tn+1) \times (2tn+1)$ square matrices.
It will turn out that the notion of an `odd-orthogonal partition' is the same as being self-conjugate, or equivalently, $0$-asymmetric. 
The first few self-conjugate $3$-cores are $(1)$, $(3,1,1), (4,2,1,1)$ and $(6,4,2,2,1,1)$.
Our result for factorization of odd orthogonal characters is as follows.

\begin{thm}
\label{thm:oorthfact}
Let $\lambda$ be a partition of length at most $tn$ indexing an irreducible representation of $\OO_{2tn+1}$. Then the $\OO_{2tn+1}$ character $\oo_{\lambda}(X,\omega X, \dots ,\omega^{t-1}X)$ is given as follows.
\begin{enumerate}
    \item  If $\core \lambda t$ is not self-conjugate, then
\begin{equation}
    \label{th:3.1}
    \oo_{\lambda}(X,\omega X,\dots ,\omega^{t-1}X) = 0.
\end{equation}
\item If $\core \lambda t$ is self-conjugate with rank r, then
\begin{equation}
   \begin{split}
     \label{th:3.2} \oo_{\lambda}(X,\omega X, \dots ,\omega^{t-1}X)  =  (-1)^{\epsilon}\sgn(\sigma_{\lambda}) 
\prod_{i=0}^{\floor{\frac{t-2}{2}}} &  s_{\mu^{(3)}_i}(X^t,{\X}^t) \\ & 
\times 
\begin{cases}
\oo_{\lambda^{\left(\frac{t-1}{2}\right)}}(X^t) & t \text{ odd},\\
1 & t \text{ even},
\end{cases}   
    \end{split}
\end{equation}
where 
\[
\epsilon=- \ds \sum_{i=\floor{\frac{t}{2}}}^{t-1} \binom{n_{i}(\lambda)+1}{2} + \begin{cases}
nr & t \text{ odd},
\\ 0 & t \text{ even},
\end{cases}
\]
and 
$\ds \mu^{(3)}_i=  \lambda^{(t-1-i)}_1 + \left(\lambda^{(i)}, 0,\dots,0, -\rev(\lambda^{(t-1-i)})\right)$ has $2n$ parts for 
$0 \leq i \leq \floor{\frac{t-2}{2}}$.
\end{enumerate}
\end{thm}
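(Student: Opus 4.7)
The plan is to follow the uniform strategy of the proof of \cref{thm:sympfact}, adapting it to the odd orthogonal Weyl formula \eqref{oodef}. I begin by substituting $(X,\omega X,\dots,\omega^{t-1}X)$ into the $tn\times tn$ numerator determinant and permuting its columns by $\sigma_\lambda$ so that they are grouped in blocks indexed by the residue of $\beta_j(\lambda)\pmod t$; this permutation contributes $\sgn(\sigma_\lambda)$, and the entry in row $(i,k)$ (variable $x_i$ with twist $\omega^k$) of a column with $\beta_j(\lambda)=tq+r$ becomes $\omega^{k(r+1)}x_i^{tq+r+1}-\omega^{-kr}\x_i^{tq+r}$.

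Next, for each variable $x_i$ I apply the nonsingular row transformation on the $t$ rows $\{(i,0),\dots,(i,t-1)\}$ given by $(i,m)_{\text{new}}=\sum_k \omega^{-mk}(i,k)_{\text{old}}$, $m\in\{0,\dots,t-1\}$; its determinant is a nonzero constant independent of $\lambda$. In the new row $(i,m)$ the residue-$r$ entry collapses to $t\,x_i^{tq+r+1}$ if $m\equiv r+1\pmod t$, to $-t\,\x_i^{tq+r}$ if $m\equiv -r\pmod t$, and to $0$ otherwise. Consequently rows $m$ and $t+1-m$ are jointly supported on the column blocks of residues $\{m-1,\,t-m\}=\{j,\,t-1-j\}$, and for odd $t$ the row $m=(t+1)/2$ is the unique self-paired row, supported on the residue $(t-1)/2$ block alone. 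The numerator determinant therefore factorizes as a product of block determinants: each non-self pair contributes a $2n\times(n_j(\lambda)+n_{t-1-j}(\lambda))$ block and, for odd $t$, the self-paired row contributes an $n\times n_{(t-1)/2}(\lambda)$ block. Each block is square exactly when $n_j(\lambda)+n_{t-1-j}(\lambda)=2n$ for all $j$, together with $n_{(t-1)/2}(\lambda)=n$ when $t$ is odd; by \cref{prop:mcd-t-core-quo} this symmetry of the $n_j$'s is equivalent to $\core \lambda t$ being self-conjugate. Otherwise some block is rectangular and its determinant vanishes, giving part (1). Applying the same steps to the denominator (i.e.\ $\lambda=\emptyset$) factorizes \eqref{oodenom} into the corresponding pieces with no dimension condition to check, since $n_r=n$ throughout for the empty partition.

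Under the self-conjugacy hypothesis I then identify each factor. After an intra-block column reordering that interleaves the $n_j$ ``$x$-columns'' and the $n_{t-1-j}$ ``$\x$-columns'' into the order of a $\GL_{2n}$ bialternant in $(X^t,\X^t)$, the $2n\times 2n$ block for the pair $(j,t-1-j)$ becomes the numerator of $s_{\mu^{(3)}_j}(X^t,\X^t)$ with $\mu^{(3)}_j=\lambda^{(t-1-j)}_1+(\lambda^{(j)},0,-\rev(\lambda^{(t-1-j)}))$; the overall shift by $\lambda^{(t-1-j)}_1$ is dictated by the requirement that $\mu^{(3)}_j$ be an honest partition. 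For odd $t$ the self-paired $n\times n$ block, after using $\beta=tq+(t-1)/2$, is exactly the numerator of $\oo_{\lambda^{((t-1)/2)}}(X^t)$. Dividing the factored numerator by the factored denominator and cancelling the Fourier-transform constants yields \eqref{th:3.2}. The principal difficulty, exactly as in the symplectic case, is the sign accounting producing $\epsilon$: contributions come from reordering the $\x$-columns within each pair (generating the $\binom{n_i(\lambda)+1}{2}$ terms for $i\geq\floor{t/2}$), from the shift $\beta\mapsto\beta+1$ on the $x$-columns, from the parity of $t$, and from the Frobenius rank $r$ of $\core \lambda t$ entering through the self-paired block; this compilation is long but routine, parallel to the proof for $\sp$.
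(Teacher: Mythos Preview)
Your proposal is correct and follows essentially the same strategy as the paper's proof: your explicit row-DFT is precisely the $(\Gamma\otimes I_n)$ factorization that the paper packages abstractly in \cref{lem:det-blockmatrix} and \cref{cor:det-odd}, and the subsequent block identification via \cref{lem:s-new} and sign bookkeeping proceed identically. One minor correction: the equivalence between the symmetry $n_j(\lambda)+n_{t-1-j}(\lambda)=2n$ and self-conjugacy of $\core\lambda t$ is the content of \cref{cor:con} (via \cref{lem:beta sets la la'}), not of \cref{prop:mcd-t-core-quo} directly.
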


Again, nonzero $\OO_{2tn+1}$ characters are a product of characters, but this time there are $\floor{t/2}$ $\GL_{2n}$ characters, and, if $t$ is odd, one additional $\OO_{2n+1}$ character. 
Since $2$-cores are always self-conjugate, odd orthogonal characters always have a nontrivial factorization when $t=2$.

\begin{eg}
For $t = 2$, \cref{thm:oorthfact}  says that the character of the group $\OO(5)$ of the representation indexed by the partition $(a,b)$, $a \geq b \geq 0$, evaluated at $(x, -x)$ is nonzero if and only if $a$ and $b$ have the same parity. We obtain
\[
\oo_{(a,b)}(x, -x) = (-1)^a s_{(\frac{a+b}{2},0)}(x^2,-x^2).
\]
Notice that the character on the right hand side is for $\GL(2)$ and involves the sum of the $2$-quotients.
\end{eg}

\begin{rem}
It might seem that the results of \cref{thm:schur-1}, \cref{thm:sympfact}, \cref{thm:eorthfact} and \cref{thm:oorthfact} are not well-defined because of \cref{rem:necklace}. More precisely, the lack of symmetry of the $t$-quotients on the right hand sides of these theorems might cause some worry. However, since changing $n \to n+1$ will change the length of the partition $\lambda$ by $tn$, the order of the quotients remains unchanged.
\end{rem}

\begin{rem}
In some cases, the Schur functions $s_{\mu_i^{(j)}}(X^t,{\X}^t)$ appearing on the right hand sides of \cref{thm:sympfact}, \cref{thm:eorthfact} and \cref{thm:oorthfact} for $j \in [3]$ respectively
factorize further into characters of other classical groups, but we do not understand this behavior fully. Whenever $\mu_i$ can be written as $\rho_1 + (\rho, -\rev (\rho))$ or $\rho_1 + (1 + \rho, -\rev (\rho))$ for a partition $\rho$ of length at most 
$n$, such a factorization occurs by the results in \cite{ayyer-behrend-2019}. In that case $s_{\mu_i}$ is either a product of two odd orthogonal characters or an even orthogonal and a symplectic character.
\end{rem}

It is natural to ask if there are infinitely many symplectic, orthogonal and self-conjugate $t$-cores. As we have seen, there are no symplectic or orthogonal $2$-cores and all $2$-cores are self-conjugate. For $t \geq 3$, it has been proved~\cite{garvan-kim-stanton-1990} that there are infinitely many self-conjugate $t$-cores. Our last result gives a generalisation.

\begin{thm} 
\label{thm:inf-cores}
There are infinitely many symplectic and orthogonal $t$-cores for $t \geq 3$.
\end{thm}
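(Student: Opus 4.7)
The plan is to reduce to the symplectic case and then count symplectic $t$-cores via the $t$-abacus.

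Conjugation of partitions sends the Frobenius coordinates $(\alpha|\beta)$ to $(\beta|\alpha)$, so it swaps symplectic partitions ($\beta=\alpha+1$) with orthogonal partitions ($\beta=\alpha-1$), and it commutes with the $t$-core operation. This yields a bijection between symplectic $t$-cores and orthogonal $t$-cores, so it suffices to produce infinitely many symplectic $t$-cores for $t\geq 3$.

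We encode a partition by its Maya diagram $M(\lambda)=\{\lambda_i-i:i\geq 1\}\subset\mathbb{Z}$ (equivalently, a beta-set placed on a $t$-abacus). By \cref{prop:mcd-t-core-quo}, $\lambda$ is a $t$-core iff, for each residue $r\in\{0,\dots,t-1\}$, the elements of $M(\lambda)$ of residue $r\bmod t$ form $\{r+tk:k\leq K_r\}$ for some $K_r\in\mathbb{Z}$, subject to the charge constraint $\sum_{r=0}^{t-1}K_r=-t$. A direct computation starting from the Frobenius form $\lambda=(\alpha|\alpha+1)$ shows that $\lambda$ is symplectic iff $-1\in M(\lambda)$ and, for every $c\neq -1$, $c\in M(\lambda)$ iff $-2-c\notin M(\lambda)$.

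Combining these conditions, the involution $c\mapsto -2-c$ induces the pairing of runners $r\leftrightarrow (t-2-r)\bmod t$. On a genuine pair of distinct runners $\{r,r'\}$ the symmetry reduces to the single equation $K_r+K_{r'}=-2$, contributing one free integer parameter; on each self-paired runner it forces $K_r=-1$. The equation $2r\equiv -2\pmod t$ shows the self-paired runners are $\{t-1\}$ when $t$ is odd and $\{(t-2)/2,\,t-1\}$ when $t$ is even, so for $t\geq 3$ there is at least one pair of distinct runners. Varying the free integer $K_r$ on such a pair produces infinitely many distinct Maya diagrams, and hence infinitely many symplectic $t$-cores; the orthogonal count follows from the first step.

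The main obstacle is the off-by-one combinatorics at the self-paired runner $r=t-1$, where the involution fixes $k=-1$ and hence cannot biject beads with holes there. A careful case analysis shows that this defect is exactly absorbed by the condition $-1\in M(\lambda)$, forcing $K_{t-1}=-1$ without additional slack. Once this is settled, writing $|\lambda|$ as an explicit quadratic form in the remaining free parameters yields, as a bonus, the generating function for symplectic $t$-cores promised in \cref{sec:gf}.
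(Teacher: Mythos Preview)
Your approach is correct and is essentially the paper's own: both parametrize symplectic $t$-cores by the lattice $\mathbb{Z}^{\lfloor(t-1)/2\rfloor}$ via the abacus, using the involution on beta-numbers that characterizes $1$-asymmetric partitions (your condition ``$c\in M\Leftrightarrow -2-c\notin M$'' is \cref{lem:cond}(2), and your runner-by-runner analysis is exactly \cref{lem:converse: sym}), then conclude infinitude from $\lfloor(t-1)/2\rfloor\ge 1$ for $t\ge 3$. The only cosmetic differences are your preliminary reduction of the orthogonal case to the symplectic one by conjugation (the paper instead treats general $z$ uniformly in \cref{thm:bijection}) and your use of Maya diagrams in place of beta-sets.
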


This is proved in \cref{sec:gf}.

\section{Background results}
\label{sec:background}

We collect all the assorted results we will need to prove our main results here. In \cref{sec:betasets}, we will use beta sets of partitions to classify symplectic partitions and their generalizations. We will derive generating functions for such partitions and prove that there are infinitely many of them in \cref{sec:gf}. Finally, we will derive determinant identities for block matrices in \cref{sec:det}.

\subsection{Properties of beta sets}
\label{sec:betasets}

In his treatise, Macdonald~\cite{macdonald-2015} used beta sets to derive powerful results for cores and quotients. We review and extend his results to the cases of interest. First, we recall a useful property of the beta numbers. 
Throughout, we will use the notation $[m] = \{1,\dots,m\}$ and $[m_1,m_2]=\{m_1,\dots,m_2\}$.

\begin{lem}
\label{lem:perm}
Let $\lambda$ and $\mu$ be partitions of length at most $m_1$ and $m_2$ respectively and let $m_2 \geq \lambda_1$. Then $\lambda'=\mu$ if and only if the $m_1+m_2$ numbers $\beta_j(\lambda)$ for $j \in [m_1]$ and $m_1+m_2-1-\beta_k(\mu)$ for $k \in [m_2]$ form a permutation of $\{0,1,\dots,m_1+m_2-1\}$.
\end{lem}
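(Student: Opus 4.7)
The plan is to reduce the biconditional to a disjointness condition on two explicit sets, and then verify it by a direct case analysis on the definition of conjugation. First I would set $A = \{\beta_j(\lambda) : j \in [m_1]\}$ and $B = \{m_1+m_2-1-\beta_k(\mu) : k \in [m_2]\}$; both consist of distinct elements since beta-numbers are strictly decreasing. A quick range check, using $\ell(\lambda) \leq m_1$, $\lambda_1 \leq m_2$, and the analogous bounds on $\mu$ (in particular $\mu_1 \leq m_1$, which must hold for the combined set to lie in $\{0, \dots, m_1+m_2-1\}$ at all, and which holds automatically when $\mu = \lambda'$), shows $A, B \subseteq \{0,\dots,m_1+m_2-1\}$. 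Since $|A|+|B| = m_1+m_2$, the combined set is a permutation of $\{0,\dots,m_1+m_2-1\}$ if and only if $A \cap B = \emptyset$, and this is the form I would work with.

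For the forward direction, assume $\mu = \lambda'$ and suppose for contradiction that $\beta_j(\lambda) = m_1+m_2-1-\beta_k(\mu)$ for some $j, k$; substituting the definitions and simplifying reduces this to $\lambda_j - j + 1 = k - \mu_k$. I would then invoke the defining property of the conjugate, $\mu_k = |\{i : \lambda_i \geq k\}|$, which yields the equivalence $\mu_k \geq j \iff \lambda_j \geq k$. Splitting into cases: if $\lambda_j \geq k$, then also $\mu_k \geq j$, giving $\lambda_j - j + 1 \geq k - j + 1 > k - j \geq k - \mu_k$; if instead $\lambda_j < k$, then $\mu_k < j$, giving $k - \mu_k \geq k - j + 1 > k - j \geq \lambda_j - j + 1$. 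Both cases strictly exclude the presumed equality, which establishes $A \cap B = \emptyset$ and hence the forward implication.

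For the converse, observe that if $A$ and $B$ partition $\{0, \dots, m_1+m_2-1\}$, then $B$ is the set-theoretic complement of $A$ in that interval, and is therefore uniquely determined by $\lambda$. Since $B$ determines the decreasing sequence $\beta(\mu)$, it determines $\mu$ itself. Because $\lambda'$ already satisfies the condition by the forward direction, the unique such $\mu$ must equal $\lambda'$. The only substantive step is the strict-inequality case analysis in the forward direction; I expect this to be entirely elementary, and apart from it the argument is bookkeeping with the definitions of beta-sets and conjugate partitions.
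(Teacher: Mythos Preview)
Your proof is correct. The overall structure coincides with the paper's: both prove the converse by noting that the complement of $A=\{\beta_j(\lambda)\}$ in $\{0,\dots,m_1+m_2-1\}$ uniquely determines $\mu$, and then invoke the forward direction applied to $\lambda'$ to conclude $\mu=\lambda'$. The only difference is in the forward implication: the paper simply cites Macdonald's identity~(1.7) from \cite{macdonald-2015}, whereas you supply a direct elementary argument via the case split on $\lambda_j\geq k$ versus $\lambda_j<k$, using the equivalence $\mu_k\geq j\iff\lambda_j\geq k$. Your version is thus more self-contained, at the cost of a few extra lines; otherwise the two proofs are essentially the same.
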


\begin{proof}
The forward implication holds by~\cite[Chapter I.1, Equation (1.7)]{macdonald-2015}. 

For the converse, since  $m_2 \geq \lambda_1$, the $m_1+m_2$ numbers $\beta_j(\lambda)$ for $j \in [m_1]$ and $m_1+m_2-1-\beta_k(\lambda')$ for $k \in [m_2]$ are a permutation of $\{0,1,\dots,m_1+m_2-1\}$ by~\cite[Chapter I.1, Equation (1.7)]{macdonald-2015}. 
So, $\beta_k(\lambda')=\beta_k(\mu)$, $k \in [m_2]$ and $\lambda'=\mu$.    
\end{proof}

Let $\lambda,\mu$ be partitions of length at most $m$ such that $\lambda \supset \mu$, and such that the set difference of Young diagrams $\lambda \setminus \mu$ is a border strip of length $t$. 
Then, it is known~\cite[Chapter I.1, Example 8(a)]{macdonald-2015} that $\beta(\mu)$ can be obtained from $\beta(\lambda)$ by subtracting $t$ from some part $\beta_i(\lambda)$ and rearranging in descending order. 
Therefore, for a partition $\lambda$ of length at most $m$,  we see that
\begin{equation}
\label{no-parts-partition=core}
 n_{i}(\lambda,m)=n_{i}(\core \lambda t, m), 
 \quad 0 \leq i \leq t-1.  
\end{equation}

We now 
explain the relationship between a partition and its conjugate in terms of their beta sets. 

\begin{lem}
\label{lem:beta sets la la'}
Let $\lambda$ and $\mu$ be partitions of length at most $tm_1$ and $tm_2$ respectively. 
If $\mu=\lambda'$, then
\begin{equation}
\label{z=0}
    n_{i}(\lambda)+n_{t-1-i}(\mu)=m_1+m_2,
    \quad 
    0 \leq i \leq t-1.
\end{equation}
The converse is true if $\lambda$ and $\mu$ are $t$-cores.
\end{lem}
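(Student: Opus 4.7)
The plan is to reduce both implications to \cref{lem:perm} applied with its two lengths set to $tm_1$ and $tm_2$, and then count elements by residue class modulo $t$.

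For the forward direction, assume $\mu=\lambda'$. Then $\ell(\mu)=\lambda_1\le tm_2$, so \cref{lem:perm} says that the multiset
\[
S \;:=\; \{\beta_j(\lambda,tm_1) : j\in[tm_1]\} \;\cup\; \{t(m_1+m_2)-1-\beta_k(\mu,tm_2) : k\in[tm_2]\}
\]
is a permutation of $\{0,1,\dots,t(m_1+m_2)-1\}$. Counting the elements of $S$ congruent to $i\pmod t$: the first block contributes $n_i(\lambda)$, while the second contributes $n_{t-1-i}(\mu)$, since $-1-\beta_k\equiv i$ if and only if $\beta_k\equiv t-1-i\pmod t$. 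The right-hand side of the permutation contains exactly $m_1+m_2$ elements in each residue class, which gives \eqref{z=0}.

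For the converse, suppose $\lambda$ and $\mu$ are $t$-cores satisfying \eqref{z=0}. By \cref{prop:mcd-t-core-quo}(1), their beta-sets are completely determined by the multiplicities $n_i$: for instance
\[
\beta(\lambda,tm_1) \;=\; \{jt+i : 0\le i\le t-1,\ 0\le j\le n_i(\lambda)-1\},
\]
and analogously for $\mu$. The relation \eqref{z=0} forces $n_i(\lambda)\le m_1+m_2$, so the largest element of $\beta(\lambda,tm_1)$ is at most $t(m_1+m_2)-1$, giving $\lambda_1\le tm_2$; this is what lets me invoke \cref{lem:perm} in the reverse direction. Restricting to residue $i$, the elements of $S$ come from the block $\{tj+i:0\le j\le n_i(\lambda)-1\}$ together with the reflected block $\{tl+i:m_1+m_2-n_{t-1-i}(\mu)\le l\le m_1+m_2-1\}$; these are disjoint and their union is $\{i,i+t,\dots,i+(m_1+m_2-1)t\}$ precisely when $n_i(\lambda)+n_{t-1-i}(\mu)=m_1+m_2$. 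Taking the union over $i$ gives $S=\{0,\dots,t(m_1+m_2)-1\}$, and \cref{lem:perm} concludes $\mu=\lambda'$.

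The subtle step I expect will need the most care is the converse, because \cref{lem:perm} is not directly available: its hypothesis $\lambda_1\le tm_2$ is not part of what is assumed and must be extracted from \eqref{z=0} together with the explicit form of the beta-set of a $t$-core. This also shows why the $t$-core hypothesis is genuinely needed for the converse: without it, the multiplicities $n_i$ do not determine the beta-set, and one can easily produce non-conjugate pairs of partitions with the same $n_i$.
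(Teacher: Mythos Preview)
Your proof is correct and follows essentially the same approach as the paper's: both directions hinge on \cref{lem:perm} and a residue-class count, with the converse using the explicit initial-segment form of a $t$-core's beta-set to reconstruct the full permutation of $\{0,\dots,t(m_1+m_2)-1\}$. Your write-up is in fact slightly more careful than the paper's in flagging up front that $\lambda_1\le tm_2$ must be verified before invoking the converse of \cref{lem:perm}.
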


\begin{proof}
Suppose $\mu=\lambda'$. 
Then \cref{lem:perm} implies that the numbers $\beta_j(\lambda)$ for $1 \leq j \leq tm_1$ and $tm+tn-1-\beta_k(\mu)$ for $1 \leq k \leq tm_2$
are a permutation of $\{0,1,\dots,tm_1+tm_2-1\}$.
Since $\xi \equiv t-1-i \pmod{t}$ 
implies $tm_1+tm_2-1-\xi \equiv i \pmod{t}$, $n_{i}(\lambda)+n_{t-1-i}(\mu)$ 
is equal to number of integers in $\{0,1,\dots,tm_1+tm_2-1\}$ 
congruent to $i \pmod{t}$.
Since for each $0 \leq i \leq t-1$, there are $m_1+m_2$ numbers in $\{0,1,\dots,tm_1+tm_2-1\}$
congruent to $i$ modulo $t$, \eqref{z=0} holds.

Conversely, assume $\lambda$ and $\mu$ are $t$-cores and \eqref{z=0} holds. 
Fix $i$, $0 \leq i \leq t-1$.
Since $\lambda$ is a $t$-core, the numbers $i < i+t < \dots < i+(n_{i}(\lambda)-1)t$ 
occur in $\beta(\lambda)$. Similarly, since $\mu$ is a $t$-core and $0 \leq t-i-1 \leq t-1$, the numbers $t-1-i < 2t-1-i < \dots < (n_{t-1-i}(\mu))t-1-i$ occur in $\beta(\mu)$. 

So, the parts of $\beta(\lambda)$ and $tm_1+tm_2-1-\beta(\mu)$ congruent to $i \pmod t$ are 
\[
i < i+t < \dots < i+(n_{i}(\lambda)-1)t 
\] 
and
\[
t(m_1+m_2-1)+i>t(m_1+m_2-2)+i>\dots>(n_{i}(\lambda))t+i
\] 
respectively. Therefore, all the numbers congruent to $i \pmod t$ between $i$ and $t(m_1+m_2-1)+i$ appear in the union of $\beta(\lambda)$ and $tm_1+tm_2-1-\beta(\mu)$.
Since this holds for each $0 \leq i \leq t-1$, parts of $\beta(\lambda)$ and $tm_1+tm_2-1-\beta(\mu)$ are a permutation of $\{0,1,\dots,tm_1+tm_2-1\}$. 
Moreover, the largest part of $\beta(\lambda)$ is at most $t(m_1+m_2)-1$ implies $\lambda_1$ is at most $tm_2$. So, by \cref{lem:perm}, $\mu=\lambda'$, completing the proof.
\end{proof}

Using \cref{lem:beta sets la la'} and \eqref{no-parts-partition=core} for $m=tn$,  we have the following corollary.

\begin{cor} 
\label{cor:con}
For a partition $\lambda$ of length at most $tn$,
$\core \lambda t$ is self-conjugate if and only if
\begin{equation}
n_{i}(\lambda)+n_{t-1-i}(\lambda)=2n, 
\quad 0 \leq i \leq \floor{\frac{t-1}{2}}.   
\end{equation}
\end{cor}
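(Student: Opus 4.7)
The plan is to reduce the claim to \cref{lem:beta sets la la'} applied to $\nu = \core{\lambda}{t}$ and its conjugate $\nu'$ with $m_1 = m_2 = n$, and then to transfer the resulting identity from $\nu$ back to $\lambda$ using \eqref{no-parts-partition=core}. The point is that \cref{lem:beta sets la la'} already encodes the conjugation-versus-beta-set dictionary, so the only work left is to specialize it to the self-conjugate case and to trim the index range by symmetry.

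First I would observe that $\nu \subseteq \lambda$ forces $\ell(\nu) \leq tn$, so $\nu$ satisfies the length hypothesis of \cref{lem:beta sets la la'} with $m_1 = n$. For the forward implication, assuming $\nu = \nu'$, we also get $\nu_1 = \ell(\nu') = \ell(\nu) \leq tn$, so the first half of \cref{lem:beta sets la la'} applied with $\lambda \to \nu$, $\mu \to \nu$, and $m_1 = m_2 = n$ immediately yields $n_{i}(\nu, tn) + n_{t-1-i}(\nu, tn) = 2n$ for all $0 \leq i \leq t-1$; then \eqref{no-parts-partition=core} passes the identity to $\lambda$, and restricting to $0 \leq i \leq \floor{(t-1)/2}$ is trivial.

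For the converse, I would first note that the condition $n_i(\lambda) + n_{t-1-i}(\lambda) = 2n$ is symmetric under $i \leftrightarrow t-1-i$, so assuming it for $0 \leq i \leq \floor{(t-1)/2}$ automatically extends it to all $0 \leq i \leq t-1$ (for odd $t$, the middle index $i = (t-1)/2$ contributes the single constraint $n_{(t-1)/2}(\lambda) = n$, which is consistent with the symmetry). By \eqref{no-parts-partition=core}, the same system holds with $\nu$ in place of $\lambda$, and since $\nu$ is a $t$-core we fall into the hypothesis of the converse half of \cref{lem:beta sets la la'} applied with both partitions equal to $\nu$ and $m_1 = m_2 = n$; this yields $\nu' = \nu$.

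I do not anticipate any real obstacle here: the corollary is essentially a direct specialization of \cref{lem:beta sets la la'} to the self-conjugate situation, together with the fact that $n_i(\cdot, tn)$ is invariant under removing border strips of length $t$. The only mildly delicate point is confirming the symmetry argument that cuts the range of $i$ down to $\{0, \ldots, \floor{(t-1)/2}\}$, which just requires separating the cases of $t$ even and $t$ odd.
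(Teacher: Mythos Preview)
Your proposal is correct and follows essentially the same approach as the paper: the paper's proof is a one-line reference to \cref{lem:beta sets la la'} (specialized to $\mu=\lambda=\core{\lambda}{t}$ with $m_1=m_2=n$) together with \eqref{no-parts-partition=core}, which is precisely what you spell out. Your extra care in noting the symmetry $i\leftrightarrow t-1-i$ to recover the full index range, and in checking the length bounds on $\nu$, simply fills in details the paper leaves implicit.
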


Recall the definition of $z$-asymmetric partition from \cref{def:z-asym}.
Let $\mathcal{P}_{z}$ be the set of $z$-asymmetric partitions and $\mathcal{P}_{z,t}$ be the set of $z$-asymmetric $t$-cores.

\begin{lem}
\label{lem:cond} 
Let $\lambda=(\alpha|\beta)$ be a partition of length at most $m$ and rank $r$.
Then the following statements are equivalent.
\begin{enumerate}
    \item \label{condd1} $\lambda \in \mathcal{P}_{z}$.
    \item \label{condd}
an integer $\xi$ between  $0$ and  $m-z-1$ occurs in $\beta(\lambda)$ if and only if  $2m-z-1-\xi$ does not.
\item \label{condd2}
$\beta(\lambda)$ is obtained from the sequence $(\alpha_1+m,\dots,\alpha_r+m, m-1,\dots,1,0$)
by deleting the numbers $m-z-1-\alpha_r>m-z-1-\alpha_{r-1}>\dots>m-z-1-\alpha_1$ 
lying between $0$ and $m-1$.
\end{enumerate}
\end{lem}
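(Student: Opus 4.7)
The plan is to prove the cycle $(1)\Rightarrow(3)\Rightarrow(2)\Rightarrow(1)$, all stemming from a single structural identity expressing a beta set in terms of Frobenius coordinates. First I would record the identity
\[
\beta(\lambda,m) = \{\alpha_j+m : 1\le j\le r\} \sqcup \bigl(\{0,1,\dots,m-1\}\setminus\{m-1-\beta_j : 1\le j\le r\}\bigr),
\]
valid for any partition $\lambda=(\alpha|\beta)$ of rank $r$ with $\ell(\lambda)\le m$. Geometrically, the large beta numbers (those $\ge m$) are in bijection with the cells on or above the main diagonal of the Young diagram of $\lambda$ and encode $\alpha$; the missing small beta numbers (those $<m$ not in the set) encode $\beta$. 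This identity follows from a direct diagonal count, or by induction on $r$ using $\lambda\mapsto\lambda\setminus\{(1,1)\}$.

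For $(1)\Rightarrow(3)$ I would substitute $\beta_j=\alpha_j+z$ into the identity, so that $m-1-\beta_j=m-z-1-\alpha_j$, which is precisely the deletion prescribed in (3). One must check that each $m-z-1-\alpha_j$ actually lies in $[0,m-1]$: the bound $\alpha_1\le m-z-1$ comes from $\ell(\lambda)\le m$ and $\lambda\in\mathcal{P}_z$, since $\alpha_1+z+1=\beta_1+1=\lambda'_1\le m$, and $\alpha_r\ge\max(0,-z)$ because $\beta_r=\alpha_r+z\ge0$. The converse $(3)\Rightarrow(1)$ is immediate: read $\alpha_j$ off from the large beta numbers and $\beta_j$ from the positions of the deleted small ones, which forces $\beta_j=\alpha_j+z$. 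For $(3)\Rightarrow(2)$, observe that the involution $\phi(\xi)=2m-z-1-\xi$ restricts to a bijection $[0,m-z-1]\leftrightarrow[m,2m-z-1]$ sending each deleted position $m-z-1-\alpha_j$ to the large beta number $\alpha_j+m$. Consequently $\beta(\lambda)\cap[0,m-z-1]$ and $\beta(\lambda)\cap[m,2m-z-1]$ are $\phi$-complementary, which is exactly statement (2).

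For the remaining direction $(2)\Rightarrow(1)$, I would invoke \cref{lem:perm} with $m_1=m$ and $m_2=m-z$ applied to $\lambda$ and $\mu=\lambda'$, yielding the complementarity $\xi\in\beta(\lambda,m)\iff 2m-z-1-\xi\notin\beta(\lambda',m-z)$ on the full range $[0,2m-z-1]$. Combining this with (2) forces $\beta(\lambda,m)$ and $\beta(\lambda',m-z)$ to share their $r$ largest entries, and translating via $\beta_i(\lambda,m)=\lambda_i+m-i$ and $\beta_i(\lambda',m-z)=\lambda'_i+(m-z)-i$ gives $\lambda'_i=\lambda_i+z$ for $1\le i\le r$, i.e., $\lambda\in\mathcal{P}_z$. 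I expect the main obstacle to be the verification that \cref{lem:perm} applies, which requires $\lambda_1\le m-z$ so that $\beta(\lambda',m-z)$ is well-defined; this bound is automatic once (1) is known, but must first be extracted from (2) together with $\ell(\lambda)\le m$ by a careful bookkeeping of the cardinalities of $\beta(\lambda)$ in the four disjoint intervals $[0,m-z-1]$, $[m-z,m-1]$, $[m,2m-z-1]$, and $[2m-z,\infty)$, using that (2) pins down the sum of the first and third and that the total cardinality is $m$.
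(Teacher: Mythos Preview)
Your treatment of $(1)\Leftrightarrow(3)$ and $(3)\Rightarrow(2)$ is correct and matches the paper's approach: the paper writes out the parts and the beta set of a $z$-asymmetric partition explicitly, which is exactly your structural identity specialized to $\beta_j=\alpha_j+z$, and then treats $(3)\Rightarrow(2)$ as evident.

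The gap is in your closing step $(2)\Rightarrow(1)$. The bound $\lambda_1\le m-z$ cannot be extracted from (2) and $\ell(\lambda)\le m$ by the bookkeeping you sketch: in your notation (2) gives $a_1+a_3=m-z$ and the total count gives $a_2+a_4=z$, but since $a_2\le z$ this only yields $0\le a_4\le z$, not $a_4=0$. Concretely, take $m=3$, $z=1$, $\lambda=(3)$, so that $\beta(\lambda,3)=\{5,1,0\}$: each $\xi\in\{0,1\}$ lies in $\beta(\lambda)$ while $4-\xi\in\{4,3\}$ does not, so (2) holds, yet $(3)=(2\,|\,0)\notin\mathcal{P}_1$ and $\lambda_1=3>m-z=2$. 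Thus (2) as stated does not imply (1), and your cycle cannot close as planned. For comparison, the paper argues $(2)\Rightarrow(3)$ directly in a single sentence (identifying the $r$ parts $\ge m$ as the $\alpha_j+m$ and declaring (3)); that argument is equally silent about the interval $[m-z,m-1]$ and about ruling out parts $\ge 2m-z$, and the same counterexample applies. The upshot is that condition (2) really needs to be accompanied by the hypothesis $\lambda_1\le m-z$ (equivalently $\beta(\lambda)\subseteq[0,2m-z-1]$); under that extra assumption your bookkeeping does force $a_4=0$ and $a_2=z$, after which either your route via \cref{lem:perm} or the paper's direct route goes through.
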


\begin{proof} First, note that $\lambda \in \mathcal{P}_{z}$ if and only if $\lambda$ is of the form
\begin{equation*}
 \lambda=  (\alpha_1+1,\dots,\alpha_r+r,
 \underbrace{r,\dots,r}_{\alpha_r+z },
 \underbrace{r-1,\dots,r-1}_{\alpha_{r-1}-\alpha_r-1 },\dots,
 \underbrace{1,\dots,1}_{\alpha_1-\alpha_2-1}).
\end{equation*} 
In that case, its beta set is 
\begin{multline*}
\beta(\lambda)=(\alpha_1+m,\dots,\alpha_r+m,
\underbrace{m-1,\dots,m-(\alpha_r+z)}_{\alpha_r+z },
\widehat{m-(\alpha_r+z+1)},\\
\underbrace{m-(\alpha_r+z+2),
\dots,m-(\alpha_{r-1}+z)}_{\alpha_{r-1}-\alpha_r-1 },
\widehat{m-(\alpha_{r-1}+z+1)},
\dots,\widehat{m-(\alpha_2+z+1)},\\
\underbrace{m-(\alpha_2+z+2),\dots,m-(\alpha_1+z)}_{\alpha_1-\alpha_2-1},\widehat{m-(\alpha_1+z+1)},m-(\alpha_1+z+2),\dots,0
),
\end{multline*}
where a hat on an entry denotes its absence from the tuple.
So, \cref{condd1} and \cref{condd2} are equivalent. 
See \cref{fig:z-asymmetric}(a) and (b) for the last few rows of the Young diagram of a $z$-asymmetric partition and its beta set.

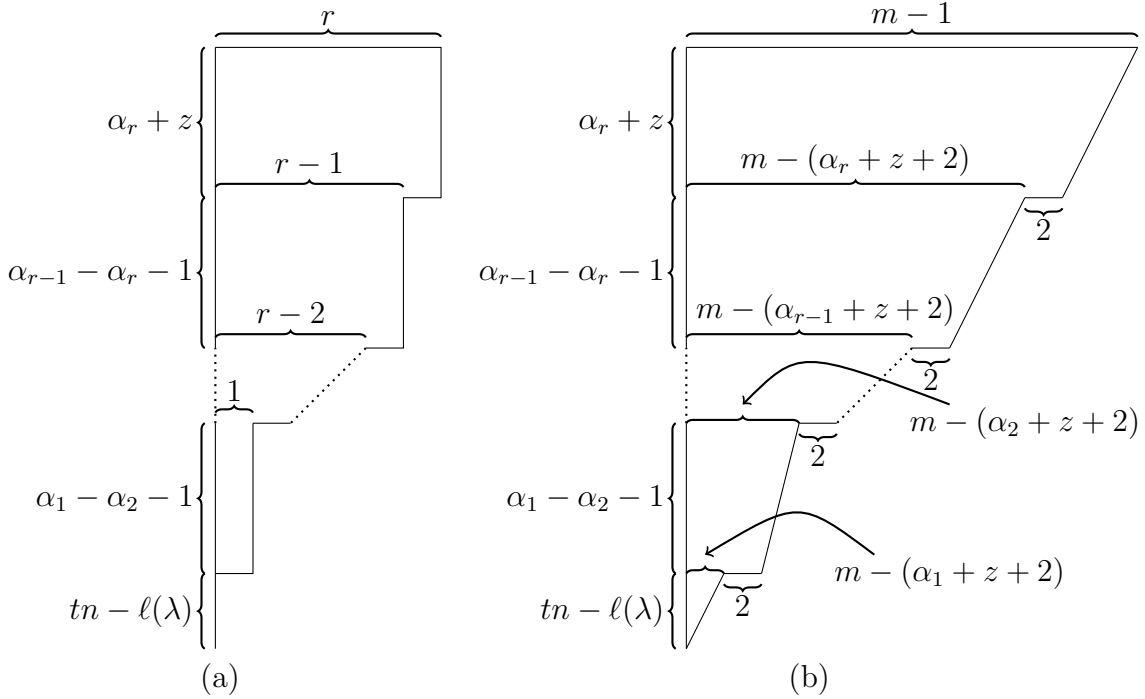
\begin{figure}[!htbp]
\begin{center}

\begin{tabular}{c c}
     \begin{tikzpicture}
     \draw (-1,0)--(2,0)--(2,-2)--(1.5,-2)--(1.5,-4)--(1,-4);
     \draw (-1,-7)--(-1,-5);
\draw (-1,-4)--(-1,0);
\draw[dotted,thick,xshift=0pt,yshift=0pt] (-1,-5)--(-1,-4);
\draw[dotted,thick,xshift=0pt,yshift=0pt] (1,-4)--(0,-5);
\draw[decorate,decoration={brace},thick,xshift=-4pt,yshift=0pt] (-1,-2)--(-1,0) node[midway,left]{$\alpha_r+z$};
 \draw[decorate,decoration={brace},thick,xshift=-4pt,yshift=0pt] (-1,-4)--(-1,-2) node[midway,left]{$\alpha_{r-1}-\alpha_r-1$};
 \draw[decorate,decoration={brace},thick,xshift=-4pt,yshift=0pt] (-1,-7)--(-1,-5) node[midway,left]{$\alpha_1-\alpha_2-1$};
 \draw (-0.5,-5)--(0,-5);
\draw (-0.5,-5)--(-0.5,-7)--(-1,-7);
\draw[decorate,decoration={brace},thick,xshift=0pt,yshift=4pt] (-1,0)--(2,0) node[midway,above]{$r$};
\draw[decorate,decoration={brace},thick,xshift=0pt,yshift=4pt] (-1,-2)--(1.5,-2) node[midway,above]{$r-1$};
\draw[decorate,decoration={brace},thick,xshift=0pt,yshift=4pt] (-1,-4)--(1,-4) node[midway,above]{$r-2$};
\draw[decorate,decoration={brace},thick,xshift=0pt,yshift=4pt] (-1,-5)--(-0.5,-5) node[midway,above]{$1$};
\draw (-1,-8)--(-1,-7);
\draw[decorate,decoration={brace},thick,xshift=-4pt,yshift=0pt] (-1,-8)--(-1,-7) node[midway,left]{$tn-\ell(\lambda)$};
     \end{tikzpicture}
&
\hfill     \begin{tikzpicture}
\draw (-1,-7)--(-1,-5);
\draw (-1,-4)--(-1,0);
\draw (-1,0)--(5,0);
 \draw (5,0)--(4,-2);
 \draw (4,-2)--(3.5,-2);
 \draw (3.5,-2)--(2.5,-4);
 \draw (2.5,-4)--(2,-4);
 \draw[dotted,thick,xshift=0pt,yshift=0pt] (2,-4)--(1,-5);
 \draw (1,-5)--(0.5,-5);
 \draw (0.5,-5)--(0,-7);
 \draw (0,-7)--(-0.5,-7);
  \draw (-0.5,-7)--(-1,-8);
 \draw (-1,-8)--(-1,-7);
\draw[dotted,thick,xshift=0pt,yshift=0pt] (-1,-5)--(-1,-4);
\draw[decorate,decoration={brace},thick,xshift=0pt,yshift=4pt] (-1,-2)--(3.5,-2) node[midway,above]{$m-(\alpha_r+z+2)$};
\draw[decorate,decoration={brace},thick,xshift=0pt,yshift=0pt] (-1,-5)--(0.5,-5);
\node (A) at (3.5, -5) {$m-(\alpha_2+z+2)$};
\draw[thick, ->] (2.5, -4.75) .. controls (0.5,-4) .. (-0.25, -4.75);
\draw[decorate,decoration={brace},thick,xshift=0pt,yshift=-4pt] (1,-5)--(0.5,-5) node[midway,below]{$2$};
\draw[decorate,decoration={brace},thick,xshift=0pt,yshift=-4pt] (4,-2)--(3.5,-2) node[midway,below]{$2$};
\draw[decorate,decoration={brace},thick,xshift=0pt,yshift=-4pt] (2.5,-4)--(2,-4) node[midway,below]{$2$};
\draw[decorate,decoration={brace},thick,xshift=0pt,yshift=-4pt] (0,-7)--(-0.5,-7) node[midway,below]{$2$};
\draw[decorate,decoration={brace},thick,xshift=0pt,yshift=4pt] (-1,-4)--(2,-4) node[midway,above,xshift=10pt]{$m-(\alpha_{r-1}+z+2)$};
\draw[decorate,decoration={brace},thick,xshift=0pt,yshift=4pt] (-1,0)--(5,0) node[midway,above]{$m-1$};
 \draw[decorate,decoration={brace},thick,xshift=-4pt,yshift=0pt] (-1,-2)--(-1,0) node[midway,left]{$\alpha_r+z$};
 \draw[decorate,decoration={brace},thick,xshift=-4pt,yshift=0pt] (-1,-4)--(-1,-2) node[midway,left]{$\alpha_{r-1}-\alpha_r-1$};
 \draw[decorate,decoration={brace},thick,xshift=-4pt,yshift=0pt] (-1,-7)--(-1,-5) node[midway,left]{$\alpha_1-\alpha_2-1$};
 \draw[decorate,decoration={brace},thick,xshift=0pt,yshift=0pt] (-1,-7)--(-0.5,-7);
 \node (A) at (2.5, -7) {$m-(\alpha_1+z+2)$};
 \draw[thick, ->] (1.5, -6.75) .. controls (0.5,-6) .. (-0.75, -6.75);
 \draw[decorate,decoration={brace},thick,xshift=-4pt,yshift=0pt] (-1,-8)--(-1,-7) node[midway,left]{$tn-\ell(\lambda)$};
\end{tikzpicture}
\\
(a) & (b)
\end{tabular}

\caption{Rows $r+1$ through $tn$ a $z$-asymmetric partition $(\alpha | \, \alpha + z)$ of rank $r$ on the left and the same rows of its beta set on the right. }
\label{fig:z-asymmetric}
\end{center}
\end{figure}

Clearly, \cref{condd2} implies \cref{condd}. 
Now suppose \cref{condd} holds. Observe that a part of $\beta(\lambda)$, $\lambda_i+m-i$ is greater than and equal to $m$ if and only if $\lambda_i$ is greater than and equal to $i$.
Thus there are $r$ parts of $\beta(\lambda)$ greater than m. Since $\alpha_1+m > \dots > \alpha_r+m$ are $r$ integers greater than and equal to $m$ which occur in $\beta(\lambda)$, \cref{condd2} holds.
\end{proof}

\begin{lem}
\label{lem:z-large}
For $2 \leq t \leq z + 1$, the empty partition is the only $t$-core in $\mathcal{P}_{z,t}$. 
\end{lem}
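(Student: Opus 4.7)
I would argue by contradiction. Suppose $\lambda = (\alpha \mid \alpha+z) \in \mathcal{P}_{z,t}$ has Frobenius rank $r \geq 1$, and fix $m = tn$ large enough that $\ell(\lambda) = \alpha_1 + z + 1 \leq m$. By \cref{lem:cond}, the beta set decomposes as
\[
\beta(\lambda) = \bigl(\{0, 1, \dots, m-1\} \setminus D\bigr) \cup A,
\]
where $D = \{m-z-1-\alpha_i\}_{i=1}^{r} \subseteq [0, m-z-1]$ and $A = \{m+\alpha_i\}_{i=1}^{r}$. Since $\lambda$ is a $t$-core, \cref{prop:mcd-t-core-quo} implies that in every residue class $c \pmod t$, the elements of $\beta(\lambda)$ form an initial arithmetic progression $\{c, c+t, \dots, c+(n_c-1)t\}$.

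I then set $S = \{\alpha_i \bmod t : 1 \leq i \leq r\} \subseteq \{0,\dots,t-1\}$, pair each residue with $c' := (-z-1-c) \bmod t$, and analyze each class in two cases. \emph{Case 1:} if $c \in S$, the $A$-elements above $m-1$ must cap the bottom progression, which forces the $\alpha_i \equiv c \pmod t$ to form a consecutive tail $\{c, c+t, \dots, c + p_c t\}$ and forbids any $D$-deletion in class $c$, giving $c' \notin S$. \emph{Case 2:} if $c \notin S$ but $c' \in S$, the $D$-deletions must form a top suffix of $\{c, c+t, \dots, c+(n-1)t\}$; tracking the largest deleted position shows this is possible only when
\[
c + c' \;=\; t - z - 1
\]
holds as an integer equation (not merely modulo $t$).

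The hypothesis $t \leq z+1$ gives $t - z - 1 \leq 0$ while $c+c' \geq 0$, so Case 2 is outright impossible when $t < z+1$, and pinned to the single instance $c = c' = 0$ (with $t = z+1$) when $t = z+1$. For $t < z+1$, combining $c \in S \Rightarrow c' \notin S$ (Case 1) with $c \notin S \Rightarrow c' \notin S$ (contrapositive of Case 2) gives $S = \emptyset$, contradicting $r \geq 1$. For $t = z+1$, Case 1 at $c = 0$ already yields $0 \notin S$, and the involution $c \mapsto t-c$ together with the same two-case analysis eliminates every nonzero residue from $S$, again forcing $S = \emptyset$.

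The main obstacle is the Case 2 bookkeeping: one has to verify that the block of deletions $\{m - z - 1 - (c' + jt)\}_{j=0}^{p_{c'}}$ sits at the very top of the bottom arithmetic progression in class $c$ precisely when $c + c' = t - z - 1$ as integers. Once this genuine (not modular) identity is in hand, the sign constraint from $t \leq z+1$ kills Case 2 almost immediately, and the borderline $t = z+1$ case unfolds mechanically from the $c \mapsto t-c$ symmetry.
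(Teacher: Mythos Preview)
Your argument is correct, but it is far more elaborate than necessary. You are essentially carrying out, for the regime $t\le z+1$, the same residue-class bookkeeping that the paper later develops (under the complementary hypothesis $0\le z\le t-2$) in \cref{lem:converse: sym}: splitting $\beta(\lambda)$ into the additions $A$ and deletions $D$ from \cref{lem:cond}, pairing residues $c\leftrightarrow c'=(-z-1-c)\bmod t$, and reading off the integer constraint $c+c'=t-z-1$ from the requirement that the deleted block be a top suffix. All of that checks out, including the fixed-point case $c=c'$ and the borderline $t=z+1$.

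The paper's own proof, by contrast, is a three-line hook computation: write out the explicit shape of $(\alpha\mid\alpha+z)$, observe that row $r+\alpha_r+z-t+1$ still has length $r$ when $z\ge t-1$, and compute that the hook at cell $(r+\alpha_r+z-t+1,\,r)$ equals $t$, contradicting the $t$-core hypothesis. No beta-set decomposition, no residue pairing, no case analysis. What the paper's approach buys is brevity and transparency; what yours buys is that it reuses exactly the machinery needed elsewhere in the section, so nothing is wasted if one is proving \cref{lem:converse: sym} anyway. For this lemma in isolation, though, the hook argument is much cleaner.
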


\begin{proof}
Let $\lambda=(\alpha|\beta) \in \mathcal{P}_{z}$ have rank $r > 0$.
Then 
\begin{equation*}
 \lambda=  (\alpha_1+1,\dots,\alpha_r+r,
 \underbrace{r,\dots,r}_{\alpha_r+z },
 \underbrace{r-1,\dots,r-1}_{\alpha_{r-1}-\alpha_r-1 },\dots,
 \underbrace{1,\dots,1}_{\alpha_1-\alpha_2-1}).
\end{equation*}
So, $\lambda_{r+i}=r$, $1 \leq i \leq \alpha_r+z$ and $\lambda'_r=\alpha_r+r+z$.
If $z \geq t-1$, then $\lambda_{r+\alpha_r+z-t+1}=r$. Hence, the hook number $h(r+\alpha_r+z-t+1,r)=(r)+(\alpha_r+r+z)-(r+\alpha_r+z-t+1)-(r)+1=t$, which is a contradiction since $\lambda$ is a $t$-core. So, $\lambda$ must be empty.
\end{proof}

Now we explain the constraints satisfied by $n_{i}(\lambda), 0 \leq i \leq t-1$, for a $z$-asymmetric $t$-core $\lambda$ of length at most $tn$.

\begin{lem} 
\label{lem:converse: sym}
Let $\lambda$ be a $t$-core of length at most $tn$ and $0 \leq z \leq t-2$. Then $\lambda \in \mathcal{P}_{z,t}$ if and only if
\begin{equation}
\begin{split}
  \label{n_{t-1}} 
    n_{i}(\lambda)+n_{t-z-1-i}(\lambda) =& 2n
    \quad \text{for} \quad  0 \leq i \leq  t-z-1, \\ 
    \text{and} \quad n_{i}(\lambda) =& n, \quad t-z \leq i \leq t-1.
\end{split}
\end{equation}
\end{lem}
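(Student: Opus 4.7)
The plan is to translate the characterization of $z$-asymmetry from condition (ii) of \cref{lem:cond} into conditions on the residue counts $n_i(\lambda)$, exploiting the explicit residue-wise structure of $\beta(\lambda)$ for a $t$-core supplied by \cref{prop:mcd-t-core-quo}(1). Setting $m=tn$ and $\phi(\xi)=2tn-z-1-\xi$, that condition reads: for every $\xi\in[0,tn-z-1]$, $\xi\in\beta(\lambda)\iff \phi(\xi)\notin\beta(\lambda)$; and for a $t$-core $\beta(\lambda)=\bigsqcup_{i=0}^{t-1}\{i,i+t,\ldots,i+(n_i-1)t\}$, so the involutory condition can be unpacked residue by residue. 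A short calculation with $r_i:=(t-z-1-i)\bmod t$ shows that $r_i$ stays inside $[0,t-z-1]$ when $i$ does and inside $[t-z,t-1]$ when $i$ does, and that
\[
\phi(i+kt)=\begin{cases} r_i+(2n-1-k)t, & 0\le i\le t-z-1,\\ r_i+(2n-2-k)t, & t-z\le i\le t-1,\end{cases}
\]
so the condition decouples into independent pair conditions on $\{i,r_i\}$ that do not mix the two residue ranges.

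For the $(\Leftarrow)$ direction, assuming the stated $n_i$-conditions I would verify condition (ii) of \cref{lem:cond} pair by pair. For $i\in[0,t-z-1]$ and $\xi=i+kt$ with $k\in[0,n-1]$, the condition reads ``$k<n_i \iff k\le 2n-1-n_{r_i}$,'' which holds identically once $n_i+n_{r_i}=2n$. For $i\in[t-z,t-1]$ and $\xi=i+kt$ with $k\in[0,n-2]$, both sides become automatic as soon as $n_i=n_{r_i}=n$.

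For the $(\Rightarrow)$ direction, assume $\lambda\in\mathcal{P}_{z,t}$. From condition (iii) of \cref{lem:cond} I would extract two ingredients: the inclusion $[tn-z,tn-1]\subseteq\beta(\lambda)$, which forces $n_i\ge n$ for each $i\in[t-z,t-1]$, and $\beta(\lambda)\subseteq[0,2tn-z-1]$, which combined with the $t$-core structure gives $n_i\le 2n-1$ for the same $i$ (settling $n=1$ immediately). For $n\ge 2$, feeding $\xi=i+(n-2)t$ into the pair condition squeezes $n_{r_i}\le n$, and together with $n_{r_i}\ge n$ yields $n_i=n_{r_i}=n$. For $i\in[0,t-z-1]$, sweeping the pair condition over all $k\in[0,n-1]$ produces the equality $\min(n_i,n)=\min(n,2n-n_{r_i})$; combining this with its symmetric counterpart ($i\leftrightarrow r_i$) and a short case split on whether $n_i\ge n$ or $n_i<n$ forces $n_i+n_{r_i}=2n$.

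The main care point will be the interface between the two residue ranges: the middle interval $[tn-z,tn-1]$ falls outside the domain of condition (ii) of \cref{lem:cond}, yet it automatically lies in $\beta(\lambda)$ by condition (iii), contributing exactly one ``free'' residue-$i$ element for each $i\in[t-z,t-1]$. That free contribution is precisely what converts the symmetric sum relation $n_i+n_{r_i}=2n$ into the equality $n_i=n$ in the upper range. Beyond this, the only residual subtleties are small cases like $n=1$ and residues fixed by $r$, both of which reduce to the same min-equality computations without new ideas.
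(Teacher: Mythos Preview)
Your proposal is correct, and the route is genuinely different from the paper's in the forward direction. One small point to tighten: when you run the min-equality argument for $i\in[0,t-z-1]$, the case split ``$n_i\ge n$ versus $n_i<n$'' only closes cleanly once you know $n_j\le 2n$ for $j\in[0,t-z-1]$ as well (otherwise the possibility $n_{r_i}>2n$, $n_i=0$ survives the two min-equalities without yielding $n_i+n_{r_i}=2n$). You already have the tool for this---the inclusion $\beta(\lambda)\subseteq[0,2tn-z-1]$---so just say that it gives $n_j\le 2n$ for \emph{every} residue $j$, not only for $j\in[t-z,t-1]$.

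As for the comparison: the paper's $(\Leftarrow)$ direction is essentially the same as yours (reconstruct $\beta(\lambda)$ from the $t$-core residue data and verify condition (ii) of \cref{lem:cond}). The forward direction is where the approaches diverge. The paper works with condition (iii) of \cref{lem:cond} and the Frobenius coordinates $\alpha_1>\cdots>\alpha_r$: it shows recursively, via the $t$-core ``missing $\Rightarrow$ missing${}+t$'' principle, that each deleted number $tn-z-1-\alpha_j$ has residue in $[0,t-z-1]$, and then tracks how each insertion/deletion pair shifts the counts $n_i$. Your argument bypasses the Frobenius data entirely: you feed the explicit $t$-core description $\beta(\lambda)=\bigsqcup_i\{i,i+t,\ldots,i+(n_i-1)t\}$ directly into condition (ii) and reduce everything to the single algebraic identity $\min(n_i,n)=\min(n,2n-n_{r_i})$. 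Your approach is more uniform (condition (ii) does all the work) and avoids the inductive bookkeeping on the $\alpha_j$'s; the paper's approach is more constructive and makes the rank interpretation $r=\sum (n_i-n)_+$ transparent along the way.
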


\begin{proof} 
Suppose $\lambda=(\alpha|\alpha+z)$ and $\rk(\lambda)=r$.
Using \cref{lem:cond}(3),   $\beta(\lambda)$ is obtained from the sequence $(\alpha_1+tn,\dots,\alpha_r+tn, tn-1,\dots,1,0$) by deleting the numbers 
$tn-z-1-\alpha_r > tn-z-1-\alpha_{r-1} > \dots > tn-z-1-\alpha_1$. 
Since $n_{i}(\emptyset,tn) = n$ for all $i$, \eqref{n_{t-1}} trivially holds for the empty partition.
Note that if $tn-z-1-\alpha_i \equiv \theta_i \pmod{t}$, then $\alpha_i+tn  \equiv t-z-1-\theta_i \pmod{t}$ for $i \in [r]$. In that case $n_{t-z-1-\theta_i}(\lambda)$ increases by one and $n_{\theta_i}(\lambda)$ decreases by one.
Therefore, it is sufficient to show that $\theta_i \in [0,t-z-1]$, for each $i \in [r]$ to prove \eqref{n_{t-1}}.

{We prove this successively in reverse order starting from $\theta_r$ and going all the way to $\theta_1$.} Since $\lambda$ is a $t$-core, if $tn-z-1-\alpha_r$ does not occur in $\beta(\lambda)$, then neither does $tn-z-1-\alpha_r+t$. 
Since $tn-z-1-\alpha_r$ is the largest number deleted  from $(tn-1,tn-2,\dots,0)$ to get $\beta(\lambda)$, $tn-z-1-\alpha_r+t \geq tn$. So,
$\alpha_r+z+1 \in [z+1,t]$; and $\theta_r \in [0,t-z-1]$.
There is nothing to show if $\theta_{r-1}=\theta_r$. So, 
assume $\theta_{r-1} \neq \theta_{r}$. Similarly, since $\lambda$ is a $t$-core, if $tn-z-1-\alpha_{r-1}$ does not occur in $\beta(\lambda)$, then neither does $tn-z-1-\alpha_{r-1}+t$. 
Since $tn-z-1-\alpha_{r-1}$ is the largest number congruent to $\theta_{r-1}$ deleted from $(tn-1,tn-2,\dots,0)$ to get $\beta(\lambda)$, 
$\alpha_{r-1}+z+1 \in [z+1,t]$ and $\theta_{r-1} \in [0,t-z-1]$.
Proceeding in this manner, $\theta_i \in [0,t-z-1]$ for all $i \in [r]$. 

Conversely, assume \eqref{n_{t-1}} holds for $\lambda$. 
If $\lambda$ is the empty partition, then it belongs to $\mathcal{P}_{z,t}$ vacuously. Now suppose $\lambda$ is non-empty and  $\{i_1,i_2,\dots,i_k\}_{>} 
\subset \{0,1,\dots,t-z-1\}$ 
such that $n_{i_j}(\lambda)>n$ which implies $n_{t-z-1-i_j}(\lambda)<n, j \in [k]$. 
Since $\lambda$ is a $t-$core, for each $j$,
$i_j+tn<i_j+t(n+1)<\dots<i_j+t(n_{i_j}(\lambda)-1)$
are the parts of  $\beta(\lambda)$ greater than and equal to $tn$.
If $n_{t-z-1-i_j}(\lambda)<n$ for $j \in [k]$ implies parts of $\beta(\lambda)$ less than and equal to $tn-1$ is obtained from the sequence $(tn-1,tn-2,\dots,0)$ by deleting the numbers
\[
tn-z-1-i_j, t(n-1)-z-1-i_j, \dots, t (n_{t-2-i_j}(\lambda)+1)-z-1-i_j.
\]
 Observe that an integer $\xi$ between $0$ and $tn-z-1$ occurs in $\beta(\lambda)$ if and only if $2tn-z-1-\xi$ does not. 
   So, by \cref{lem:cond}, $ \lambda \in \mathcal{P}_{z,t}$.
\end{proof}

\begin{cor}
\label{cor:symp-parts}
Let $t \geq 3$ and  $\lambda$ be a partition of length 
at most $tn$. Then $\core \lambda t$ is a symplectic $t$-core if and only if
$n_{i}(\lambda)+n_{t-2-i}(\lambda)=2n$ for $0 \leq i \leq \floor{\frac{t-2}{2}}$ and $n_{t-1}(\lambda)=n$.   
\end{cor}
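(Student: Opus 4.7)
The plan is to reduce this corollary to a direct application of \cref{lem:converse: sym} (taking $z=1$), using the fact recorded in \eqref{no-parts-partition=core} that removing a border strip of length $t$ corresponds to subtracting $t$ from a single entry of the beta set, so $n_{i}(\lambda,tn)=n_{i}(\core \lambda t,tn)$ for every $0\le i\le t-1$. Consequently the numerical condition on $\lambda$ in the statement is equivalent to the same condition on $\core \lambda t$, which lets us work directly with the $t$-core.

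By \cref{def:symp-core}, a symplectic $t$-core is exactly a $1$-asymmetric $t$-core, i.e.\ an element of $\mathcal{P}_{1,t}$. Since $t\geq 3$, the hypothesis $0\le z\le t-2$ of \cref{lem:converse: sym} is satisfied with $z=1$. Applying that lemma to $\core \lambda t$ yields: $\core \lambda t$ is a symplectic $t$-core if and only if
\[
n_{i}(\core \lambda t)+n_{t-2-i}(\core \lambda t)=2n \text{ for } 0\leq i\leq t-2, \quad\text{and}\quad n_{t-1}(\core \lambda t)=n.
\]
Translating back via \eqref{no-parts-partition=core} gives the same statement for $\lambda$.

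Finally I would observe that the pairing condition $n_{i}(\lambda)+n_{t-2-i}(\lambda)=2n$ is symmetric under $i\leftrightarrow t-2-i$, so imposing it for all $0\le i\le t-2$ is the same as imposing it for $0\le i\le \floor{(t-2)/2}$. When $t$ is even and $i=(t-2)/2$, the constraint becomes $2n_{(t-2)/2}(\lambda)=2n$, which is still expressed by the same formula. This gives precisely the form stated in the corollary.

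The argument is essentially bookkeeping: the only step that requires a brief check is that reducing to $\floor{(t-2)/2}$ captures every constraint (including the self-paired middle index when $t$ is even), and I expect no genuine obstacle beyond verifying that the index ranges line up correctly with $z=1$ in \cref{lem:converse: sym}.
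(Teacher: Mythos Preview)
Your proposal is correct and follows essentially the same approach as the paper: set $z=1$ in \cref{lem:converse: sym}, then transfer the condition between $\lambda$ and $\core\lambda t$ via \eqref{no-parts-partition=core}. The paper's proof is terser but identical in substance; you spell out the index-range symmetry reduction more explicitly, and the only detail you leave implicit (which the paper mentions) is that $\ell(\core\lambda t)\le\ell(\lambda)\le tn$, needed to apply the lemma to the core with the same ambient length $tn$.
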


\begin{proof} 
Set $z = 1$ in \cref{lem:converse: sym}.
This now follows using $\ell(\core \lambda t) \leq \ell(\lambda) \leq tn$ 
and \eqref{no-parts-partition=core} for $m=tn$.
\end{proof}

Since $\core \lambda {t}'=\core {\lambda'} t$~\cite[Example I.1(e)]{macdonald-2015}, it follows that $\core \lambda t$ is an orthogonal $t$-core 
if and only if $\core {\lambda'} t$ is a symplectic $t$-core. We then have the following corollary.

\begin{cor}
\label{cor:ocore}
Let $\lambda$ be a partition of length 
at most $tn$.
Then core$_t(\lambda)$ is an orthogonal $t$-core if and only if $n_{0}(\lambda)=n \text{ and } n_{i}(\lambda)+n_{t-i}(\lambda)=2n \text{ for } 1 \leq i \leq \floor{\frac{t}{2}}$.
\end{cor}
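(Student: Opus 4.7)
The plan is to deduce this from \cref{cor:symp-parts} by passing to the conjugate partition, exactly as the paragraph preceding the statement hints. Observe that $\lambda = (\alpha \mid \alpha-1)$ in Frobenius coordinates (i.e.\ $\lambda$ is orthogonal) if and only if $\lambda' = (\alpha - 1 \mid \alpha)$ is symplectic. Combined with the identity $\core{\lambda}{t}' = \core{\lambda'}{t}$ from \cite[Example I.1(e)]{macdonald-2015}, we get that $\core{\lambda}{t}$ is an orthogonal $t$-core if and only if $\core{\lambda'}{t}$ is a symplectic $t$-core. So it suffices to apply \cref{cor:symp-parts} to $\lambda'$ and translate the resulting conditions on $n_i(\lambda', \cdot)$ into conditions on $n_i(\lambda, tn)$ via \cref{lem:beta sets la la'}.

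Concretely, choose any integer $m$ large enough that $\ell(\lambda') \leq tm$ (such an $m$ exists because $\lambda$ is finite). By \cref{cor:symp-parts} applied to $\lambda'$ (with $tm$ in place of $tn$), $\core{\lambda'}{t}$ is a symplectic $t$-core if and only if
\[
n_i(\lambda', tm) + n_{t-2-i}(\lambda', tm) = 2m \quad \text{for } 0 \leq i \leq \floor{\tfrac{t-2}{2}}, \qquad n_{t-1}(\lambda', tm) = m.
\]
Now \cref{lem:beta sets la la'}, applied with $m_1 = n$ and $m_2 = m$, gives $n_j(\lambda, tn) + n_{t-1-j}(\lambda', tm) = n+m$ for each $0 \leq j \leq t-1$, so $n_j(\lambda', tm) = n + m - n_{t-1-j}(\lambda, tn)$.

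Substituting this into the symplectic conditions for $\lambda'$ and simplifying, the first family becomes $n_{t-1-i}(\lambda, tn) + n_{i+1}(\lambda, tn) = 2n$ for $0 \leq i \leq \floor{(t-2)/2}$; reindexing $j = i+1$ yields $n_j(\lambda) + n_{t-j}(\lambda) = 2n$ for $1 \leq j \leq \floor{t/2}$. The second condition becomes $n_0(\lambda, tn) = n$. This is exactly the asserted characterization. No step is a real obstacle --- the only thing to be careful about is that \cref{cor:symp-parts} must be invoked with a large enough ambient size $tm$ to accommodate $\ell(\lambda')$, so the translation via \cref{lem:beta sets la la'} is a matter of straightforward bookkeeping.
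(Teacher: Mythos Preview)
Your proof is correct and follows essentially the same approach as the paper's own proof: apply \cref{cor:symp-parts} to $\lambda'$ (after choosing $m$ with $\ell(\lambda')\le tm$) and translate via \cref{lem:beta sets la la'}. You have merely made the reindexing explicit where the paper leaves it to the reader.
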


\begin{proof}
Suppose $\ell(\lambda') \leq tm$, for some $m \geq 1$. 
Using \cref{cor:symp-parts} for $\lambda'$, $\core \lambda t$ is an orthogonal $t$-core if and only if 
$n_{t-1}(\lambda')=m$ and $n_{i}(\lambda')+n_{t-2-i}(\lambda')=2m$ for $0 \leq i \leq \floor{\frac{t-2}{2}}$.
Now using \cref{lem:beta sets la la'}, we get
the desired result.
\end{proof}

For completeness, we note the following property of the $t$-quotient of orthogonal and symplectic partitions, although we will not use it.

\begin{prop}[{\cite[Bijection 3]{garvan-kim-stanton-1990}}]
\label{prop:sym-iff} 
Let $\lambda$ be a partition. If
\begin{enumerate}
\item $\lambda^{(0)}$ is an orthogonal partition,
\item $\core \lambda t$ is an orthogonal $t$-core, and 
\item  $(\lambda^{(i)})'=\lambda^{(t-i)}$ for $\quad 1 \leq i \leq \floor{\frac{t}{2}}$,
\end{enumerate}
then $\lambda$ is orthogonal. A similar statement holds for symplectic partitions.
\end{prop}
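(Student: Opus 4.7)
The plan is to work entirely at the level of beta sets, using \cref{lem:cond} (in the obvious extension to $z=-1$) to convert orthogonality of $\lambda$ into combinatorial conditions on $\beta(\lambda,tn)$ and then decompose those conditions by residue modulo $t$. This reduces the problem to (a) an orthogonality condition on a single quotient component and (b) a conjugation condition between pairs of components, with residual numerical data that matches the $t$-core criterion of \cref{cor:ocore}.

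Concretely, fix $n$ so that $\ell(\lambda)\le tn$ and set $\beta=\beta(\lambda,tn)$. By \cref{lem:cond} with $z=-1$, $\lambda$ is orthogonal if and only if the involution $\xi\mapsto 2tn-\xi$ on $\{0,1,\dots,2tn\}$ exchanges $\beta$ with its complement. By \cref{prop:mcd-t-core-quo}, the parts of $\beta$ of residue $i\bmod t$ correspond bijectively to the beta numbers of $\lambda^{(i)}$ via $tk+i\leftrightarrow k$. Since the involution sends residue $i$ to residue $-i\equiv t-i\pmod t$, the orthogonality criterion decomposes into a self-paired condition on residue $0$ together with cross-paired conditions between residues $i$ and $t-i$ for $1\le i\le\floor{t/2}$.

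For the residue-$0$ piece, setting $\xi=tk$ with $k\in[0,2n]$, the condition reads $k\in\beta(\lambda^{(0)})\iff 2n-k\notin\beta(\lambda^{(0)})$. Cardinality consistency at the fixed point $k=n$ forces $n_0(\lambda)=n$, after which a second application of \cref{lem:cond} with $m=n$ and $z=-1$ identifies this with $\lambda^{(0)}$ being orthogonal. For the residue-$i$/residue-$(t-i)$ piece with $i\ge 1$, the condition becomes $k\in\beta(\lambda^{(i)})\iff 2n-1-k\notin\beta(\lambda^{(t-i)})$ for $k\in[0,2n-1]$. Cardinality consistency now forces $n_i(\lambda)+n_{t-i}(\lambda)=2n$, and then \cref{lem:perm} identifies the condition with $(\lambda^{(i)})'=\lambda^{(t-i)}$. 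Collecting the $n_i(\lambda)$ constraints and invoking \cref{cor:ocore} shows that these are equivalent to $\core\lambda t$ being an orthogonal $t$-core. Putting the three pieces together gives the three hypotheses as jointly equivalent to $\lambda$ being orthogonal.

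The symplectic case runs by the same argument with $z=+1$: the pairing involution is $\xi\mapsto 2tn-2-\xi$, sending residue $i$ to residue $t-2-i$; the self-paired residue is $t-1$ (yielding $n_{t-1}(\lambda)=n$ and symplecticity of $\lambda^{(t-1)}$), while cross pairings give $(\lambda^{(i)})'=\lambda^{(t-2-i)}$ for $0\le i\le\floor{(t-2)/2}$, and the $n_i$ constraints now match \cref{cor:symp-parts}. The main technical obstacle is the careful bookkeeping at the unique fixed point of the pairing---$\xi=tn$ in the orthogonal case and $\xi=tn-1$ in the symplectic case---where the naive reading of the beta-set criterion is degenerate; this fixed point is precisely what pins down the equality $n_0(\lambda)=n$ (respectively $n_{t-1}(\lambda)=n$) rather than a weaker inequality. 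A secondary subtlety is the necklace ambiguity of the $t$-quotient (\cref{rem:necklace}); I would fix $m=tn$ once and for all as the reference length so that the labeling of $\lambda^{(0)},\dots,\lambda^{(t-1)}$ is consistent with that used in \cref{cor:ocore} and \cref{cor:symp-parts}.
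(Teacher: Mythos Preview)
The paper does not actually give a proof of this proposition: it is stated with a citation to \cite{garvan-kim-stanton-1990}, prefaced by ``we note the following property \ldots\ although we will not use it,'' and no argument is supplied. So there is no paper proof to compare against.

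Your approach is correct and self-contained within the paper's toolkit. Translating orthogonality into the beta-set involution $\xi\mapsto 2tn-\xi$ (the $z=-1$ extension of \cref{lem:cond}), decomposing by residue modulo $t$, and then reading off the residue-$0$ piece as orthogonality of $\lambda^{(0)}$ via the same lemma at level $n$, the cross-residue pieces as conjugacy of quotient pairs via \cref{lem:perm}, and the residual cardinality constraints as the orthogonal-$t$-core criterion via \cref{cor:ocore}, is exactly the right decomposition. You in fact establish the two-way equivalence, slightly stronger than the one-way implication stated. Two small points worth making explicit in a write-up: first, to invoke \cref{lem:perm} you need $\lambda^{(i)}_1\le n_{t-i}(\lambda)$, which is guaranteed only after choosing $n$ large enough---harmless, since increasing $n$ by one increments every $n_i$ by one and leaves the quotient labeling fixed; second, the $z=-1$ variant of \cref{lem:cond} should be stated and checked on its own, since the paper's proof assumes $z\ge 0$, and the fixed point $\xi=tn$ must be handled by the separate observation that $tn\notin\beta(\lambda,tn)$ for orthogonal $\lambda$ (equivalently $\alpha_r\ge 1$), which you already flag.
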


We now see how to compute the rank of a $t$-core from its beta-set.

\begin{lem}
\label{lem:rank-core}
If $\lambda$ is a $t$-core of length at most $tn$, then  
\begin{equation}
\label{rnk}
 \rk(\lambda) =\sum_{i=0}^{t-1}(n_{i}(\lambda)-n)_+,
\end{equation}
where $z_+ := \max(z,0)$.
\end{lem}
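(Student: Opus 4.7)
The plan is to translate the Frobenius rank into a counting statement about the beta set of $\lambda$, and then use the explicit form of the beta set of a $t$-core supplied by \cref{prop:mcd-t-core-quo}(1) to evaluate that count residue class by residue class.

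First I would observe that for a partition $\lambda$ of length at most $tn$, one has $\beta_j(\lambda) = \lambda_j + tn - j \ge tn$ if and only if $\lambda_j \ge j$. Because the sequence $\beta(\lambda)$ is strictly decreasing, the indices $j$ with $\lambda_j \ge j$ form an initial segment, whose length is exactly $\rk(\lambda)$. Hence
\[
\rk(\lambda) \;=\; \#\{\, j \in [tn] : \beta_j(\lambda) \ge tn\,\}.
\]
Splitting this count by the residue of $\beta_j(\lambda)$ modulo $t$ gives $\rk(\lambda) = \sum_{i=0}^{t-1} N_i(\lambda)$, where $N_i(\lambda)$ counts the parts of $\beta(\lambda)$ that are congruent to $i \pmod t$ and are at least $tn$.

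Next I would use that $\lambda$ is a $t$-core. By \cref{prop:mcd-t-core-quo}(1), the parts of $\beta(\lambda)$ congruent to $i$ mod $t$ are exactly
\[
i,\; i+t,\; i+2t,\; \dots,\; i+(n_i(\lambda)-1)t,
\]
a block of $n_i(\lambda)$ consecutive multiples of $t$ shifted by $i$. For $0 \le i \le t-1$, the condition $i + jt \ge tn$ (with $j$ a nonnegative integer) is equivalent to $j \ge n$, since $(tn-i)/t \in (n-1, n]$. Therefore the number of such parts that meet the threshold $tn$ equals the number of $j \in \{0, 1, \dots, n_i(\lambda)-1\}$ with $j \ge n$, namely $\max(n_i(\lambda) - n, 0) = (n_i(\lambda)-n)_+$.

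Summing over $i$ yields
\[
\rk(\lambda) \;=\; \sum_{i=0}^{t-1} (n_i(\lambda) - n)_+,
\]
which is precisely \eqref{rnk}. The argument is essentially bookkeeping, so I do not anticipate a genuine obstacle; the only point that requires care is correctly handling the residue class $i = 0$ (where $jt \ge tn$ directly forces $j \ge n$) in the same uniform way as the residue classes $1 \le i \le t-1$, which the inequality $(tn-i)/t \in (n-1, n]$ automatically handles for all $i$ in the given range.
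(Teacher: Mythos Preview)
Your proof is correct and follows essentially the same approach as the paper: both identify $\rk(\lambda)$ with the number of parts of $\beta(\lambda)$ that are at least $tn$, and then use the explicit description of the beta set of a $t$-core from \cref{prop:mcd-t-core-quo}(1) to count these residue class by residue class. Your presentation is slightly more streamlined in that you establish the equality $\rk(\lambda)=\#\{j:\beta_j(\lambda)\ge tn\}$ at the outset, whereas the paper computes the count first and then verifies separately that $\lambda_r\ge r$ and $\lambda_{r+1}\le r$.
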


\begin{proof}
If $n_{i}(\lambda) = n$ for $0 \leq i \leq t-1$, then $\beta(\lambda)=(
tn-1,tn-2,\dots,1,0)$ which implies $\lambda$ is an empty partition. So, the result holds in this case.
Otherwise, assume $\{i_1,i_2,\dots,i_k\}_{>} \subset \{0,1,\dots,t-1\}$ such that $n_{i_j}(\lambda)>n \text{ for } 1 \leq j \leq k$.  
Since $\lambda$ is a $t$-core, 
\[
i_j+tn<i_j+t(n+1)<\dots<i_j+t(n_{i_j}(\lambda)-1)
\]
are the parts of $\beta(\lambda)$ greater than $tn-1$ for each $j$.  
If $r$ is the number of parts of  $\beta(\lambda)$ greater than  $tn-1$, then
\[ 
r= \sum_{j=1}^k(n_{i_j}(\lambda)-n)=\sum_{i=0}^{t-1}(n_{i}(\lambda)-n)_+.
\]
Moreover, $\beta_{r}(\lambda)$ is the smallest part of  $\beta(\lambda)$ greater than  $tn-1$ and is therefore equal to $i_k+tn$.
So, $\ds \lambda_{r}=\beta_{r}(\lambda)-(tn-r)=tn+i_k-(tn-r)=i_k+r \geq r$ and $\lambda_{r+1} \leq tn-1-(tn-r-1) \leq r$, which implies the rank of $\lambda$ is $r$.
\end{proof}

\cref{lem:rank-core} immediately tells us how to compute the rank of the $t$-core of a partition using \eqref{no-parts-partition=core}.

\begin{cor}
\label{cor:rank-tcore}
If $\lambda$ is a partition of length at most $tn$, then
\[
\rk(\core{\lambda}t)= \sum_{i=0}^{t-1}(n_{i}(\lambda)-n)_+.
\]
\end{cor}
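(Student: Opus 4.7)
The plan is to observe that this is an immediate consequence of \cref{lem:rank-core} combined with the identity \eqref{no-parts-partition=core}. First I would note that since $\ell(\lambda)\leq tn$ and the $t$-core is obtained by successively removing border strips of size $t$, we have $\ell(\core{\lambda}{t})\leq \ell(\lambda)\leq tn$, so it is legitimate to compute beta sets of $\core{\lambda}{t}$ with $m=tn$.

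Next I would apply \cref{lem:rank-core} to the partition $\core{\lambda}{t}$, which is a $t$-core of length at most $tn$, to obtain
\[
\rk(\core{\lambda}{t}) = \sum_{i=0}^{t-1}\bigl(n_{i}(\core{\lambda}{t},tn)-n\bigr)_+.
\]
Finally, I would invoke the identity \eqref{no-parts-partition=core}, which asserts that $n_i(\lambda,tn)=n_i(\core{\lambda}{t},tn)$ for $0\leq i\leq t-1$ (a consequence of the fact that removing a border strip of size $t$ just subtracts $t$ from one part of the beta set and reorders, hence preserves residues modulo $t$). Substituting this into the displayed equation yields the desired formula.

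There is no real obstacle here; the statement is a direct corollary and the argument is essentially a one-line substitution once \cref{lem:rank-core} and \eqref{no-parts-partition=core} are in hand.
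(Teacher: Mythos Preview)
Your proposal is correct and matches the paper's approach exactly: the paper states this corollary immediately after \cref{lem:rank-core} with only the remark that it follows from that lemma together with \eqref{no-parts-partition=core}, which is precisely the substitution you describe.
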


\cref{lem:rank-core} also gives us an algorithm to determine if a partition has empty $t$-core.

\begin{cor}
\label{cor:ary}
If $\lambda$ is a partition of length at most $tn$, then $\core \lambda t$ is {empty} if and only if $n_{i}(\lambda)=n$ for $0 \leq i \leq t-1$. 
\end{cor}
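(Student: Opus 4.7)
The plan is to deduce this corollary almost immediately from \cref{cor:rank-tcore} together with the observation that a partition has Frobenius rank zero if and only if it is empty.

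For the forward direction, I would argue directly from the beta set. If $\core{\lambda}t = \emptyset$, then taking $m = tn$ in the definition of the beta set gives $\beta(\emptyset, tn) = (tn-1, tn-2, \dots, 1, 0)$. Since the set $\{0, 1, \dots, tn-1\}$ contains exactly $n$ integers in each residue class modulo $t$, we have $n_i(\core{\lambda}{t}, tn) = n$ for every $0 \leq i \leq t-1$. Invoking \eqref{no-parts-partition=core} transfers this equality from the core to $\lambda$ itself, giving $n_i(\lambda) = n$ for all $i$.

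For the converse, suppose $n_i(\lambda) = n$ for each $0 \leq i \leq t-1$. Then \cref{cor:rank-tcore} yields
\[
\rk(\core{\lambda}{t}) = \sum_{i=0}^{t-1} (n_i(\lambda) - n)_+ = 0,
\]
and the only partition of Frobenius rank zero is the empty partition, so $\core{\lambda}{t} = \emptyset$.

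There is essentially no obstacle here: the corollary is a clean packaging of \cref{cor:rank-tcore}, since the rank formula is a sum of non-negative terms that vanishes precisely when each $n_i(\lambda) - n$ is non-positive, and the constraint $\sum_i n_i(\lambda) = tn$ (because $\beta(\lambda, tn)$ has exactly $tn$ elements) forces equality throughout. The only subtlety worth flagging is the use of \eqref{no-parts-partition=core} in the forward direction, ensuring that the residue counts for $\lambda$ and for $\core{\lambda}{t}$ coincide.
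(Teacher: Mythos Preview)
Your proof is correct and matches the paper's approach: the paper states this corollary as an immediate consequence of \cref{lem:rank-core} (via \cref{cor:rank-tcore}) together with \eqref{no-parts-partition=core}, which is exactly what you do. Your forward direction is slightly more explicit than necessary, but the converse via the rank formula is precisely the intended argument.
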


\begin{lem}
\label{lem:rank-sympcore-etc}
Let $\lambda$ be a partition of length 
at most $tn$.
\begin{enumerate}

\item If $\core \lambda t$ is a symplectic $t$-core, then 
\begin{equation}
\label{score1}
 \rk(\core \lambda t)=\ds \sum_{i=0}^{\floor{\frac{t-3}{2}}}|n_{i}(\lambda)-n|= \sum_{i=\floor{\frac{t-1}{2}}}^{t-2}|n_{i}(\lambda)-n|.  
\end{equation}

\item If $\core \lambda t$ is an orthogonal $t$-core, then
\begin{equation}
\label{ocore1}
\rk(\core \lambda t) = \sum_{i=1}^{\floor{\frac{t-1}{2}}}|n_{i}(\lambda)-n|. 
\end{equation}

\item If $\core \lambda t$ is self-conjugate $t$-core, then 
\begin{equation}
\label{con1} 
\rk(\core \lambda t)=\ds \sum_{i=0}^{\floor{\frac{t-2}{2}}}|n_{i}(\lambda)-n|.
\end{equation}

\end{enumerate}
\end{lem}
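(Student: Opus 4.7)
The plan is to start from \cref{cor:rank-tcore}, which gives the master formula
\[
\rk(\core{\lambda}t)=\sum_{i=0}^{t-1}\bigl(n_{i}(\lambda)-n\bigr)_+,
\]
and then, in each of the three cases, use the characterization of when $\core \lambda t$ has the prescribed symmetry to collapse this sum into a sum of absolute values over roughly half of the residue classes. The universal trick is the elementary identity: if $a+b=0$, then $a_+ + b_+ = |a|$; and if $a=0$, then $a_+=|a|=0$. I will apply this to pairs $(n_i(\lambda)-n, n_j(\lambda)-n)$ coming from the congruence constraints.

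For \cref{score1}, I invoke \cref{cor:symp-parts}, which says $n_{t-1}(\lambda)=n$ and $n_i(\lambda)+n_{t-2-i}(\lambda)=2n$ for $0\le i\le\floor{\frac{t-2}{2}}$. The term $i=t-1$ contributes $0$. For each pair $\{i,t-2-i\}$ with $i\neq t-2-i$, the two values $n_i(\lambda)-n$ and $n_{t-2-i}(\lambda)-n$ are negatives of each other, so together they contribute $|n_i(\lambda)-n|=|n_{t-2-i}(\lambda)-n|$. When $t$ is even, the middle index $i=(t-2)/2$ forces $n_{(t-2)/2}(\lambda)=n$ and contributes $0$. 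Summing over $i\in\{0,\dots,\floor{\frac{t-3}{2}}\}$ (one representative per pair) gives the first expression in \cref{score1}, and summing over $i\in\{\floor{\frac{t-1}{2}},\dots,t-2\}$ (the other representative) gives the second.

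For \cref{ocore1}, I use \cref{cor:ocore}: $n_0(\lambda)=n$ and $n_i(\lambda)+n_{t-i}(\lambda)=2n$ for $1\le i\le\floor{\frac{t}{2}}$. The $i=0$ term contributes $0$, the middle index $i=t/2$ (when $t$ is even) again forces $n_{t/2}(\lambda)=n$ and contributes $0$, and the remaining pairs $\{i,t-i\}$ each contribute $|n_i(\lambda)-n|$; summing over $i\in\{1,\dots,\floor{\frac{t-1}{2}}\}$ yields the claim. For \cref{con1}, I use \cref{cor:con}: $n_i(\lambda)+n_{t-1-i}(\lambda)=2n$ for $0\le i\le\floor{\frac{t-1}{2}}$; when $t$ is odd the middle index $(t-1)/2$ contributes $0$, and pairing gives $\sum_{i=0}^{\floor{(t-2)/2}}|n_i(\lambda)-n|$.

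There is no real obstacle here: once the pairing structure of the congruence constraints is laid out, the computation is entirely mechanical. The only minor subtlety is bookkeeping between the two equivalent forms in \cref{score1}, which amounts to choosing one representative from each pair $\{i,t-2-i\}$ in two different ways (the ``small'' half $\{0,\dots,\floor{\frac{t-3}{2}}\}$ versus the ``large'' half $\{\floor{\frac{t-1}{2}},\dots,t-2\}$), and observing that both representatives give the same absolute value.
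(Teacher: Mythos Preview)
Your proof is correct and follows essentially the same approach as the paper: start from the master formula of \cref{cor:rank-tcore}, invoke the appropriate congruence constraints (\cref{cor:symp-parts}, \cref{cor:ocore}, \cref{cor:con}), and pair the residue classes so that each pair $\{i,j\}$ with $n_i(\lambda)+n_j(\lambda)=2n$ contributes a single absolute value. Your explicit framing via the identity $a_+ + b_+ = |a|$ when $a+b=0$ is a slightly cleaner way of phrasing the same computation the paper carries out case by case.
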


\begin{proof} Using \cref{cor:rank-tcore}, 
\begin{equation*}
 \rk(\text{core}_t(\lambda)) = \sum_{i = 0}^{t-1} (n_{i}(\lambda)-n)_+. \end{equation*}
If $\core \lambda t$ is a symplectic $t$-core, then by \cref{cor:symp-parts}, 
\begin{equation*}
n_{t-1}(\lambda)=n \text{ and }
    n_{i}(\lambda)+n_{t-2-i}(\lambda)=2n \text{ for } 0 \leq i \leq \floor{\frac{t-2}{2}}.
\end{equation*}
If $n_{i}(\lambda)>n$ for some $i \in \{\floor{\frac{t-3}{2}}+1,\floor{\frac{t-3}{2}}+2,\dots,t-2\}$, then $n_{t-2-i}(\lambda)<n$ and $n_{i}(\lambda)-n=n-n_{t-2-i}(\lambda)$. Since $t-2-i \in \{0,1,\dots,\floor{\frac{t-3}{2}}\}$,
\[
\rk(\text{core}_t(\lambda))=\ds \sum_{p=0}^{\floor{\frac{t-3}{2}}}|n_{p}(\lambda)-n|.
\]
Using an argument analogous to the one just given as well as \cref{cor:ocore} and \cref{cor:con}, the proofs of \eqref{ocore1} and \eqref{con1} follow.
\end{proof}

\subsection{Determinant evaluations}
\label{sec:det}

Here, we will derive all the determinant evaluations we need to prove our character identities. We will state them in the most general form possible.

Let $\lambda$ be a partition with $\ell(\lambda) \leq tn$.
Recall for $0 \leq p \leq t-1$,
$\beta_j^{(p)}(\lambda), 
1 \leq j \leq n_{p}(\lambda)$
are the parts of $\beta(\lambda)$ congruent to 
$p$ modulo $t$, arranged in decreasing order.
In addition, for $q \in \mathbb{Z} \cup (\mathbb{Z}+1/2)$,
define $n \times n_{p}(\lambda)$ matrices
\begin{equation}
\label{def AB}
A^{\lambda}_{p,q} 
=\left(
{x}_i^{\beta^{(p)}_j({\lambda})+q}
\right)_{\substack{1 \leq i \leq n\\1 \leq j \leq n_{p}(\lambda)}}, 
\quad 
\bar{A}^{\lambda}_{p,q}
= \left(
\bar{x}_i^{\beta_j^{(p)}(\lambda)+q}
\right)_{\substack{1 \leq i\leq n \\ 1\leq j\leq n_{p}(\lambda)}}.
\end{equation}
The corresponding matrices for the empty partition are denoted by
\begin{equation}
\label{def AB0}
A_{p,q}= \left(
x_i^{t(n-j)+p+q}
\right)_{1 \leq i,j\leq n }, 
\quad 
\bar{A}_{p,q}=\left(
\bar{x}_i^{t(n-j)+p+q}
\right)_{1 \leq i,j\leq n }.
\end{equation}
In all cases, whenever $q=0$, we will omit it. For example, we will write $A^{\lambda}_{p}$ instead of $A^{\lambda}_{p,0}$.
Recall that the $t$-quotient of $\lambda$ is given by
$\quo \lambda t = (\lambda^{(0)},\dots, \lambda^{(t-1)})$
and $n_{p}(\lambda) \leq n$ for $0 \leq p \leq t-1$. 
Then, using \cref{prop:mcd-t-core-quo}(2),
\[
t\beta_j(\lambda^{(p)})=\beta_j^{(p)}(\lambda)-p, \quad 1\leq j\leq n,
\]
we write down alternate formulas for the classical characters.
Recall that $X^t = (x_1^t,\dots,x_n^t)$.
Using this notation, {we see that} the Schur polynomial is given by
\begin{equation}
\label{snew}
s_{\lambda^{(p)}}(X^t)
  = \frac{\det A^{\lambda}_{p}}
  {\det A_{p}},
  \end{equation}
the symplectic character is given by
\begin{equation}
\label{sp-new}
\sp_{\lambda^{(p)}}(X^t)
  = \frac{\det \left(A^{\lambda}_{p,t-p}-\bar{A}^{\lambda}_{p,t-p}\right)}
  {\det \left(A_{p,t-p}-\bar{A}_{p,t-p}\right)},
  \end{equation}
the odd orthogonal character is given by
\begin{equation}
\label{oo-new}
 \oo_{\lambda^{(p)}}(X^t)
 =\frac{\det \left(A^{\lambda}_{p,\frac{t}{2}-p}-\bar{A}^{\lambda}_{p,\frac{t}{2}-p}\right)}
 {\det \left(A_{p,\frac{t}{2}-p}-\bar{A}_{p,\frac{t}{2}-p}\right)},   
\end{equation}
and the even orthogonal character is given by
\begin{equation}
\label{eo-new}
  \oe_{\lambda^{(p)}}(X^t)
  =\frac{ {2} \det \left(A^{\lambda}_{p,-p}+\bar{A}^{\lambda}_{p,-p}\right)}
  {{(1+\delta_{\lambda_n^{(p)},0})}\det \left(A_{p,-p}+\bar{A}_{p,-p}\right)},
\end{equation}
using \eqref{spdef}, \eqref{oodef} and \eqref{oedef} respectively.

We first express the Schur function of partitions of length at most $2n$ in
the variables $X^t \cup X^{-t}$ occuring in our theorems in this notation.

\begin{lem}
\label{lem:s-new}
Let $\lambda$ be a partition of length at most $tn$ with 
$\quo \lambda t = (\lambda^{(0)},\dots,\lambda^{(t-1)})$. 
If $p,q \in \{0,1,\dots,t-1\}$ such that 
$n_{p}(\lambda)+n_{q}(\lambda)=2n$, then we define
$\rho_{p,q}=\lambda^{(p)}_1 + (\lambda^{(q)}, 0,\dots,0, -\rev(\lambda^{(p)}))$,
where we pad $0'$s in the middle so that $\rho_{p,q}$ is of length $2n$. 
Then the Schur function $s_{\rho_{p,q}}(X^t,{\X}^t)$ can be written as
\begin{equation}
\label{lemmaa}
 s_{\rho_{p,q}}(X^t,{\X}^t)
 = \frac{(-1)^{\frac{n_{p}(\lambda)(n_{p}(\lambda)-1)}{2}} }
 {(-1)^{\frac{n(n-1)}{2}} }
\frac{  \det \left( \begin{array}{c|c}
   A^{\lambda}_{q,-q}  & \bar{A}^{\lambda}_{p,t-p} \\[0.2cm]
   \hline \\[-0.3cm]
   \bar{A}^{\lambda}_{q,-q}  & A^{\lambda}_{p,t-p}
\end{array}\right)}
{\det \left( \begin{array}{c|c}
   A_{q,-q}  & \bar{A}_{p,t-p} \\[0.2cm]
   \hline \\[-0.3cm]
   \bar{A}_{q,-q}  & A_{p,t-p}
\end{array}\right)}.
\end{equation}
\end{lem}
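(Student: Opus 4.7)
The plan is to start from the bialternant form of the Schur polynomial in $2n$ variables. Setting $Y_i = x_i^t$ for $1 \le i \le n$ and $Y_{n+i} = \bar{x}_i^t$ for $1 \le i \le n$, one has
\begin{equation*}
s_{\rho_{p,q}}(X^t, \bar{X}^t) = \frac{\det_{1 \le i, j \le 2n}\bigl(Y_i^{\beta_j(\rho_{p,q}, 2n)}\bigr)}{\det_{1 \le i, j \le 2n}\bigl(Y_i^{2n-j}\bigr)}.
\end{equation*}
My goal is to rewrite each of the numerator and denominator as a single block determinant of the form appearing in~\eqref{lemmaa}, up to an explicit sign coming from a column permutation.

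The key calculation is a piecewise formula for $t\,\beta_j(\rho_{p,q}, 2n)$. Set $C = t(\lambda^{(p)}_1 + n_p(\lambda))$. Using the identity $\beta^{(r)}_j(\lambda) = t(\lambda^{(r)}_j + n_r(\lambda) - j) + r$ from \cref{prop:mcd-t-core-quo}(2), together with the hypothesis $n_p(\lambda) + n_q(\lambda) = 2n$, a direct computation gives
\begin{equation*}
t\,\beta_j(\rho_{p,q}, 2n) = \bigl(\beta^{(q)}_j(\lambda) - q\bigr) + C
\end{equation*}
for $1 \le j \le n_q(\lambda)$, and
\begin{equation*}
t\,\beta_j(\rho_{p,q}, 2n) = C - \bigl(\beta^{(p)}_{n_p(\lambda)-k+1}(\lambda) + t - p\bigr)
\end{equation*}
for $j = n_q(\lambda) + k$ with $1 \le k \le n_p(\lambda)$.

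I would then exploit the identity $\prod_{i=1}^n (x_i \bar{x}_i)^{C} = 1$ to strip the shift $C$ off every row of the numerator matrix without changing the determinant. Afterwards, for $i \le n$, the entries become $x_i^{\beta^{(q)}_j(\lambda) - q}$ in the first $n_q(\lambda)$ columns and $\bar{x}_i^{\beta^{(p)}_{n_p(\lambda)-k+1}(\lambda) + t - p}$ in the last $n_p(\lambda)$ columns, with the roles of $x_i$ and $\bar{x}_i$ swapped for $i > n$. Apart from the last $n_p(\lambda)$ columns appearing in reverse order, this is precisely the $2n \times 2n$ block matrix $M^{\lambda}$ in the numerator of~\eqref{lemmaa}; reversing those columns contributes a sign $(-1)^{n_p(\lambda)(n_p(\lambda)-1)/2}$. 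Running the same argument with $\lambda$ replaced by the empty partition (so $\lambda^{(p)}_1 = 0$, $n_p(\emptyset) = n_q(\emptyset) = n$, $C = tn$, and $\rho_{p,q}$ collapses to the empty partition of length at most $2n$) yields $\det(Y_i^{2n-j}) = (-1)^{n(n-1)/2}\det M$. Dividing the two expressions produces~\eqref{lemmaa}.

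The most delicate step will be verifying the two piecewise formulas for $t\,\beta_j(\rho_{p,q},2n)$ and carefully tracking the column reversal and the resulting sign. The structural input $n_p(\lambda) + n_q(\lambda) = 2n$ is exactly what forces the per-row shift $C$ to be the same in both column blocks, so that it can be factored out as a global scalar equal to $1$ and the reduced matrix assembles cleanly into the advertised block form.
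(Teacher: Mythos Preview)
Your proposal is correct and is essentially the same argument as the paper's proof. The paper multiplies row $i$ of the top blocks by $\bar{x}_i^{\,t(\lambda^{(p)}_1+n_p(\lambda))}$ and row $i$ of the bottom blocks by $x_i^{\,t(\lambda^{(p)}_1+n_p(\lambda))}$ (your shift $C$), then reverses the last $n_p(\lambda)$ columns to pick up the sign $(-1)^{n_p(\lambda)(n_p(\lambda)-1)/2}$, and finally specializes to $\lambda=\emptyset$ for the denominator; your explicit piecewise computation of $t\,\beta_j(\rho_{p,q},2n)$ simply unpacks the same row/column manipulations in advance.
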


\begin{proof}
We will think of the first $n_{q}(\lambda)$ components of $\rho_{p,q}$ as coming from $\lambda^{(q)}$ and the remaining as coming from $\lambda^{(p)}$.
Using the Schur polynomial expression \eqref{gldef}, {we see that}
the numerator of $s_{\rho_{p,q}}(X^t,{\X}^t)$ is
\[
\det \left(\begin{array}{c|c}
  \left(
  x_i^{t(\lambda^{(p)}_1+\lambda^{(q)}_j+2n-j)}
  \right)_{\substack{1 \leq i \leq n\\1 \leq j \leq n_{q}(\lambda)}}  
  & \left(
  x_i^{t(\lambda^{(p)}_1-\lambda^{(p)}_{2n+1-j}+2n-j)}
  \right)_{\substack{1 \leq i \leq n\\ n_{q}(\lambda)+1 \leq j \leq 2n}}  \\\\\hline\\
   \left(\bar{x}_i^{t(\lambda^{(p)}_1+\lambda^{(q)}_j+2n-j)}\right)_{\substack{1 \leq i \leq n\\1 \leq j \leq n_{q}(\lambda)}}  & \left(\bar{x}_i^{t(\lambda^{(p)}_1-\lambda^{(p)}_{2n+1-j}+2n-j)}\right)_{\substack{1 \leq i \leq n\\n_{q}(\lambda)+1 \leq j \leq 2n}} 
\end{array}
\right). \]
Multiplying row $i$ in the top blocks and bottom blocks of the numerator by  $\bar{x}_i^{t(\lambda_1^{(p)}+n_{p}(\lambda))}$ 
and 
${x}_i^{t(\lambda_1^{(p)}+n_{p}(\lambda))}$ respectively, 
for each $i=1,2,\dots,n$ and then reversing the last $n_{p}(\lambda)$ columns, {we see that} the numerator equals
\begin{equation}
    \begin{split}
     \label{num-s}
(-1)^{\frac{n_{p}(\lambda)(n_{p}(\lambda)-1)}{2}} \det \left(
\begin{array}{c|c}
  \left(
  x_i^{\beta^{(q)}_j(\lambda)-q}
  \right)_{\substack{1 \leq i \leq n\\1 \leq j \leq n_{q}(\lambda)}}   & \left(
  \bar{x}_i^{\beta^{(p)}_j(\lambda)-p+t}
  \right)_{\substack{1 \leq i \leq n\\1 \leq j \leq n_{p}(\lambda)}}  
  \\\\\hline\\
   \left(
   \bar{x}_i^{\beta^{(q)}_j(\lambda)-q}
   \right)_{\substack{1 \leq i \leq n\\1 \leq j \leq n_{q}(\lambda)}}  & \left(
   {x}_i^{\beta^{(p)}_j(\lambda)-p+t}
   \right)_{\substack{1 \leq i \leq n\\1 \leq j \leq n_{p}(\lambda)}} 
\end{array}
\right) \\ 
=  (-1)^{\frac{n_{p}(\lambda)(n_{p}(\lambda)-1)}{2}}
\det \left( \begin{array}{c|c}
   A^{\lambda}_{q,-q}  & \bar{A}^{\lambda}_{p,t-p} \\[0.2cm]
   \hline \\[-0.3cm]
   \bar{A}^{\lambda}_{q,-q}  & A^{\lambda}_{p,t-p}
\end{array}\right).     
    \end{split}
\end{equation}
Since $n_{p}(\emptyset,tn)) = n_{q}(\emptyset,tn)) = n$ and the denominator in the expression \eqref{gldef} is the same as its numerator evaluated at the empty partition, we see that the denominator is
\[
(-1)^{\frac{n(n-1)}{2}}  \det \left( \begin{array}{c|c}
   A^{\empty}_{q,-q}  & \bar{A}_{p,t-p} \\[0.2cm]
   \hline \\[-0.3cm]
   \bar{A}_{q,-q}  & A_{p,t-p}
\end{array}\right).
\]
Hence, \eqref{lemmaa} holds.  
\end{proof}

The next result shows that the role of $p$ and $q$ in these kind of Schur evaluations can be interchanged.

\begin{lem} 
Using the same notation as in \cref{lem:s-new}, {we see that}
\[ 
s_{\rho_{p,q}}(X,{\X})=s_{\rho_{q,p}}(X,{\X}).
\]
\end{lem}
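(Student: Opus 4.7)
The plan is to identify $\rho_{q,p}$ as the rectangular complement of $\rho_{p,q}$ inside a $c\times 2n$ box, where $c := (\rho_{p,q})_1 = \lambda^{(p)}_1+\lambda^{(q)}_1$, and then to invoke the classical Schur duality
\[
s_{c^m-\rev(\mu)}(y_1,\dots,y_m) \;=\; (y_1\cdots y_m)^{c}\,s_\mu(y_1^{-1},\dots,y_m^{-1}),
\]
valid for any partition $\mu$ with $\ell(\mu)\le m$ and $\mu_1\le c$. This identity follows in one line from the bialternant formula by factoring $y_i^{-\mu_1-m+1}$ out of row $i$ of the numerator of $s_\mu(y^{-1})$ and then reversing the column order.

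First I would verify the combinatorial claim $\rho_{q,p}=c^{2n}-\rev(\rho_{p,q})$ by direct bookkeeping. Writing $\rho_{p,q}$ out from its definition, its first $n_q(\lambda)$ entries are $\lambda^{(p)}_1+\lambda^{(q)}_j$ for $1\le j\le n_q(\lambda)$, and its last $n_p(\lambda)$ entries are $\lambda^{(p)}_1-\lambda^{(p)}_{n_p(\lambda)+1-k}$ for $1\le k\le n_p(\lambda)$. Reversing this list and subtracting each entry from $c$ produces first $n_p(\lambda)$ entries $\lambda^{(q)}_1+\lambda^{(p)}_j$, then $n_q(\lambda)$ entries $\lambda^{(q)}_1-\lambda^{(q)}_{n_q(\lambda)+1-k}$, which is precisely $\rho_{q,p}=\lambda^{(q)}_1+(\lambda^{(p)},-\rev(\lambda^{(q)}))$.

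Then I would apply the displayed identity with $m=2n$, $\mu=\rho_{p,q}$, and $(y_1,\dots,y_{2n})=(X,\bar X)$. Two features collapse the statement: (i) $\prod_{i=1}^n x_i\bar x_i=1$, so the monomial prefactor is trivial; (ii) $(y_1^{-1},\dots,y_{2n}^{-1})=(\bar X,X)$ is a permutation of $(X,\bar X)$, so full symmetry of the Schur polynomial gives $s_{\rho_{p,q}}(\bar X,X)=s_{\rho_{p,q}}(X,\bar X)$. Combining these observations with the complement identity yields $s_{\rho_{q,p}}(X,\bar X)=s_{\rho_{p,q}}(X,\bar X)$.

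The only real obstacle is the careful index-tracking in the combinatorial identification of $\rho_{q,p}$ with the rectangular complement of $\rho_{p,q}$; the Schur-side manipulation is formal. Edge cases where $n_p(\lambda)=0$ or $n_q(\lambda)=0$ (so that one of $\lambda^{(p)}_1,\lambda^{(q)}_1$ is a maximum of the empty partition) cause no trouble under the convention that this maximum is $0$, since then one of the two partitions degenerates and both sides of the claim reduce to the same evaluation.
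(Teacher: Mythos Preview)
Your proof is correct and takes a genuinely different route from the paper. The paper argues directly from the block--determinant expression of \cref{lem:s-new}: it writes down an explicit matrix identity
\[
\left(\begin{array}{c|c} 0 & \bar x_i^{\,t} I_n \\ \hline x_i^{\,t} I_n & 0 \end{array}\right)
\left(\begin{array}{c|c} A^{\lambda}_{q,-q} & \bar A^{\lambda}_{p,t-p} \\ \hline \bar A^{\lambda}_{q,-q} & A^{\lambda}_{p,t-p} \end{array}\right)
\left(\begin{array}{c|c} 0 & I_{n_q(\lambda)} \\ \hline I_{n_p(\lambda)} & 0 \end{array}\right)
=
\left(\begin{array}{c|c} A^{\lambda}_{p,-p} & \bar A^{\lambda}_{q,t-q} \\ \hline \bar A^{\lambda}_{p,-p} & A^{\lambda}_{q,t-q} \end{array}\right),
\]
takes determinants, and checks that the sign $(-1)^{n^2 + n_p(\lambda)n_q(\lambda)}$ together with the $\binom{n_p(\lambda)}{2}$ and $\binom{n_q(\lambda)}{2}$ factors from \cref{lem:s-new} combine to $+1$.

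Your argument instead recognises $\rho_{q,p}$ as the rectangular complement $c^{2n}-\rev(\rho_{p,q})$ with $c=\lambda^{(p)}_1+\lambda^{(q)}_1$, and then invokes the standard duality $s_{c^m-\rev(\mu)}(y)=(y_1\cdots y_m)^c\,s_\mu(y^{-1})$, which collapses for $y=(X,\bar X)$ because the product of the variables is $1$ and the variable set is invariant under inversion. This is cleaner and more conceptual: it explains the $p\leftrightarrow q$ symmetry as an instance of a general Schur phenomenon rather than as a determinantal coincidence, and it does not depend on the specific block formula of \cref{lem:s-new}. The paper's approach, on the other hand, has the virtue of staying entirely within the matrix machinery already set up for the main factorization theorems, so no outside identity needs to be invoked.
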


\begin{proof} 
Since ${\x}_i^{t} A^{\lambda}_{p,t-p} = A^{\lambda}_{p,-p}$ and ${x}_i^{t} \bar{A}^{\lambda}_{p,t-p} = \bar{A}^{\lambda}_{p,-p}$, we observe
\[
\left( \begin{array}{c|c}
   0 & {\x}_i^{t} I_n  \\
   \hline
{x}_i^{t} I_{n}  & 0
\end{array}\right) 
\left( \begin{array}{c|c}
   A^{\lambda}_{q,-q}  & \bar{A}^{\lambda}_{p,t-p} \\[0.2cm]
   \hline \\[-0.3cm]
   \bar{A}^{\lambda}_{q,-q}  & A^{\lambda}_{p,t-p}
\end{array}\right)
\left(\begin{array}{c|c}
   0  & I_{n_q(\lambda)} \\
   \hline
    I_{n_p(\lambda)}  & 0
\end{array}\right) = \left( \begin{array}{c|c}
   A^{\lambda}_{p,-p}  & \bar{A}^{\lambda}_{q,t-q} \\[0.2cm]
   \hline \\[-0.3cm]
   \bar{A}^{\lambda}_{p,-p}  & A^{\lambda}_{q,t-q}
\end{array}\right),
\]
where $I_m$ is the $m \times m$ identity matrix.
Evaluating the determinant on both sides, 
\[
(-1)^{n^2} \det \left( \begin{array}{c|c}
   A^{\lambda}_{q,-q}  & \bar{A}^{\lambda}_{p,t-p} \\[0.2cm]
   \hline \\[-0.3cm]
   \bar{A}^{\lambda}_{q,-q}  & A^{\lambda}_{p,t-p}
\end{array}\right) (-1)^{n_p(\lambda) n_q(\lambda)}=\det \left( \begin{array}{c|c}
   A^{\lambda}_{p,-p}  & \bar{A}^{\lambda}_{q,t-q} \\[0.2cm]
   \hline \\[-0.3cm]
   \bar{A}^{\lambda}_{p,-p}  & A^{\lambda}_{q,t-q}
\end{array}\right).
\]
Since
\[
n^2+\frac{n_p(\lambda)(n_p(\lambda)-1)}{2}+n_p(\lambda)n_q(\lambda)+\frac{n_q(\lambda)(n_q(\lambda)-1)}{2}
=n^2+2n^2-n = n(n-1)
\] 
is even, the sign cancels, and $p$ and $q$ can be interchanged.
\end{proof}

The remaining results in this section deal with determinants of block matrices, which will prove useful in evaluating the other classical characters.
We note that we have not found our identities in Krattenthaler's treatises~\cite{kratt-1999,kratt-2005}.

\begin{lem}
\label{lem:matrix}
For $i = 1,\dots, k$, let $T_i$ be matrices of order $\ell_i \times m_i$ such that $\ell_1 + \cdots + \ell_k = m_1 + \cdots + m_k = d$. Define block-diagonal and block-antidiagonal matrices
\[
U \coloneqq \left(
\begin{array}{ccccc}
T_1 &&& \\
 & T_2  & & \text{\huge0}\\
 && \ddots \\
  \text{\huge0} &    &   & T_k
\end{array}
\right)
\quad
\text{and}
\quad
V \coloneqq \left(
\begin{array}{ccccc}
 &&& T_1 \\
\text{\huge0} &   & T_2 & \\
 & \iddots &&\\
 T_k  &    &   & \text{\huge0}
\end{array}
\right).
\]
Then
\[
\det(U) = (-1)^{ \sum_{1 \leq i < j \leq k } m_im_j} \det(V) = 
\begin{cases}
 0 & \text{if } \ell_i \neq m_i \text{ for some } i,\\
\ds \prod_{i=1}^k \det(T_i) & \text{otherwise}.
\end{cases}
\]
\end{lem}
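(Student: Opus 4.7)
I would handle the two assertions separately: first compute $\det(U)$ for the block-diagonal matrix, and then relate $\det(V)$ to $\det(U)$ by a column permutation.

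For $\det(U)$, if $\ell_i = m_i$ for all $i$, the standard block-triangular determinant formula (proved by induction on $k$, or by Laplace expansion along the first $\ell_1$ rows) gives $\det(U) = \prod_i \det(T_i)$. If instead some $\ell_i \neq m_i$, I would argue that $\det(U) = 0$ by a rank argument. Since $\sum_i \ell_i = \sum_i m_i$, there must exist a smallest index $j \in [k]$ with $\ell_1 + \cdots + \ell_j > m_1 + \cdots + m_j$ (if no such $j$ exists, then the partial sums satisfy $\ell_1 + \cdots + \ell_j \le m_1 + \cdots + m_j$ for every $j$, and equality at $j=k$ forces $\ell_i = m_i$ for all $i$). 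The first $\ell_1 + \cdots + \ell_j$ rows of $U$ are supported in the first $m_1 + \cdots + m_j$ columns, so they span a subspace of dimension at most $m_1 + \cdots + m_j$ and are therefore linearly dependent, forcing $\det(U) = 0$.

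To relate $\det(V)$ and $\det(U)$, observe that $V$ is obtained from $U$ by reversing the order of the $k$ column-blocks (block $i$ moves to position $k+1-i$) while keeping the rows fixed. This column-block reversal decomposes into transpositions of individual columns: two columns change their relative order exactly when they lie in distinct blocks $i < j$, and the number of such pairs is $m_i m_j$. The total sign of the permutation is therefore $(-1)^{\sum_{1 \le i < j \le k} m_i m_j}$, which gives $\det(V) = (-1)^{\sum_{i<j} m_i m_j} \det(U)$. Combining this with the previous paragraph yields the full statement.

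No step poses a serious obstacle; this is essentially bookkeeping. The one place where care is required is the sign count: one has to consistently orient the column-block reversal and make sure the inversion count is indeed $\sum_{i<j} m_i m_j$ rather than $\sum_{i \le j} m_i m_j$ or some similar variant. A quick verification in the $k = 2$ and $k = 3$ cases, $(-1)^{m_1 m_2}$ and $(-1)^{m_1 m_2 + m_1 m_3 + m_2 m_3}$ respectively, pins this down.
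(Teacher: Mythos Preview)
Your argument has a small but genuine gap in the non-square case. You claim that if some $\ell_i \neq m_i$ then there is a smallest $j$ with $\ell_1+\cdots+\ell_j > m_1+\cdots+m_j$, justifying this by saying that if all partial sums satisfy $\le$ then equality at $j=k$ forces $\ell_i=m_i$ for all $i$. That implication is false: take $k=2$, $(\ell_1,\ell_2)=(1,3)$, $(m_1,m_2)=(2,2)$. Here $1\le 2$ and $4=4$, so no partial sum of the $\ell$'s exceeds the corresponding one for the $m$'s, yet $\ell_1\neq m_1$. The fix is easy: if some $\ell_i\neq m_i$ then some partial sum differs, and whichever direction the inequality goes you get linear dependence (if $\sum_{i\le j}\ell_i>\sum_{i\le j}m_i$ the first $\sum_{i\le j}\ell_i$ rows are supported on too few columns; if $<$, the last $\sum_{i>j}\ell_i$ rows are). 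With this patch your proof is complete.

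For comparison, the paper handles the non-square case differently and avoids this case split: it observes that $U^TU$ is block-diagonal with blocks $T_i^TT_i$ of size $m_i\times m_i$, so $(\det U)^2=\prod_i\det(T_i^TT_i)$, and if $\ell_i<m_i$ then $T_i^TT_i$ has rank at most $\ell_i<m_i$ and hence zero determinant (the case $\ell_i>m_i$ uses $UU^T$). Your rank argument via partial sums is more elementary and avoids the trick, but needs the symmetric case noted above; the paper's argument is shorter once one spots the $U^TU$ idea. The rest of your proof (the block-diagonal formula in the square case and the inversion count for the column-block reversal) matches what the paper asserts without elaboration.
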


\begin{proof}
It is easy to see that if $\ell_i = m_i$ for all $i$, then 
\[
\det(U) = \ds \prod_{i=1}^k \det(T_i), \quad \det(V)=(-1)^{\sum_{1 \leq i < j \leq k } m_im_j } \det(U).
\]

Now, assume $T_{i}$ is not a square matrix for some $i \in [k]$. 
Suppose first that $\ell_{i} < m_{i}$. Then, since rank($T_{i}^{T}T_{i})\leq$ rank($T_i) \leq \ell_i<$order($T_{i}^{T}T_{i}$), $\det (T_{i}^{T}T_{i})=0$. Therefore,
    \[
    (\det U)^2=\det U^{T}U =\prod_{j=1}^{k} \det (T_j^{T}T_j)=0,
\]
which implies $\det(U)=0$, and thus $\det(V)=0$.
If $\ell_{i} > m_{i}$, a similar calculation using the rank of $T_{i}T_{i}^{T}$ yields the same result.
\end{proof}

\begin{lem} 
\label{lem:det-blockmatrix}
Suppose $u_1,\dots,u_k$ are positive integers summing up to $kn$.
Further, let $\left(\gamma_{i,j}\right)_{1 \leq i \leq k, 1 \leq j \leq k+1}$ 
be a matrix of parameters such that $\gamma_{i,k+1}=\gamma_{i,k}$,  $1 \leq i \leq k$ and $\Gamma$ be the square matrix consisting of its first $k$ columns.
Let $U_{j}$ and $V_{j}$ be matrices of order $n \times u_j$ for $j \in [k]$.
Finally, define a $kn \times kn$ matrix 
with $k \times k$ blocks as
\[
\ds \Pi
\coloneqq \left(
\begin{array}{c|c}
 \left(\gamma_{i,2j-1}U_{j}-\gamma_{i,2j}V_{j}\right)_{\substack{1 \leq i\leq k\\1 \leq j\leq \floor{\frac{k+1}{2}}}}
 & 
 \left(\gamma_{i,2k+2-2j}U_{j}-\gamma_{i,2k+1-2j}V_{j}\right)_{\substack{1 \leq i\leq k \\\floor{\frac{k+3}{2}} \leq j\leq k}}
 \end{array}
 \right).
\] 

\begin{enumerate}
\item If $u_p+u_{k+1-p} \neq 2n$  for some $p \in [k]$, then $\det \Pi = 0$.  

\item If $u_p+u_{k+1-p}=2n$ for all $p \in [k]$, then
 \begin{equation}
   \label{lem2}
 \det \Pi
 = (-1)^{\Sigma} 
 (\det \Gamma)^n
 \prod_{i=1}^{\floor{\frac{k+1}{2}}}
 \det W_i, 
 \end{equation}
where
\[
W_i= \begin{cases}
\left(\begin{array}{c|c}U_i  & -V_{k+1-i} \\\hline
-V_i  & U_{k+1-i}
\end{array}\right) & 1 \leq i \leq \floor{\frac{k}{2}}, \\
\left( U_{\frac{k+1}{2}}-V_{\frac{k+1}{2}}  \right) 
& \text{$k$ odd and $i = \frac{k+1}{2}$},
\end{cases}
\]
and
\[
\Sigma=\sum_{i=1}^{\floor{\frac{k}{2}}}(n+u_{i})+\begin{cases}
0 &  k \text{ even},\\
n \ds \sum_{i=1}^{\frac{k-1}{2}} u_i &  k \text{ odd}.
\end{cases}
\]
\end{enumerate}

\end{lem}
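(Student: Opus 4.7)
The plan is to separate the $\Gamma$-dependence from the matrices $U_j, V_j$ by a Kronecker-type factorization, then reduce the determinant to block-diagonal form. I would first verify directly from the definition of $\Pi$ that
\[
\Pi = (\Gamma \otimes I_n)\, N,
\]
where $N$ is the $kn \times kn$ block matrix whose row block $\ell$ (of size $n$) and column block $j$ (of size $u_j$) intersect in: $U_j$ when $\ell = 2j-1$ and $-V_j$ when $\ell = 2j$, for $j \leq \floor{(k+1)/2}$; and $-V_j$ when $\ell = 2(k+1-j)-1$ and $U_j$ when $\ell = 2(k+1-j)$, for $j \geq \floor{(k+3)/2}$ (all other blocks vanishing). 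For $k$ odd and $j=(k+1)/2$, the identity $\gamma_{i,k+1}=\gamma_{i,k}$ collapses the middle column to the single nonzero block $U_j-V_j$ sitting in row block $k$. Since $\det(\Gamma \otimes I_n) = (\det \Gamma)^n$, the factor $(\det \Gamma)^n$ is immediately extracted and we are left with computing $\det N$.

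The structural observation is that in $N$, row blocks $2p-1, 2p$ interact only with column blocks $p$ and $k+1-p$, and their $2n \times (u_p + u_{k+1-p})$ intersection is exactly $W_p$ of the lemma; for $k$ odd, row block $k$ meets only column block $(k+1)/2$, contributing $W_{(k+1)/2}$. Thus reordering the column blocks of $N$ into the interlaced sequence $(1,k,2,k-1,3,k-2,\dots)$ presents $N$ as a block-diagonal matrix with diagonal blocks $W_1, \dots, W_{\floor{(k+1)/2}}$. Invoking \cref{lem:matrix} on this arrangement immediately yields part (1): if some $u_p + u_{k+1-p} \neq 2n$, the corresponding $W_p$ is non-square and the block-diagonal determinant vanishes, so $\det \Pi = 0$. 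Otherwise each $W_p$ is square, and
\[
\det N = \varepsilon \prod_{i=1}^{\floor{(k+1)/2}} \det W_i,
\]
where $\varepsilon = \pm 1$ is the sign of the column permutation at the level of the individual columns.

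To finish part (2), I would compute $\varepsilon$ by decomposing the interlacing permutation into adjacent block transpositions: first move block $k$ from position $k$ to position $2$ through $k-2$ adjacent swaps past blocks $k-1, k-2, \dots, 2$, then move block $k-1$ down to position $4$, and so on; each adjacent swap of blocks of widths $u_a, u_b$ contributes a sign of $(-1)^{u_a u_b}$. Collecting these, the exponent organizes into telescoping sums of the form $u_{k+1-p}\bigl(u_{p+1} + u_{p+2} + \cdots + u_{k-p}\bigr)$, which by the defining relations $u_q + u_{k+1-q}=2n$ collapse modulo $2$ to the residue $(-1)^\Sigma$ announced in the statement; the odd-$k$ case picks up an additional contribution $n\sum_{i=1}^{(k-1)/2} u_i$ arising from moving the middle column block past the trailing blocks, using $u_{(k+1)/2}=n$. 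Combining with the prefactor $(\det \Gamma)^n$ from the first step then gives \eqref{lem2}.

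The main obstacle is the sign bookkeeping in this last step: one must group the $(-1)^{u_a u_b}$ contributions so that the symmetries $u_p + u_{k+1-p}=2n$ and (for $k$ odd) $u_{(k+1)/2}=n$ make the telescoping transparent, and handle the middle column block carefully in the odd case. Once this calculation is in hand, the rest of the proof---the Kronecker factorization, the identification of row/column block interactions, and the appeal to \cref{lem:matrix}---is essentially a direct computation.
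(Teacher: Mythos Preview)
Your proposal is correct and follows essentially the same approach as the paper: the paper permutes the column blocks of $\Pi$ into the interlaced order $1,k,2,k-1,\dots$ first and then observes the Kronecker factorization $(\Gamma\otimes I_n)\times\mathrm{diag}(W_1,\dots,W_{\lfloor(k+1)/2\rfloor})$, whereas you factor $\Pi=(\Gamma\otimes I_n)N$ first and then permute the columns of $N$---but since column permutation commutes with left multiplication these are the same argument. The sign computation is likewise the same permutation handled slightly differently (the paper writes down $\inv(\zeta)$ explicitly and simplifies using $u_p+u_{k+1-p}=2n$, while you track adjacent block transpositions), so the only remaining work in your sketch is to carry out that bookkeeping in full.
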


\begin{proof}
Consider the permutation $\zeta$ in $S_{kn}$ which rearranges the columns of $\Pi$ blockwise in the following order: $1, k, 2, k-1, \dots$. 
In other words, $\zeta$ can be written in one line notation as
\begin{multline*}  
  \zeta = (\underbrace{1,\dots,u_1}_{u_1},\underbrace{kn-u_k+1,\dots,kn}_{u_k}, \\
  \underbrace{u_1+1,\dots,u_1+u_2}_{u_2},\underbrace{kn-u_k-u_{k-1}+1,\dots,kn-u_k}_{u_{k-1}},\dots).
  \end{multline*}
Then, the number of inversions of $\zeta$ is
\begin{equation}
    \begin{split}
        \label{inversion}
 \inv(\zeta)
 = \sum_{i=\floor{\frac{k+3}{2}}}^k u_i(kn-&(u_1+\dots+u_{k+1-i})-(u_i+\dots+u_k)) \\
= &\sum_{i=1}^{\floor{\frac{k}{2}}} u_{k+1-i}(kn-(u_1+\dots+u_{k+1-i})-(u_i+\dots+u_k)) .
    \end{split}
\end{equation}
 Then it can be seen that
 \begin{equation}
\label{mk}
  \det \Pi =
  \sgn (\zeta)
  \det\left(\begin{array}{c}
 \gamma_{i,j}U_{j''}-\gamma_{i,j'}V_{j''}
 \end{array}\right)_{1 \leq i,j\leq k},   
\end{equation}
 where 
\[
j'= j-(-1)^j \quad \text{and} \quad 
j''=\begin{cases}
 \frac{j+1}{2} & j \text{ odd}, \\
 k+1-\frac{j}{2} & j \text{ even}.
 \end{cases}
\] 
Now note that
\[
\left(\begin{array}{c}
 \gamma_{i,j}U_{j''}-\gamma_{i,j'}V_{j''}
 \end{array}\right)_{1 \leq i,j\leq k} 
 = \begin{pmatrix}
\gamma_{i,j} I_n
\end{pmatrix}_{1 \leq i,j\leq k}   \times
\left(
\begin{array}{ccccc}
W_1 &&& \\
 & W_2  & &\text{\huge0} \\
 &     & \ddots \\
\text{\huge0} &    &   & W_{\floor{\frac{k+1}{2}}}
\end{array}
\right).\]

Now, the matrix $(\gamma_{i,j} I_n)_{1 \leq i,j \leq k}$ can be written as a tensor product $\Gamma \otimes I_n$ and therefore 
$\det(\gamma_{i,j} I_n)_{1 \leq i,j \leq k} = \left(\det \Gamma \right)^n$.
Therefore, 
\begin{equation}
 \label{mk2}
  {\det \left(\begin{array}{c}
 \gamma_{i,j}U_{j''}-\gamma_{i,j'}V_{j''}
 \end{array}\right)_{1 \leq i,j\leq k}} =\left(\det \Gamma \right)^n 
 \det \left(
\begin{array}{ccccc}
W_1 &&& \\
 & W_2  & &\text{\huge0} \\
 &     & \ddots \\
\text{\huge0} &    &   & W_{\floor{\frac{k+1}{2}}}
\end{array}
\right).  
\end{equation}
 If $u_p+u_{k+1-p} \neq 2n$, 
 for some $p \in [\floor{\frac{k+1}{2}}]$, 
 then
     $W_p$ is not a square matrix.
     Using \cref{lem:matrix}, {we see that} the latter determinant is zero,
Hence, by \eqref{mk2} and \eqref{mk},
     \[
\det \Pi = 0.
\]
Now suppose $u_p+u_{k+1-p}=2n, 
 \forall$ $p \in [\floor{\frac{k+1}{2}}]$. Then 
     $W_p$ is a square matrix
     $\forall$ $p \in [\floor{\frac{k+1}{2}}]$. 
     So, by  \eqref{mk2}, we get,
\begin{equation*}
 {\det \left(\begin{array}{c}
 \gamma_{i,j}U_{j''}-\gamma_{i,j'}V_{j''}
 \end{array}\right)_{1 \leq i,j\leq k}}
 =\left( \det \Gamma \right)^n
 \prod_{i=1}^{\floor{\frac{k+1}{2}}} \det W_i.
\end{equation*}
All that remains is to compute the sign.
By \eqref{inversion}, we get
\[ \inv(\zeta) 
 = \sum_{i=1}^{\floor{\frac{k}{2}}} (2n-u_i)(kn-2(k-i-1)).\] 
Therefore, if $k$ is even, then $\inv(\zeta)$ is even and $\sgn(\zeta)$ is $1$. If $k$ is odd, then the only contribution for $\sgn(\zeta)$ comes from $n \ds \sum_{i=1}^{\frac{k-1}{2}} u_i$, since other terms are even. 
Summing the terms gives $\Sigma$, completing the proof.
\end{proof}

\section{Schur factorization}
\label{sec:schur}

We first give a self-contained proof of the result of Littlewood~\cite{littlewood-1950} and Prasad~\cite{prasad-2016}, \cref{thm:schur-fac}. {We later found out that Littlewood's strategy of proof is, although in different language, essentially the same as ours.}

\begin{proof}[Proof of \cref{thm:schur-fac}] 
Recall that $\lambda$ has length at most $tn$.
From the definition \eqref{gldef}, 
the desired Schur polynomial is
\begin{equation}
  \label{srs}
   s_{\lambda}(X,\omega X, \dots ,\omega^{t-1}X)
   = \frac{\det
   \left( \left( (\omega^{p-1}x_i)^{\beta_{j}(\lambda)}
   \right)_{\substack{1 \leq i\leq n \\ 1\leq j\leq tn}} 
   \right)_{1\leq p \leq t}}
   {\det\left( \left( (\omega^{p-1}x_i)^{tn-j}
   \right)_{\substack{1 \leq i\leq n \\ 1\leq j\leq tn}} 
   \right)_{1\leq p \leq t}}.
\end{equation}
Permuting the columns of the determinant in the numerator of \eqref{srs} by $\sigma_{\lambda}$ from \eqref{sigma-perm},{ we see that the numerator of \eqref{srs} is}
\begin{equation}
    \begin{split}
       \label{sdet}
     \sgn(\sigma_{\lambda}) &
     \det\left( \left( (\omega^{p-1}x_i)^{\beta_{\sigma_{\lambda}(j)}(\lambda)}
     \right)_{\substack{1 \leq i\leq n \\ 1\leq j\leq tn}} 
     \right)_{1\leq p \leq t} \\
    = & \ds\sgn(\sigma_{\lambda}) 
    \det  \left( \omega^{(p-1)(q-1)}
    \left( x_i^{\beta_{j}^{(q-1)}(\lambda)}
    \right)_{\substack{1 \leq i\leq n \\  1 \leq j \leq  n_{q-1}(\lambda)}} \right)_{1\leq p,q\leq t} \\
     & \qquad =  \ds \sgn(\sigma_{\lambda}) 
     \det\left(  \omega^{(p-1)(q-1)}A^{\lambda}_{q-1} \right)_{1\leq p,q\leq t}, 
    \end{split}
\end{equation}
where $A^{\lambda}_{q-1}$ is defined in \eqref{def AB}.
Note that 
\[
\ds \left(
\omega^{(p-1)(q-1)}A^{\lambda}_{q-1}
\right)_{1 \leq p,q \leq t} 
= \left(
\omega^{(p-1)(q-1)} \otimes I_n
\right)_{1 \leq p,q \leq t} 
\times \left(
\begin{array}{ccccc}
A^{\lambda}_{0} &&& \\
 & A^{\lambda}_{1}  & &\text{\huge0} \\
\text{\huge0} &     & \ddots \\
&    &   & A^{\lambda}_{t-1}
\end{array}
\right),
\]
where $I_n$ is the $n \times n$ identity matrix. 
Hence,
\begin{equation}
\begin{split}
 \label{sfinal}
 \ds \det 
  & \left(
 \omega^{(p-1)(q-1)}A^{\lambda}_{q-1}
 \right)_{1 \leq p,q \leq t}  \\
 = & \left(
 \det \left(\omega^{(p-1)(q-1)}
 \right)_{1 \leq p,q \leq t}
 \right)^n 
 \times \det \left(
\begin{array}{ccccc}
A^{\lambda}_{0} &&& \\
 & A^{\lambda}_{1}  & &\text{\huge0} \\
\text{\huge0} &     & \ddots \\
&    &   & A^{\lambda}_{t-1}
\end{array}
\right).   
\end{split}
 \end{equation}
If core$_t(\lambda)$ is not {empty}, then using \cref{cor:ary}, {we see that} $n_{q}(\lambda)\neq n$ for some $0 \leq q \leq t-1$. So, $A^{\lambda}_{q}$ is not a square matrix for some $0 \leq q \leq t-1$. By \cref{lem:matrix},
$ \ds \det \left(\omega^{(p-1)(q-1)}A^{\lambda}_{q-1}\right)_{1 \leq p,q \leq t}=0$ and hence
\begin{equation*}
    s_{\lambda}(X,\omega X, \dots ,\omega^{t-1}X) = 0.
\end{equation*}

If $\core \lambda t$ is {empty}, then \cref{cor:ary} {shows} that
$n_{q}(\lambda)= n$ for all $0 \leq q \leq t-1$ and $A^{\lambda}_{q}$ is a square matrix for all $0 \leq q \leq t-1$. 
Applying \cref{lem:matrix} again to \eqref{sfinal}, {we see that}
\begin{equation*}
 \det \left(\omega^{(p-1)(q-1)}A^{\lambda}_{q-1}\right)_{1 \leq p,q \leq t}
 =\left( \det \left(
 \omega^{(p-1)(q-1)} \right)_{1 \leq p,q \leq t}\right)^n
 \prod_{q=0}^{t-1} 
 \det A^{\lambda}_{q}.   
\end{equation*}
Substituting in \eqref{sdet}, {we see that}
the numerator of \eqref{srs} is
\begin{equation}
    \label{sdet1}
   \sgn(\sigma_{\lambda}) 
   \left( \det \left(
 \omega^{(p-1)(q-1)}
 \right)_{1 \leq p,q \leq t}
 \right)^n 
 \prod_{q=0}^{t-1} 
 \det A^{\lambda}_{q}.   
\end{equation}
Evaluating \eqref{sdet1} for the empty partition and using \eqref{sigma0}, {we see that} the denominator of \eqref{srs} is
 \begin{equation}
\label{sfinal1}
(-1)^{\frac{t(t-1)}{2}\frac{n(n+1)}{2}} \left( \det \left(\omega^{(p-1)(q-1)} \right)_{1 \leq p,q \leq t}\right)^n \prod_{q=0}^{t-1} 
 \det A_{q}.   
 \end{equation}
{Substitution of the values \eqref{sdet1} and \eqref{sfinal1}} in \eqref{srs} gives
\begin{equation}
\label{sdet2}
    s_{\lambda}(X,\omega X, \dots ,\omega^{t-1}X)
    =(-1)^{\frac{t(t-1)}{2}\frac{n(n+1)}{2}}
    \sgn(\sigma_{\lambda}) \prod_{q=0}^{t-1} 
    \frac{\det A^{\lambda}_{q}}{\det A_{q}},
\end{equation}
where $A_{q}$ is defined in \eqref{def AB0}. 
Hence, using \eqref{snew} in \eqref{sdet2} gives
\begin{equation*}
  s_{\lambda}(X,\omega X, \dots ,\omega^{t-1}X) 
  =  (-1)^{\frac{t(t-1)}{2}\frac{n(n+1)}{2}}
  \sgn(\sigma_{\lambda})
  \prod_{i=0}^{t-1} s_{\lambda^{(i)}}(X^t), 
  \end{equation*}
completing the proof.  
\end{proof}

We will now prove \cref{thm:schur-1}. 
From \cref{prop:mcd-t-core-quo}, it follows that knowing $n_{i}(\lambda,m)$, $0 \leq i \leq t-1$, for a $t$-core $\lambda$ of length {at most} $m$ is the same as knowing $\lambda$. 
For the rest of this section, we will assume that $\ell(\lambda) \leq tn+1$. This implies $\ell(\core \lambda t) \leq tn+1$.
We then have the following result.  

\begin{lem}
\label{lem:np-schur-1}
Suppose $\lambda$ is a partition 
and $c \in \mathbb{N}_{\geq 0}$. Then
\[ 
\core \lambda t = (c) \text{ {if and only if} }  n_{i}(\lambda)
=
\begin{cases}
n+1 & \text{ if } i=c, \\
n & \text{ otherwise. }
\end{cases}
\]
\end{lem}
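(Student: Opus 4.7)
The plan is to reduce to the case of a single-part $t$-core and then directly compute the residue counts from the explicit beta-set, appealing to the uniqueness in Macdonald's reconstruction of a $t$-core from its residue counts.

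First, I would invoke \eqref{no-parts-partition=core} with $m = tn+1$, which says $n_{i}(\lambda,tn+1) = n_{i}(\core{\lambda}{t},tn+1)$ for every $i \in \{0,\dots,t-1\}$. This lets me replace $\lambda$ by $\core{\lambda}{t}$ on both sides of the equivalence, so it suffices to treat the case where $\lambda$ is itself a $t$-core.

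For the forward direction, assume $\lambda = (c)$; since $(c)$ is a $t$-core we automatically have $0 \leq c \leq t-1$. Padding with zeros to length $tn+1$ and applying the definition $\beta_{i}(\lambda,tn+1) = \lambda_i + tn + 1 - i$, I would compute
\[
\beta((c),tn+1) = (c+tn,\; tn-1,\; tn-2,\; \dots,\; 1,\; 0).
\]
The set $\{0,1,\dots,tn-1\}$ contains exactly $n$ elements in each residue class modulo $t$, and the extra entry $c+tn \equiv c \pmod t$ contributes one additional element to class $c$. This gives $n_{c}(\lambda) = n+1$ and $n_{i}(\lambda) = n$ for $i \neq c$, as claimed.

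For the backward direction, suppose the $n_{i}(\lambda)$ have the stated form. Since $\lambda$ has been reduced to a $t$-core, I would apply \cref{prop:mcd-t-core-quo}(1): the numbers $tj + i$ with $0 \leq j < n_{i}(\lambda)$, $0 \leq i \leq t-1$, listed in decreasing order as $\tilde{\beta}_1 > \dots > \tilde{\beta}_{tn+1}$, form $\beta(\lambda,tn+1)$, and then $\lambda_i = \tilde{\beta}_i - (tn+1) + i$. With the hypothesized counts this multiset is $\{0,1,\dots,tn-1\} \cup \{c+tn\}$, whose decreasing arrangement is exactly $(c+tn, tn-1, \dots, 1, 0)$. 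Reading off the parts yields $\lambda_1 = c$ and $\lambda_i = 0$ for $i \geq 2$, so $\lambda = (c)$. There is no substantive obstacle here; the only mild care needed is the padding convention $\ell(\lambda) \leq tn+1$ and the implicit constraint $c \leq t-1$ coming from $(c)$ being a $t$-core.
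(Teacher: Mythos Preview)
Your proof is correct and follows essentially the same approach as the paper: both compute the beta-set $\beta((c),tn+1)=(tn+c,tn-1,\dots,0)$ explicitly, read off the residue counts, and use \eqref{no-parts-partition=core} to pass between $\lambda$ and $\core{\lambda}{t}$. The only cosmetic differences are that the paper applies \eqref{no-parts-partition=core} at the end rather than the beginning, and that you are more explicit in the backward direction by invoking \cref{prop:mcd-t-core-quo}(1) to justify that a $t$-core is determined by its residue counts, whereas the paper folds both directions into a single ``if and only if'' chain.
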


\begin{proof} 
It is obvious that $\core\lambda t=(c)$ if and only if 
$\beta(\core\lambda t)=(tn+c,tn-1,tn-2,\dots,0)$. This further implies
that $\core\lambda t=(c)$ if and only if 
\[
n_{i}(\core\lambda t)=\begin{cases}
n+1 & \text{ if } i=c,
\\
n & \text{ otherwise. }
\end{cases}.
\]
Now \eqref{no-parts-partition=core} gives the desired result.
\end{proof}

For $0 \leq c \leq t-1$, let $\sigma_{\lambda}^{c}$ be a permutation in $S_{tn+1}$ such that it rearranges parts of $\beta(\lambda)$ in the following way.
First $n_c(\lambda)$ parts of $\sigma_{\lambda}^{c}(\beta(\lambda))$ are congruent to $c \pmod{t}$, in descending order 
and then the next $n_i(\lambda)$ parts of $\sigma_{\lambda}^{c}(\beta(\lambda))$ are congruent to $i \pmod{t}$, in descending order, starting from $i=0$ to $t-1$, $i \neq c$.

{We now prove the second Schur factorization. Our proof strategy follows the proof sketch given in Littlewood~\cite[Chapter VII, Section IX]{littlewood-1950}.}

\begin{proof}[Proof of \cref{thm:schur-1}] 
Using the definition \eqref{gldef}, {we see that} the desired Schur polynomial is
\begin{equation}
 \label{n1srs}
   s_{\lambda}(X,\omega X, \dots ,\omega^{t-1}X,x) 
   = \frac{\det \left(
    \begin{array}{c}
    \left( \left( 
    (\omega^{p-1}x_i)^{\beta_{j}(\lambda)}
    \right)_{\substack{1 \leq i\leq n \\ 1\leq j\leq tn+1}}
    \right)_{1\leq p \leq t} \\[0.4cm]  
      \hline     \\[-0.3cm]
       \left( x^{\beta_{j}(\lambda)} \right)_{1 \leq j \leq tn+1}
    \end{array} 
    \right)}
    {\det \left( \begin{array}{c}  
    \left( \left( (\omega^{p-1}x_i)^{tn+1-j}
    \right)_{\substack{1 \leq i\leq n \\ 1\leq j\leq tn+1}}
    \right)_{1\leq p \leq t} \\[0.4cm]  
      \hline     \\[-0.3cm]
    \left( x^{\beta_{j}(\lambda)} \right)_{1 \leq j \leq tn+1} \end{array} \right)}.
\end{equation}
By the pigeonhole principle, there exists a $v$, $0 \leq v \leq t-1$, such that $n_{v}(\lambda) \geq n+1$.
Permuting the columns of the determinant in the numerator of \eqref{n1srs} by $\sigma^{v}_{\lambda}$, {we see that} the numerator of \eqref{n1srs} is
\begin{equation}
    \begin{split}
    \sgn(\sigma^{v}_{\lambda}) \det & \left(
    \begin{array}{c}
        \left( \left( (\omega^{p-1}x_i)^{\beta_{\sigma^{v}_{\lambda}(j)}(\lambda)}
        \right)_{\substack{1 \leq i\leq n \\ 1\leq j\leq tn+1}}
        \right)_{1\leq p \leq t} \\[0.4cm]  
      \hline     \\[-0.3cm]
    \left( x^{\beta_{\sigma^{v}_{\lambda}(j)}(\lambda)} \right)_{1 \leq j \leq tn+1}
    \end{array} \right) \\
    & = \sgn(\sigma^v_{\lambda}) \det \left(
    \begin{array}{c|c} 
    \left( \omega^{(p-1)(v)} A^{\lambda}_{v} \right)_{1\leq p \leq t} &   \left( \omega^{(p-1)(q-1)} A^{\lambda}_{q-1} 
        \right)_{\substack{1\leq p,q \leq t\\q \neq v+1}} \\[0.4cm]  
      \hline     \\[-0.3cm]
  B^{\lambda}_{v}  & \left( B^{\lambda}_{q-1} \right)_{\substack{1 \leq q \leq t \\q \neq v+1}}
    \end{array} \right),
    \end{split}
\end{equation}
where $A^{\lambda}_{q}$ is defined in \eqref{def AB} and  $B^{\lambda}_{q}=\left( x^{\beta^{(q)}_j(\lambda)} \right)_{1 \leq j \leq n_{q}(\lambda)}, q \in [0,t-1]$. For $p \in [t],$ multiplying the rows of the $p$'th block by $\omega^{(1-p)v},$ {we see that }the numerator of \eqref{n1srs} is 
\[
\sgn(\sigma^v_{\lambda}) \omega^{nv\frac{t(t-1)}{2}} \det \left(
    \begin{array}{c|c} 
    \left(  A^{\lambda}_{v} \right)_{1\leq p \leq t} &   \left( \omega^{(p-1)(q-v-1)} A^{\lambda}_{q-1} 
        \right)_{\substack{1\leq p,q \leq t\\q \neq v+1}} \\[0.4cm]  
      \hline     \\[-0.3cm]
  B^{\lambda}_{v}  & \left( B^{\lambda}_{q-1} \right)_{\substack{1 \leq q \leq t \\q \neq v+1}}
    \end{array} \right).
\]
Applying the blockwise row operations 
$R_t \rightarrow R_1+R_2+\dots+R_{t}$ 
followed by $R_i \rightarrow R_i-\frac{1}{t}R_t$, for $1 \leq i \leq t-1$, we get 
\begin{equation}
\label{3nsrs}
    \sgn(\sigma^v_{\lambda})  \omega^{nv\frac{t(t-1)}{2}}
\det  \left(\begin{array}{c|c}
\left(  0 \right)_{1\leq p \leq t-1}   & \left( \omega^{(p-1)(q-v-1)} A^{\lambda}_{q-1} 
        \right)_{\substack{1\leq p \leq t-1\\1\leq q \leq t\\q \neq v+1}}   
\\&\\\hline&\\
t A^{\lambda}_{v} &   \left(  0 \right)_{\substack{1 \leq q \leq t \\q \neq v+1}}
\\&\\\hline&\\
B^{\lambda}_{v}  & \left( B^{\lambda}_{q-1} \right)_{\substack{1 \leq q \leq t \\q \neq v+1}}  \end{array}\right).
\end{equation}
Note that
\begin{equation}
\begin{split}
\label{7nsrs}
\ds & \left( \omega^{(p-1)(q-v-1)} A^{\lambda}_{q-1} 
        \right)_{\substack{1\leq p \leq t-1\\1\leq q \leq t\\q \neq v+1}}  \\
& = \left(
\omega^{(p-1)(q-v-1)} \otimes I_n
\right)_{\substack{1\leq p \leq t-1\\1\leq q \leq t\\q \neq v+1}}  
\times \left(
\begin{array}{ccccc}
A^{\lambda}_0  \\
& \ddots  &   & \text{\huge0}\\
& & \widehat{A^{\lambda}_{v}} \\
& \text{\huge0} &   & \ddots \\
&  &   &   & A^{\lambda}_{t-1} 
\end{array}
\right). 
\end{split}
\end{equation}

Now suppose $\ell(\core\lambda t) > 1$. 
If $n_v(\lambda)>n+1$, then $\left( 
\begin{array}{c}
    A^{\lambda}_{v}  \\ \hline 
   B^{\lambda}_{v}   
\end{array}
\right)$ is not a square matrix. 
Therefore, the matrix in \eqref{3nsrs} is of the form
\[
\left( 
\begin{array}{c|c}
    0 & M_1  \\ \hline 
   M_2 & M_3 
\end{array}
\right), 
\]
where the number of columns in $M_2$ is more than its number of rows.
Thus, the determinant in \eqref{3nsrs} is $0$. 
If $n_{v}(\lambda) = n+1$, then $\left( 
\begin{array}{c}
    A^{\lambda}_{v}  \\ \hline 
   B^{\lambda}_{v}   
\end{array} 
\right)$ is a square matrix. 
So the numerator of \eqref{n1srs}, using \eqref{3nsrs} and \eqref{7nsrs}, is 
\begin{equation}
    \label{4nsrs}
    \begin{split}
    \sgn(\sigma^v_{\lambda}) & \omega^{nv\frac{t(t-1)}{2}}   (-1)^{(t-1)n(n+1)} t^n \det \left( 
\begin{array}{c}
    A^{\lambda}_{v}  \\ \hline 
   B^{\lambda}_{v}   
\end{array}
\right) \\ & \times \det \left(
\omega^{(p-1)(q-v-1)} \otimes I_n
\right)_{\substack{1\leq p \leq t-1\\1\leq q \leq t\\q \neq v+1}}  
\times \det  \left(
\begin{array}{ccccc}
A^{\lambda}_0  \\
& \ddots  &   & \text{\huge0}\\
& & \widehat{A^{\lambda}_{v}} \\
& \text{\huge0} &   & \ddots \\
&  &   &   & A^{\lambda}_{t-1} 
\end{array}
\right).
 \end{split}
\end{equation}
Since $\core \lambda t \neq (v)$, $n_{i}(\lambda) \neq n$ for some $i \in \{0,1,\dots,\widehat{v},\dots,t-1\}$ by \cref{lem:np-schur-1}. So,
$A^{\lambda}_{i}$ is not a square matrix. Using \cref{lem:matrix}, {we see that}
the last determinant in \eqref{4nsrs} is $0$.
Plugging this into \eqref{n1srs}, we have that
\begin{equation*}
    s_{\lambda}(X,\omega X, \dots ,\omega^{t-1}X,x) = 0.
\end{equation*}
If $\ell(\core\lambda t) \leq 1$, then $\core\lambda t=(c)$ for some $0 \leq c \leq t-1$.
Using \cref{lem:np-schur-1}, {we see that}
\[
n_{i}(\lambda)=\begin{cases}
n+1 & \text{ if } i=c,
\\
n & \text{ otherwise.}
\end{cases}\]
Since $n_{v}(\lambda) \geq n+1$, we must have that $v=c$ and $n_{v}(\lambda) = n+1$. Thus, $A^{\lambda}_{i}$ is {a} square matrix for all $0 \leq i \leq t-1, i \neq v$. Using \eqref{3nsrs} and \eqref{7nsrs}, {we see that} the numerator of \eqref{n1srs} is
\begin{equation}
\begin{split}
    \label{5nsrs}
    \sgn(\sigma^c_{\lambda})   \omega^{cn\frac{t(t-1)}{2}}   (-1)^{(t-1)n(n+1)} t^n & \det \left( 
\begin{array}{c}
    A^{\lambda}_{c}  \\ \hline 
   B^{\lambda}_{c}   
\end{array}
\right) \\ \times
& \left( \det \left( \omega^{(p-1)(q-c-1)} 
        \right)_{\substack{1\leq p \leq t-1\\1\leq q \leq t\\q \neq c+1}} \right)^n \prod_{\substack{i=0\\i \neq c}}^{t-1} \det  A^{\lambda}_{i}.
        \end{split}
\end{equation}
Since the denominator of \eqref{n1srs} is its numerator evaluated at the empty partition, the above strategy also works for the denominator. For the empty partition,  $n_{0}(\emptyset,tn+1)=n+1$, $n_{q}(\emptyset,tn+1)=n$, $1 \leq q \leq t-1$ 
with $\sgn(\sigma^0_{\emptyset})
=(-1)^{\frac{t(t-1)}{2}\frac{n(n+1)}{2}}$.  This is then given by
\begin{equation}
\begin{split}
    \label{nfac3}
   (-1)^{\frac{t(t-1)}{2}\frac{n(n+1)}{2}} (-1)^{(t-1)n(n+1)} t^n & \det \left( 
\begin{array}{c}
    A_{0}  \\ \hline 
   B_{0}   
\end{array}
\right) \\ {\times} &
\left( \det \left( \omega^{(p-1)(q-1)} 
        \right)_{\substack{1\leq p \leq t-1\\1\leq q \leq t\\q \neq c+1}} \right)^n \prod_{i=1}^{t-1} \det A_{i}.
        \end{split}
        \end{equation}
Taking ratios and using \eqref{snew}, {we see that} the Schur polynomial is given by
\begin{equation*}
    (-1)^{\frac{t(t-1)}{2}\frac{n(n+1)}{2}-cn}
  \sgn(\sigma^{c}_{\lambda}) \times  x^c \times
  s_{\lambda^{(c)}}(X^t,x^t) 
  \prod_{\substack{i=0\\i\neq c}}^{t-1} 
  s_{\lambda^{(i)}}(X^t),
\end{equation*}
which proves the result.
\end{proof}

There is a natural correspondence between irreducible representations of $\Sp_{2tn}$ and $\GL_{2tn+1}$ which is one-one
and onto the set of irreducible selfdual representations of $\GL_{2tn+1}$; see for instance~\cite{prasad-wagh-2020}.
The following lemma then relates the character values on these special elements in the two groups; at least it says that being nonzero is exactly the same condition as in correspondence between irreducible representations of $\Sp_{2n}$ and $\GL_{2n+1}$ in~\cite{prasad-wagh-2020}.
To be precise, let $\mu$ be a partition of length at most $tn$ indexing a representation of $\Sp_{2tn}$, and construct
$\lambda=\mu_1+(\mu,0,\dots,0,-\rev{\mu})$, which has length at most $2tn+1$.
Then $\lambda$ indexes a self-dual representation of $\GL_{2tn+1}$.

\begin{lem}
With $\mu$ and $\lambda$ as above, $\ell{(\core \lambda t)}$ is at most one if and only if $\core \mu t$ is symplectic.
\end{lem}

\begin{proof}
Let $\mu_1 \equiv p \pmod{t}$ for some $p \in [0,t-1]$. Observe that
\begin{multline*}
\beta(\lambda)=(2\mu_1+2tn,\mu_1+\mu_2+2tn-1,\dots,\mu_1+\mu_{tn}+tn+1,
\mu_1+tn,\\
\mu_1-\mu_{tn}+tn-1,\dots,\mu_1-\mu_2+1,0),     
\end{multline*}
where the first $tn$ parts are of the form $\mu_1+tn+1+\beta(\mu)$ and the last $tn$ parts are of the form $\mu_1+tn-1-\rev(\beta(\mu))$. 
The sum of $\beta(\lambda)_i$ and $\beta(\lambda)_{2tn+2-i}$ is congruent to $2p \pmod{t}, i \in [tn]$. So for all $j \in [0,t-1]$, the parts of $\beta(\lambda)$ less than $\mu_1+tn$ congruent to $j$ are in bijective correspondence with the parts greater than $\mu_1+tn$ congruent to $2p-j$.
Hence the number of parts of $\beta(\lambda)$ greater than $\mu_1+tn$ congruent to $j$ and $2p-j$ is 
\begin{equation}
 \label{nmu}
\begin{cases}
n_j(\lambda)-1 & \text{ if } j = p, \\
n_j(\lambda) & \text{ otherwise },
\end{cases}   
\end{equation}
and $n_p(\lambda)$ is odd. If $\zeta \equiv i \pmod{t}$ occurs in $\beta(\mu),$ then $\mu_1+tn+1+\zeta \equiv p+1+i \pmod{t}$, greater than $\mu_1+tn$, occurs in $\beta(\lambda)$. Using \eqref{nmu} for $j=i_p \coloneqq (p+1+i) \pmod{t},$ {we see that} 
\begin{equation}
\label{nmu1}
 n_i(\mu)+n_{t-2-i}(\mu)= n_{i_p}(\lambda), i \in [0,t-2], \quad n_{t-1}(\mu)= \frac{n_{p}(\lambda)-1}{2}.  
\end{equation}
Suppose $\ell{(\core \lambda t)}$ is at most one. Then $\core \lambda t=(c)$, for some $c \in [0,t-1]$. \cref{lem:np-schur-1} shows that $n_c(\lambda)=2n+1$ and $n_i(\lambda)=2n$ for $i \in [0,t-1], i \neq c$. Since $n_p(\lambda)$ is odd, $p=c$. {Substitution of this} in \eqref{nmu1} gives, $n_i(\mu)+n_{t-2-i}(\mu)=2n$ for $i \in [0,t-2]$ and $n_{t-1}(\mu)=n$. 
From \cref{cor:symp-parts}, it now follows that $\core \mu t$ is symplectic.

Now assume $\core \mu t$ is symplectic. Using \cref{cor:symp-parts} in \eqref{nmu1}, {we see that} $n_p(\lambda)=2n+1$ and $n_i(\lambda)=2n$. Then \cref{lem:np-schur-1} shows that $\core \lambda t=(p)$.
\end{proof}

\section{Factorization of other classical characters}
\label{sec:fact other}

In this section, we will prove all the other classical character factorizations using results from \cref{sec:background}. We will give the most details for the symplectic case in \cref{sec:symp} and will be a little more sketchy for the even orthogonal case in \cref{sec:eorth} and the odd orthogonal case in \cref{sec:oorth}.
We will assume $\ell(\lambda) \leq tn$ throughout this section.

\subsection{Symplectic characters}
\label{sec:symp}

We first recall the matrices 
$A_{p,q}^{\lambda}$ and $\bar{A}_{p,q}^{\lambda}$ from \eqref{def AB}.
If $\ds \sum_{i=0}^{t-2}n_{i}(\lambda)=(t-1)n$, then consider the $(t-1)n \times (t-1)n$ matrix
\[
\Pi_1= \left(\omega^{pq}A^{\lambda}_{q-1,1}-\bar{\omega}^{pq}
\bar{A}^{\lambda}_{q-1,1}\right)_{1 \leq p,q \leq t-1}.
\]
{Substitution of} $U_j=A_{j-1,1}^{\lambda}$, $V_j=\bar{A}_{j-1,1}^{\lambda}$ for $1 \leq j \leq t-1$ and
\[
\gamma_{i,j} = \begin{cases}
\omega^{\frac{i(j+1)}{2}} & j \text{ odd},\\
\omega^{-\frac{ij}{2}} & j \text{ even}, 
\end{cases}
\]
in \cref{lem:det-blockmatrix} proves the following corollary.

\begin{cor} 
\label{cor:det-Pi_1} 
\begin{enumerate} 

\item If $n_{i}(\lambda)+n_{t-2-i}(\lambda)\neq 2n$ for some $i \in [0,\floor{\frac{t-2}{2}}]$, then $\det \Pi_1 = 0$.

\item If $n_{i}(\lambda)+n_{t-2-i}(\lambda) = 2n$ for all $i \in \{0,1,\dots,\floor{\frac{t-2}{2}}\}$, then 
\begin{equation}
    \begin{split}
        \label{spdet3}
\det \Pi_1 
= (-1)^{\Sigma_1} \left(\det (\gamma_{i,j})_{1 \leq i,j \leq t-1} \right)^{n}
\prod_{q=1}^{\floor{\frac{t-1}{2}}}
&\det\left(\begin{array}{c|c}
   A_{q-1,1}  & \bar{A}_{t-q-1,1} \\[0.2cm]
   \hline \\[-0.3cm]
   \bar{A}_{q-1,1}  & A_{t-q-1,1}
\end{array}\right) \\
\times &
\begin{cases}
\det \left(A_{\frac{t}{2}-1,1}- \bar{A}_{\frac{t}{2}-1,1} \right)  
& t \text{ even,}\\
1 & t \text{ odd,}
\end{cases}  
    \end{split}
\end{equation}
where 
\[
\Sigma_1=
\sum_{q=1}^{\floor{\frac{t-1}{2}}}
\left(n+n_{q-1}(\lambda)\right)
+\begin{cases}
n \ds \sum_{q=1}^{\frac{t-2}{2}} n_{q-1}(\lambda) &  t \text{ even}, \\ 
0 & t \text{ odd}.
\end{cases}
\]
\end{enumerate}
\end{cor}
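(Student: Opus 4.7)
The plan is to specialize \cref{lem:det-blockmatrix} with the substitutions $k = t-1$, $u_j = n_{j-1}(\lambda)$, $U_j = A^{\lambda}_{j-1,1}$, $V_j = \bar{A}^{\lambda}_{j-1,1}$, and $\gamma_{i,j}$ as in the statement. The condition $\sum_j u_j = (t-1)n$ holds by the standing hypothesis $\sum_{i=0}^{t-2}n_{i}(\lambda)=(t-1)n$, so the lemma applies.

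The critical step is verifying that under this substitution the abstract matrix $\Pi$ produced by \cref{lem:det-blockmatrix} is exactly $\Pi_1$. For column blocks $j$ in the first half, direct evaluation gives $\gamma_{i,2j-1}=\omega^{ij}$ and $\gamma_{i,2j}=\omega^{-ij}$, so the $(i,j)$-block equals $\omega^{ij}A^{\lambda}_{j-1,1}-\omega^{-ij}\bar{A}^{\lambda}_{j-1,1}$, matching $\Pi_1$. For column blocks $j$ in the second half, $2k+2-2j=2(t-j)$ is even while $2k+1-2j$ is odd, so $\gamma_{i,2k+2-2j}=\omega^{-i(t-j)}$ and $\gamma_{i,2k+1-2j}=\omega^{i(t-j)}$; invoking $\omega^{t}=1$ these reduce to $\omega^{ij}$ and $\omega^{-ij}$ respectively. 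This is precisely where the choice $k=t-1$ is essential, and it is the only nontrivial identification required.

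Part (1) now falls out of \cref{lem:det-blockmatrix}(1): the obstruction $u_p+u_{k+1-p}\ne 2n$ reads $n_{p-1}(\lambda)+n_{t-1-p}(\lambda)\ne 2n$, equivalently $n_{i}(\lambda)+n_{t-2-i}(\lambda)\ne 2n$ for some $0\le i\le t-2$ upon setting $i=p-1$; the $i\leftrightarrow t-2-i$ symmetry lets us restrict to $0\le i\le\floor{(t-2)/2}$ without loss.

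For part (2), \cref{lem:det-blockmatrix}(2) yields the block-diagonal factorization in which the middle block is $A^{\lambda}_{t/2-1,1}-\bar{A}^{\lambda}_{t/2-1,1}$ when $t$ is even and is absent when $t$ is odd, matching the stated dichotomy, while the off-diagonal blocks $W_q = \bigl(\begin{smallmatrix}U_q & -V_{t-q}\\ -V_q & U_{t-q}\end{smallmatrix}\bigr)$ become, after factoring out the minus signs from the second block row and the second block column, the all-positive block matrix of the statement. Using $n_{q-1}(\lambda)+n_{t-q-1}(\lambda)=2n$ (so the parities agree), these sign contributions combine with the $\Sigma$ provided by the lemma under the index translation $u_i\mapsto n_{i-1}(\lambda)$ to produce exactly the expression for $\Sigma_1$. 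The prefactor $(\det\Gamma)^n$ with $\Gamma=(\gamma_{i,j})_{1\le i,j\le t-1}$ carries over verbatim. The main obstacle is this final sign bookkeeping; the remainder is a mechanical specialization.
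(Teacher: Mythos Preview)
Your proposal is correct and follows essentially the same approach as the paper: the paper's entire proof is the one-line remark ``Substituting $U_j=A_{j-1,1}^{\lambda}$, $V_j=\bar{A}_{j-1,1}^{\lambda}$ and $\gamma_{i,j}$ as above in \cref{lem:det-blockmatrix} proves the following corollary.'' You carry out exactly this substitution but supply the verifications the paper omits---in particular the check that $\Pi=\Pi_1$ via the identities $\omega^{\pm i(t-j)}=\omega^{\mp ij}$, and the explicit passage from the signed blocks $W_q$ of \cref{lem:det-blockmatrix} to the unsigned blocks appearing in \eqref{spdet3}.
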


\begin{proof}[Proof of \cref{thm:sympfact}] 
Using the formula {for} symplectic characters in \eqref{spdef}, {we see that} the symplectic polynomial considered here is
\begin{equation}
 \label{symp-1}
    \sp_{\lambda}(X,\omega X,\dots ,\omega^{t-1}X) 
    = \frac{
    \det
    \left( \left(
    (\omega^{p}x_i)^{\beta_{j}(\lambda)+1}
    -(\bar{\omega}^{p}\bar{x}_i)^{\beta_{j}(\lambda)+1}
    \right)_{\substack{1 \leq i\leq n \\ 1\leq j\leq tn}} 
    \right)_{0 \leq p \leq t-1}}
    {\det \left( \left( (\omega^{p}x_i)^{tn-j+1}
    -(\bar{\omega}^{p}{\x}_i)^{tn-j+1}
    \right)_{\substack{1 \leq i\leq n \\ 1\leq j\leq tn}}
    \right)_{0 \leq p \leq t-1}}.   
\end{equation}
Since the denominator of the right hand side of \eqref{symp-1} is the same as its numerator evaluated at the empty partition, we compute the factorization for the numerator and use that to get factorization for the denominator. Permuting the columns of the determinant in the numerator of \eqref{symp-1} by $\sigma_{\lambda}$ from \eqref{sigma-perm}, {we see that} the numerator of \eqref{symp-1} is 
\begin{equation}
    \begin{split}
        \label{spfinal}
\sgn(\sigma_{\lambda}) 
\det
\left(
\left( 
(\omega^{p}x_i)^{\beta_{\sigma_{\lambda}(j)}(\lambda)+1}
-(\bar{\omega}^{p}\bar{x}_i)^{\beta_{\sigma_{\lambda}(j)}(\lambda)+1}
\right)_{\substack{1 \leq i\leq n \\ 1\leq j\leq tn}} \right)_{0 \leq p \leq t-1} \\
    =\sgn(\sigma_{\lambda}) 
    \det
    \left( 
    \left(
    \omega^{p(q+1)}x_i^{\beta_j^{(q)}(\lambda)+1}
    -\bar{\omega}^{p(q+1)}\bar{x}_i^{\beta_j^{(q)}(\lambda)+1}
    \right)
    _{\substack{1 \leq i\leq n \\ 1 \leq j \leq  n_{q}(\lambda)}} 
    \right)_{0 \leq p,q\leq t-1}\\
   = \sgn(\sigma_{\lambda})
   \det \left( 
   \omega^{p(q+1)} A^{\lambda}_{q,1}
   -\bar{\omega}^{p(q+1)}\bar{A}^{\lambda}_{q,1}
   \right)_{0\leq p,q\leq t-1}. 
    \end{split}
\end{equation}
Applying the blockwise row operations 
$R_1 \rightarrow R_1+R_2+\dots+R_{t}$ 
followed by $R_i \rightarrow R_i-\frac{1}{t}R_1$, for $2 \leq i \leq t$, we get 
\begin{equation}
    \begin{split}
\label{det-1}
 \ds \det &\left( \omega^{p(q+1)} A^{\lambda}_{q,1}-\bar{\omega}^{p(q+1)}\bar{A}^{\lambda}_{q,1}\right)_{0\leq p,q\leq t-1}   \\
= &\det  \left(\begin{array}{c|c|c|c}
0 & \dots & 0 & 
t(A^{\lambda}_{t-1,1}-\bar{A}^{\lambda}_{t-1,1})
\\&&&\\\hline&&&\\
\omega A^{\lambda}_{0,1}-\omega^{t-1}\bar{A}^{\lambda}_{0,1} &  \dots & \omega^{t-1}A^{\lambda}_{t-2,1}-\omega \bar{A}^{\lambda}_{t-2,1}&0\\&&&\\\hline&&&\\
\omega^2 A^{\lambda}_{0,1}-\omega^{t-2}\bar{A}^{\lambda}_{0,1} &  \dots & \omega^{t-2}A^{\lambda}_{t-2,1}-\omega^2 \bar{A}^{\lambda}_{t-2,1}&0\\&&&\\\hline&&&\\
\vdots  & \ddots & \vdots & \vdots \\&&&\\\hline&&&\\
\omega^{t-1}A^{\lambda}_{0,1}-\omega \bar{A}^{\lambda}_{0,1}  & \dots & \omega A^{\lambda}_{t-2,1}-\omega^{t-1} \bar{A}^{\lambda}_{t-2,1}&0
\end{array}\right). 
    \end{split}
\end{equation}
This is now a $2 \times 2$ block determinant with anti-diagonal blocks. We apply \cref{lem:matrix}, for $k=2$ and $d=tn$, to evaluate this determinant.

If $\core \lambda t$ is not a symplectic $t$-core, then by \cref{cor:symp-parts},
either $n_{t-1}(\lambda) \neq n$ or  $n_{i}(\lambda)+n_{t-2-i}(\lambda) \neq  2n$ for some $i \in \left\{0,1,\dots,\floor{\frac{t-2}{2}}\right\}$.
In the first case, i.e. if $n_{t-1}(\lambda) \neq n$, then \cref{lem:matrix} shows
the determinant is \eqref{det-1} is $0$.
If $n_{t-1}(\lambda) = n$, then the determinant in \eqref{det-1} is
\begin{equation}
\label{spdet2}
(-1)^{(t-1)n^2} t^{n} 
\det \left(
A^{\lambda}_{t-1,1}-\bar{A}^{\lambda}_{t-1,1}
\right)
\times
\det \left( 
\omega^{pq} A^{\lambda}_{q-1,1}
-\bar{\omega}^{pq}\bar{A}^{\lambda}_{q-1,1}
\right)_{1\leq p,q\leq t-1},
\end{equation}
using \cref{lem:matrix}.
Observe that the $(t-1)n \times (t-1)n$ block  matrix of the determinant in \eqref{spdet2} is of the form $\Pi_1$ in \cref{cor:det-Pi_1}.
Now if $n_{i}(\lambda)+n_{t-2-i}(\lambda) \neq  2n$ for some $i \in \left\{0,1,\dots,\floor{\frac{t-2}{2}}\right\}$, then 
the determinant in \eqref{spdet2} is $0$ by \cref{lem:det-blockmatrix} 
and therefore, in both cases,
\[
\sp_{\lambda}(X,\omega X, \dots ,\omega^{t-1}X) = 0.
\]

If $\core \lambda t$ is a  symplectic $t$-core, then
by \cref{cor:symp-parts}, $n_{t-1}(\lambda) = n$ and $n_{i}(\lambda)+n_{t-2-i}(\lambda) =  2n$, $i \in \left\{0,1,\dots,\floor{\frac{t-2}{2}}\right\}$.
Using \cref{cor:det-Pi_1}(2) in \eqref{spdet2}, {we see that} the determinant in the numerator of \eqref{symp-1} is 
\begin{equation}
    \begin{split}
      \label{num}
\sgn(\sigma_{\lambda}) &
((-1)^{(t-1)n}t)^{n} 
\det \left(A^{\lambda}_{t-1,1}-\bar{A}^{\lambda}_{t-1,1}\right) 
(-1)^{\Sigma_1} 
(\det (\gamma_{i,j})_{1 \leq i,j \leq t-1} )^n \\&
\times \ds \prod_{q=1}^{\floor{\frac{t-1}{2}}}\det\left(\begin{array}{c|c}
   A^{\lambda}_{q-1,1}  & \bar{A}^{\lambda}_{t-q-1,1} \\[0.2cm]
   \hline \\[-0.3cm]
  \bar{A}^{\lambda}_{q-1,1}  & A^{\lambda}_{t-q-1,1}
\end{array}\right)  \times
\begin{cases}\det \left(A^{\lambda}_{\frac{t}{2}-1,1}- \bar{A}^{\lambda}_{\frac{t}{2}-1,1} \right)  & t \text{ even,}\\
1 & t \text{ odd}.
\end{cases}  
    \end{split}
\end{equation}
We now simplify the $2 \times 2$ block determinants.
For $1 \leq q \leq \floor{\frac{t-1}{2}}$, multiplying row $i$ in the top blocks of the matrix by $\bar{x}_i^{q}$ and row $i$ in the
bottom blocks by $x_i^{q}$ for each $i \in [n]$, we get
\begin{equation}
    \begin{split}
        \label{spdet5}
\det & \left(\begin{array}{c|c}
   A^{\lambda}_{q-1,1}  & \bar{A}^{\lambda}_{t-q-1,1} \\[0.2cm]
   \hline \\[-0.3cm]
   \bar{A}^{\lambda}_{q-1,1}  & A^{\lambda}_{t-q-1,1}
\end{array}\right)  \\ 
= \det & \left(\begin{array}{c|c}
   \left(
   x_i^{\beta_j^{(q-1)}(\lambda)+1-q}
   \right)_{\substack{1 \leq i\leq n \\ 1\leq j\leq n_{q-1}(\lambda)}}
   & \left (
   \bar{x}_i^{\beta_j^{(t-1-q)}(\lambda)+1+q}
   \right)_{\substack{1 \leq i\leq n \\ 1\leq j\leq n_{t-1-q}(\lambda)}}\\ \\
   \hline\\
   \left(
   \bar{x}_i^{\beta_j^{(q-1)}(\lambda)+1-q}
   \right)_{\substack{1 \leq i\leq n \\ 1\leq j\leq n_{q-1}(\lambda)}}   & 
   \left (x_i^{\beta_j^{(t-1-q)}(\lambda)+1+q}
   \right)_{\substack{1 \leq i\leq n \\ 1\leq j\leq n_{t-1-q}(\lambda)}} 
   \end{array}\right) \\
   = &\det  \left(
   \begin{array}{c|c}
      A^{\lambda}_{q-1,1-q}  & \bar{A}^{\lambda}_{t-q-1,q+1} \\[0.2cm]
   \hline \\[-0.3cm]
       \bar{A}^{\lambda}_{q-1,1-q}  &  A^{\lambda}_{t-q-1,q+1}
   \end{array}
   \right).  
    \end{split}
\end{equation}
Combining \eqref{num} and \eqref{spdet5}, {we see that} the numerator of \eqref{symp-1} is given by 
\begin{equation}
    \begin{split}
    \label{num11}
\sgn(\sigma_{\lambda}) &
((-1)^{(t-1)n}t)^{n} 
\det \left(
A^{\lambda}_{t-1,1}-\bar{A}^{\lambda}_{t-1,1}
\right) 
(-1)^{\Sigma_1} 
(\det (\gamma_{i,j})_{1 \leq i,j \leq t-1} )^n \\ &
\times \ds \prod_{q=1}^{\floor{\frac{t-1}{2}}}
\det\left(\begin{array}{c|c}
   A^{\lambda}_{q-1,1-q}  & \bar{A}^{\lambda}_{t-q-1,q+1} \\[0.2cm]
   \hline \\[-0.3cm]
  \bar{A}^{\lambda}_{q-1,1-q}  & A^{\lambda}_{t-q-1,q+1}
\end{array}\right) \times
\begin{cases}
\det \left(
A^{\lambda}_{\frac{t}{2}-1,1}- \bar{A}^{\lambda}_{\frac{t}{2}-1,1} 
\right)  & t \text{ even,}\\
1 & t \text{ odd}.
\end{cases}.     
    \end{split}
\end{equation}
Evaluating \eqref{num11} at the empty partition and using \eqref{sigma0}, {we see that} the denominator of \eqref{symp-1} is given by  
\begin{equation}
    \begin{split}
        \label{spdet4}
(-1)^{\frac{t(t-1)}{2}\frac{n(n+1)}{2}}  ((-1)^{(t-1)n}t)^{n} 
\det \left(A_{t-1,1}-\bar{A}_{t-1,1}\right) & 
(-1)^{\Sigma^0_{1}} (\det (\gamma_{i,j})_{1 \leq i,j \leq t-1} )^n \\\
\times 
\ds \prod_{q=1}^{\floor{\frac{t-1}{2}}}\det\left(\begin{array}{c|c}
   A_{q-1,1-q}  & \bar{A}_{t-q-1,q+1} \\[0.2cm]
   \hline \\[-0.3cm]
  \bar{A}_{q-1,1-q}  & A_{t-q-1,q+1}
\end{array}\right) & \times
\begin{cases}\det \left(A_{\frac{t}{2}-1,1}- \bar{A}_{\frac{t}{2}-1,1} \right)  & t \text{ even,}\\
1 & t \text{ odd,}
\end{cases}  
    \end{split}
\end{equation}
where 
\[
\Sigma^0_{1}
=\sum_{q=1}^{\floor{\frac{t-1}{2}}}
2n
+\begin{cases}
0 & t \text{ odd},\\
n \ds \sum_{q=1}^{\frac{t-2}{2}} n  &  t \text{ even}.
\end{cases}
\]

For $0 \leq i \leq \floor{\frac{t-3}{2}}$, let $\ds \mu^{(1)}_i 
=  \lambda^{(t-2-i)}_1 + \left(\lambda^{(i)}, 0,\dots,0, -\rev(\lambda^{(t-2-i)})\right)$. Since
$n_{i}(\lambda)+n_{t-2-i}(\lambda) =  2n$, \cref{lem:s-new} gives 
\begin{equation}
\label{scfinal}
s_{\mu^{(1)}_i}(X^t,{\X}^t)= \frac{(-1)^{\frac{n_{t-i-2}(\lambda)(n_{t-i-2}(\lambda)-1)}{2}}}{(-1)^{\frac{n(n-1)}{2}}}
   \frac{
  \det
   \left(\begin{array}{c|c}
   A^{\lambda}_{i,-i}  & \bar{A}^{\lambda}_{t-2-i,i+2} \\[0.2cm]
   \hline \\[-0.3cm]
  \bar{A}^{\lambda}_{i,-i}  & A^{\lambda}_{t-2-i,i+2}
\end{array}\right)}
{
\det
\left(\begin{array}{c|c}
   A_{i,-i}  & \bar{A}_{t-2-i,i+2} \\[0.2cm]
   \hline \\[-0.3cm]
  \bar{A}_{i,-i}  & A_{t-2-i,i+2}
\end{array}\right)}.
\end{equation}
Now substitute \eqref{num11} and \eqref{spdet4} in \eqref{symp-1}, and then use \eqref{sp-new} for $p=t-1$, \eqref{oo-new} for $p=\frac{t}{2}-1$ and \eqref{scfinal} for $0 \leq i \leq \floor{\frac{t-3}{2}}$.
The symplectic character is thus given by 
\begin{multline*}
\sp_{\lambda}(X,\omega X,\dots ,\omega^{t-1}X) \\ 
= (-1)^{\epsilon}
\sgn(\sigma_{\lambda})
\sp_{\lambda^{(t-1)}}(X^t)
\prod_{i=0}^{\floor{\frac{t-3}{2}}} 
s_{\mu^{(1)}_i}(X^t,{\X}^t) \times 
\begin{cases}
\oo_{\lambda^{\left(\frac{t}{2}-1\right)}}(X^t) & t \text{ even},\\
1 & t \text{ odd},
\end{cases}
\end{multline*}
where 
\begin{multline*}
\epsilon= 
\frac{t(t-1)}{2}\frac{n(n+1)}{2} 
+ \ds \sum_{q=0}^{\floor{\frac{t-3}{2}}}
\left(n_{q}(\lambda)-n\right) 
\times 
\begin{cases}
n+1 &  t \text{ even}\\
1 &  t \text{ odd}
\end{cases} \\
+  \sum_{i=0}^{\floor{\frac{t-3}{2}}}
\left( 
\frac{n(n-1)}{2} - \frac{n_{t-i-2}(\lambda)(n_{t-i-2}(\lambda)-1)}{2} \right).
\end{multline*}
It remains to compute the sign by simplifying the expression for $\epsilon$.
Since for $0 \leq q \leq \floor{\frac{t-3}{2}}$, $n_q(\lambda)+n_{t-2-q}(\lambda)=2n$, replacing $n_q(\lambda)-n$ by $n-n_{t-2-q}(\lambda)$ in the first summation and then using the facts that $\frac{(t-1)(t+1)}{2}\frac{n(n+1)}{2}$ is even for odd $t$ and {the parity of $\frac{n(n+1)}{2}\frac{(t^2-2)}{2}$ is the same as the parity} of $\frac{n(n+1)}{2}$ for odd $t$ shows that $\epsilon$ has the same parity as
\begin{multline*}
-\sum_{i=0}^{\floor{\frac{t-3}{2}}} \binom{n_{t-2-i}(\lambda)+1}{2} + \begin{cases}
\frac{n(n+1)}{2}+nr & t \text{ even},
\\ 0 & t \text{ odd},
\end{cases}\\
=-\sum_{i=\floor{\frac{t}{2}}}^{t-2} \binom{n_{i}(\lambda)+1}{2} + \begin{cases}
\frac{n(n+1)}{2}+nr & t \text{ even},
\\ 0 & t \text{ odd},
\end{cases}
\end{multline*}
where $r$ is the rank from \cref{lem:rank-sympcore-etc}(1).
This completes the proof. 
\end{proof}

\subsection{Even orthogonal characters}
\label{sec:eorth}

We first recall the matrices 
$A_{p,q}^{\lambda}$ and $\bar{A}_{p,q}^{\lambda}$ from \eqref{def AB}.
If $\ds \sum_{i=1}^{t-1}n_{i}(\lambda) \allowbreak = (t-1)n$, 
then 
consider the $(t-1)n \times (t-1)n$ block matrix
\[
\Pi_2= \left(\omega^{pq}A^{\lambda}_{q}+\bar{\omega}^{pq}
\bar{A}^{\lambda}_{q}\right)_{1 \leq p,q \leq t-1}.
\]
{Substitution of} $U_j=A_j^{\lambda}$, $V_j=-\bar{A}_j^{\lambda}$ and for $1 \leq j \leq t-1$,
\[
\gamma_{i,j} = \begin{cases}
\omega^{\frac{i(j+1)}{2}} & j \text{ odd },\\
\omega^{-\frac{ij}{2}} & j \text{ even }, 
\end{cases}
\]
in \cref{lem:det-blockmatrix} proves the following corollary. 

\begin{cor} 
\label{cor:det-even}
\begin{enumerate}
\item If $n_{i}(\lambda)+n_{t-i}(\lambda)\neq 2n$ 
for some $i \in [\floor{\frac{t}{2}}]$, then
$\det \Pi_2 = 0$.

\item If $n_{i}(\lambda)+n_{t-i}(\lambda) = 2n$ for all $i \in [\floor{\frac{t}{2}}]$, then 
\begin{equation}
\begin{split}
   \label{eodet3}
 \det \Pi_2 
= (-1)^{\Sigma_2}  \left(\det (\gamma_{i,j})_{1 \leq i,j \leq t-1} \right)^{n} &
\prod_{q=1}^{\floor{\frac{t-1}{2}}}
\det\left(\begin{array}{c|c}
   A^{\lambda}_{q}  & \bar{A}^{\lambda}_{t-q} \\[0.2cm]
   \hline \\[-0.3cm]
   \bar{A}^{\lambda}_{q}  & A^{\lambda}_{t-q}
   \end{array}\right) \\ &
\times
\begin{cases}
\det \left(A^{\lambda}_{\frac{t}{2}}+ \bar{A}^{\lambda}_{\frac{t}{2}} \right)  & t \text{ even}\\
1 & t \text{ odd}
\end{cases}    
\end{split}
\end{equation}
\end{enumerate}
where 
\[
\Sigma_2 =
\begin{cases}
n \ds \sum_{q=1}^{\frac{t-2}{2}} n_{q}(\lambda) 
&  t \text{ even},\\
0 &  t \text{ odd}.
\end{cases}
\]
\end{cor}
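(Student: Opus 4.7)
The proof is a direct application of \cref{lem:det-blockmatrix} to the substitution announced just before the statement: $k=t-1$, $u_j = n_j(\lambda)$, $U_j = A^{\lambda}_j$, $V_j = -\bar{A}^{\lambda}_j$, and the $\gamma_{i,j}$ as stated. My plan is to verify the identification $\Pi = \Pi_2$, read off the two cases, and then carry out the sign bookkeeping.

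First, I would verify blockwise that the generic matrix $\Pi$ of \cref{lem:det-blockmatrix} becomes $\Pi_2$ under this substitution. For a column block $q$ in the first half, the expression $\gamma_{p,2q-1} U_q - \gamma_{p,2q} V_q$ equals $\omega^{pq} A^{\lambda}_q - \omega^{-pq}(-\bar{A}^{\lambda}_q) = \omega^{pq} A^{\lambda}_q + \bar{\omega}_t^{pq} \bar{A}^{\lambda}_q$, using the piecewise formulas for $\gamma_{i,j}$ at odd and even second index. For the second half, $\omega^t = 1$ gives $\gamma_{p,2t-2q} = \omega^{-p(t-q)} = \omega^{pq}$ and $\gamma_{p,2t-1-2q} = \bar{\omega}_t^{pq}$, so all blocks match $\Pi_2$.

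For part~(1), \cref{lem:det-blockmatrix}(1) gives $\det \Pi_2 = 0$ whenever $u_p + u_{k+1-p} \neq 2n$ for some $p$, which here is the condition $n_p(\lambda) + n_{t-p}(\lambda) \neq 2n$ for some $p \in \{1,\dots,\lfloor t/2 \rfloor\}$, as required. For part~(2), assuming these pair sums all equal $2n$, \cref{lem:det-blockmatrix}(2) applies. A short check shows that the $W_i$ of \cref{lem:det-blockmatrix} become the $2 \times 2$ block matrices appearing in \eqref{eodet3}: the two minus signs in $-V_i$ and $-V_{k+1-i}$ combine with the convention $V_j = -\bar{A}^{\lambda}_j$ to leave the antidiagonal $\bar{A}^{\lambda}$ blocks with positive sign. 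When $t$ is even, the central block $W_{t/2} = U_{t/2} - V_{t/2} = A^{\lambda}_{t/2} + \bar{A}^{\lambda}_{t/2}$, which matches the extra factor in \eqref{eodet3}.

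The remaining task is to reduce the sign $(-1)^{\Sigma}$ coming from \cref{lem:det-blockmatrix} to the $(-1)^{\Sigma_2}$ stated in the corollary. Expanding $\Sigma$ for $k = t-1$ and simplifying modulo $2$ using the pairing constraint $n_i(\lambda) + n_{t-i}(\lambda) = 2n$ (together with the hypothesis $\sum_{i=1}^{t-1} n_i(\lambda) = (t-1)n$) reduces the expression to the stated $\Sigma_2$. I expect this sign bookkeeping to be the main technical obstacle, but it is routine once the blockwise identification in the first step is in place, since every remaining step is then a mechanical substitution into \cref{lem:det-blockmatrix}.
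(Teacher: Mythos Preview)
Your proposal is correct and is exactly the paper's approach: the paper's entire proof is the sentence immediately preceding the corollary, stating that the indicated substitution of $U_j=A_j^{\lambda}$, $V_j=-\bar{A}_j^{\lambda}$ and the given $\gamma_{i,j}$ into \cref{lem:det-blockmatrix} yields the result. Your write-up in fact supplies more detail than the paper (the blockwise verification that $\Pi=\Pi_2$, the check that $\gamma_{i,k+1}=\gamma_{i,k}$ via $\omega^t=1$, and the identification of the $W_i$), while the sign bookkeeping you flag as the ``main technical obstacle'' is likewise left implicit in the paper.
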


We now give a sketch of the proof of \cref{thm:eorthfact} following similar ideas as in the proof of \cref{thm:sympfact}.

\begin{proof}[Proof of \cref{thm:eorthfact}]
Using the formula for even orthogonal characters is \eqref{oedef}, {we see that}
desired polynomial is
\begin{equation}
 \label{eors}
\oe_{\lambda}(X,\omega X,\dots ,\omega^{t-1}X) =
\frac{ {2}
\det\left( \left( (\omega^{p-1}x_i)^{\beta_{j}(\lambda)}
    +(\bar{\omega}^{p-1}\bar{x}_i)^{\beta_{j}(\lambda)}
    \right)_{\substack{1 \leq i\leq n \\ 1\leq j\leq tn}} 
    \right)_{1\leq p \leq t}}
    {{(1+\delta_{\lambda_{tn},0})} \det\left( \left( (\omega^{p-1}x_i)^{tn-j}
    +(\bar{\omega}^{p-1}\bar{x}_i)^{tn-j}
    \right)_{\substack{1 \leq i\leq n \\ 1\leq j\leq tn}}
    \right)_{1\leq p \leq t}}.   
\end{equation}
After permuting the columns of the determinant in the numerator of \eqref{eors} by $\sigma_{\lambda}$ from \eqref{sigma-perm}, {we see that}
the numerator of  \eqref{eors} becomes
\begin{equation}
 \label{eofinal}
  {2} \sgn(\sigma_{\lambda}) 
  \det \left(
  \omega^{(p-1)(q-1)} A^{\lambda}_{q-1}+\bar{\omega}^{(p-1)(q-1)}\bar{A}^{\lambda}_{q-1}
  \right)_{1\leq p,q\leq t}.
 \end{equation}
By applying the block operation
$ R_1 \rightarrow R_1+R_2+\dots+R_{t}$ 
and then $R_i \rightarrow R_i-\frac{1}{t}R_1$, $2 \leq i \leq t$, {we see that}
the numerator of  \eqref{eors} is
\begin{equation*} 
{2} \sgn(\sigma_{\lambda})
\det\left(\begin{array}{c|c|c|c|c}
A^{\lambda}_{0}+\bar{A}^{\lambda}_{0} & 0 & \dots & 0 & 0\\
&&&\\\hline&&&\\
0 & \omega A^{\lambda}_{1}+\omega^{t-1}\bar{A}^{\lambda}_{1} & \dots & \omega^{t-2}A^{\lambda}_{t-2}+\omega^2 \bar{A}^{\lambda}_{t-2} & \omega^{t-1}A^{\lambda}_{t-1}+\omega \bar{A}^{\lambda}_{t-1} \\
&&&\\\hline&&&\\
0 & \omega^2A^{\lambda}_{1}+\omega^{t-2}\bar{A}^{\lambda}_{1} & \dots & \omega^{t-4}A^{\lambda}_{t-2}+\omega^4 \bar{A}^{\lambda}_{t-2}&\omega^{t-2}A^{\lambda}_{t-1}+\omega^2\bar{A}^{\lambda}_{t-1}\\&&&\\\hline&&&\\
\vdots & \vdots & \ddots & \vdots & \vdots \\&&&\\\hline&&&\\
0 & \omega^{t-1}A^{\lambda}_{1}+\omega \bar{A}^{\lambda}_{1} & \dots & \omega^2 A^{\lambda}_{t-2}+\omega^{t-2} \bar{A}^{\lambda}_{t-2}&\omega A^{\lambda}_{t-1}+\omega^{t-1}\bar{A}^{\lambda}_{t-1}
\end{array}\right)
\end{equation*} 
This is a $2 \times 2$ block diagonal matrix. 

If $\core \lambda t$ is not an orthogonal $t$-core, then by \cref{cor:ocore},
either $n_{0}(\lambda) \neq n$ or  $n_{i}(\lambda)+n_{t-i}(\lambda) \neq  2n$ for some $i \in [\floor{\frac{t}{2}}]$.
If $n_{0}(\lambda) \neq n$, then the above determinant is 0 by \cref{lem:matrix}. 
If $n_{0}(\lambda) = n$, then the numerator of  \eqref{eors} is
\begin{multline}
 \label{eodet2}
{2} \sgn(\sigma_{\lambda}) t^n \det \left(A^{\lambda}_{0}+\bar{A}^{\lambda}_{0}\right)
\det \left( \left(\omega^{(p-1)(q-1)}A^{\lambda}_{q-1}+\bar{\omega}^{(p-1)(q-1)}\bar{A}^{\lambda}_{q-1}\right)_{2 \leq p,q \leq t}\right).
\end{multline}
where the last determinant in \eqref{eodet2} is the determinant of $\Pi_2$, computed in \cref{cor:det-even}. 
If $n_{i}(\lambda)+n_{t-i}(\lambda) \neq  2n$ for some $i \in [\floor{\frac{t}{2}}]$,
then this is $0$ by \cref{cor:det-even}(1). In both cases,
\[
\oe_{\lambda}(X,\omega X,\dots ,\omega^{t-1}X) = 0.
\]

If $\core \lambda t$ is an orthogonal $t$-core, then
by \cref{cor:ocore}, $n_{0}(\lambda) = n$ and $n_{i}(\lambda)+n_{t-i}(\lambda) =  2n$, $i \in [\floor{\frac{t}{2}}]$.
Using \eqref{eodet2} and \cref{cor:det-even}(2), {we see that} the numerator of  \eqref{eors} is
\begin{equation}
    \begin{split}
     \label{eodet31}
 {2} \sgn(\sigma_{\lambda})
 t^n
 \det \left(A^{\lambda}_{0}+\bar{A}^{\lambda}_{0}\right)
 &\det((\gamma_{i,j})_{1 \leq i,j \leq t-1} )^n
 (-1)^{\Sigma_2}  \\ &\times \prod_{q=1}^{\floor{\frac{t-1}{2}}}
 \det\left(\begin{array}{c|c}
   A^{\lambda}_{q}  & \bar{A}^{\lambda}_{t-q} \\[0.2cm]
   \hline \\[-0.3cm]
   \bar{A}^{\lambda}_{q}  & A^{\lambda}_{t-q}
\end{array}\right) 
\times
\begin{cases}
\det \left(A^{\lambda}_{\frac{t}{2}}+ \bar{A}^{\lambda}_{\frac{t}{2}} \right)  & t \text{ even,}\\
1 & t \text{ odd}.
\end{cases}    
    \end{split}
\end{equation}

 The rest of the proof proceeds in almost complete analogy with the proof of \cref{thm:sympfact}.
Using \eqref{sigma0} {and the fact that $\lambda_{tn} = 0$ if and only if $\lambda^{(0)}_{n} = 0$}, {we see that} the denominator of \eqref{eors} is 
\begin{equation}
    \begin{split}
       \label{eodet4}
(-1)^{\frac{t(t-1)}{2}\frac{n(n+1)}{2}}
t^n & {(1+\delta_{\lambda^{(0)}_{n},0})} \det \left(A_{0}+\bar{A}_{0}\right)
 \det((\gamma_{i,j})_{1 \leq i,j \leq t-1} )^n (-1)^{\Sigma^0_{2}} \\
\times &
 \prod_{q=1}^{\floor{\frac{t-1}{2}}}
 \det\left(\begin{array}{c|c}
   A_{q,-q}  & \bar{A}_{t-q,q} \\[0.2cm]
   \hline \\[-0.3cm]
   \bar{A}_{q,-q}  & A_{t-q,q}
\end{array}\right) \times
\begin{cases}
\det \left(A_{\frac{t}{2}}+ \bar{A}_{\frac{t}{2}} \right)  & t \text{ even}\\
1 & t \text{ odd}
\end{cases}   
    \end{split}
\end{equation}
where 
\[
\Sigma^0_{2}=
\begin{cases}
n \ds \sum_{q=1}^{\frac{t-2}{2}} n &  t \text{ even}.\\
0 &  t \text{ odd}.
\end{cases}
\] 
Taking ratios, {we see that} {one of the factors is exactly} the even orthogonal character of $\lambda^{(0)}$ and the $i$'th determinant in the product of \eqref{eodet31} is calculated using
\begin{equation}
\label{efinal}
s_{\mu^{(2)}_i}(X^t,{\X}^t)= \frac{(-1)^{\frac{n_{t-i}(\lambda)(n_{t-i}(\lambda)-1)}{2}}}{(-1)^{\frac{n(n-1)}{2}}}
   \frac{
  \det
   \left(\begin{array}{c|c}
   A^{\lambda}_{i,-i}  & \bar{A}^{\lambda}_{t-i,i} \\[0.2cm]
   \hline \\[-0.3cm]
  \bar{A}^{\lambda}_{i,-i}  & A^{\lambda}_{t-i,i}
\end{array}\right)}
{
\det
\left(\begin{array}{c|c}
   A_{i,-i}  & \bar{A}_{t-i,i} \\[0.2cm]
   \hline \\[-0.3cm]
  \bar{A}_{i,-i}  & A_{t-i,i}
\end{array}\right)},   
\end{equation}
where $\ds \mu^{(2)}_i
=  \lambda^{(t-i)}_1 + \left(\lambda^{(i)}, 0,\dots,0, -\rev(\lambda^{(t-i)})\right)$.
The only new part is the final determinant, which is calculated
using \eqref{eo-new} and \eqref{oeshifted}, and we get 
\begin{equation}
\label{eo12}  
\frac
{\det \left(A^{\lambda}_{\frac{t}{2}}+ \bar{A}^{\lambda}_{\frac{t}{2}} \right)}
{\det \left(A_{\frac{t}{2}}+ \bar{A}_{\frac{t}{2}} \right)}
=\frac{\oe_{\lambda^{(t/2)}+1/2}(X^t)}
{\ds \prod_{i=1}^n (x_i^{t/2}+\bar{x}_i^{t/2})} = (-1)^{\sum_{i=1}^n \lambda_i^{(t/2)}} \ds {\oo_{\lambda^{(t/2)}}(-X^t)}.
\end{equation}
Finally, the even orthogonal character is given by
\begin{multline*}
\oe_{\lambda}(X,\omega X, \dots ,\omega^{t-1}X) \\ =  (-1)^{\epsilon}
\sgn(\sigma_{\lambda})
\oe_{\lambda^{(0)}}(X^t)
\prod_{i=1}^{\floor{\frac{t-1}{2}}}
s_{\mu^{(2)}_i}(X^t,{\X}^t) 
\times
  \begin{cases}
(-1)^{ \sum_{i=1}^n \lambda_i^{(t/2)}} \displaystyle {\oo_{\lambda^{(t/2)}}(-X^t)} & t \text{ even},\\
1 & t \text{ odd},
\end{cases}
\end{multline*}
where 
\begin{multline*}
\epsilon= 
\frac{t(t-1)}{2}\frac{n(n+1)}{2} 
+ \ds \sum_{q=1}^{\floor{\frac{t-1}{2}}}
\left(n_{q}(\lambda)-n\right) 
\times 
\begin{cases}
n &  t \text{ even}\\
0 &  t \text{ odd}
\end{cases} \\
+  \sum_{i=1}^{\floor{\frac{t-1}{2}}}
\left( 
\frac{n(n-1)}{2} - \frac{n_{t-i}(\lambda)(n_{t-i}(\lambda)-1)}{2} \right).
\end{multline*}
After similar simplifications, the parity of $\epsilon$ shown to be the same as
\[
-\sum_{i=\floor{\frac{t+2}{2}}}^{t-1} \binom{n_{i}(\lambda)}{2} + \begin{cases}
\frac{n(n+t-1)}{2}+nr & t \text{ even},\\ 
\frac{(t-1)n}{2} & t \text{ odd},
\end{cases} 
\]
where $r$ is the rank by \cref{lem:rank-sympcore-etc}(2), completing the proof.
\end{proof}

\subsection{Odd orthogonal characters}
\label{sec:oorth}

Recall the matrices 
$A_{p,q}^{\lambda}$ and $\bar{A}_{p,q}^{\lambda}$ from \eqref{def AB}.
Consider the $tn \times tn$ block matrix
\[ 
\Pi_3=
\left( 
\omega^{(p-1)q} A^{\lambda}_{q-1,1}
-\bar{\omega}^{(p-1)(q-1)}\bar{A}^{\lambda}_{q-1,0}
\right)_{1\leq p,q\leq t}.
\]
{Substitution of} $U_j=A_{j-1,1}^{\lambda}$, $V_j=\bar{A}_{j-1,0}^{\lambda}$ for $1 \leq j \leq t$ and
\[
\gamma_{i,j} = \begin{cases}
\omega^{\frac{(i-1)(j+1)}{2}} & j \text{ odd }\\
\omega^{-\frac{(i-1)(j-2)}{2}} & j \text{ even } 
\end{cases}
\]
in \cref{lem:det-blockmatrix} proves the following corollary.

\begin{cor} 
\label{cor:det-odd}
\begin{enumerate}
\item If $n_{i}(\lambda)+n_{t-1-i}(\lambda)\neq 2n$ for some $i \in [0,\floor{\frac{t-1}{2}}]$, then $\det \Pi_3 = 0$.

\item If $n_{i}(\lambda)+n_{t-1-i}(\lambda) = 2n$ for all $i \in \{0,1,\dots,\floor{\frac{t-1}{2}}\}$, then 
\begin{equation}
    \begin{split}
        \label{odet3}
 \det \Pi_3 
= (\det (\gamma_{i,j})_{1 \leq i,j \leq t} )^{n} (-1)^{\Sigma_{3}} \prod_{q=1}^{\floor{\frac{t}{2}}}
& \det\left(\begin{array}{c|c}
   A^{\lambda}_{q-1,1}  & \bar{A}^{\lambda}_{t-q,0} \\[0.2cm]
   \hline \\[-0.3cm]
   \bar{A}^{\lambda}_{q-1,0}  & A^{\lambda}_{t-q,1}
\end{array}\right)  \\
\times &
\begin{cases}
\det \left(A^{\lambda}_{\frac{t-1}{2},1}- \bar{A}^{\lambda}_{\frac{t-1}{2},0} \right)  & t \text{ odd},\\
1 & t \text{ even},
\end{cases}   
    \end{split}
\end{equation}
where 
\[
\Sigma_{3}
=\sum_{q=1}^{\floor{\frac{t}{2}}} 
\left(n+n_{q-1}(\lambda)\right)+\begin{cases}
n \ds \sum_{q=1}^{\frac{t-1}{2}} n_{q-1}(\lambda) &  t \text{ odd},\\
0 &  t \text{ even.}
\end{cases}
\]
\end{enumerate}
\end{cor}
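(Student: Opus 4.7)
The plan is to show that $\Pi_3$ is exactly an instance of the matrix $\Pi$ from \cref{lem:det-blockmatrix}, with $k=t$, $u_j = n_{j-1}(\lambda)$, $U_j = A^\lambda_{j-1,1}$, $V_j = \bar{A}^\lambda_{j-1,0}$, and the given $\gamma_{i,j}$; then both parts of the corollary are immediate consequences of the corresponding parts of \cref{lem:det-blockmatrix}, up to a small sign reconciliation. First I would verify the compatibility condition $\gamma_{i,t+1}=\gamma_{i,t}$ required by the hypothesis of \cref{lem:det-blockmatrix}: in both parities of $t$ this reduces to an identity of the form $\omega^{(i-1)t}=1$, which holds since $\omega^t=1$.

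Next I would check that the $(p,q)$ block of $\Pi_3$, namely $\omega^{(p-1)q}U_q - \bar{\omega}_t^{(p-1)(q-1)}V_q$, matches the pattern prescribed by \cref{lem:det-blockmatrix} in the two column regimes. For the first $\floor{(t+1)/2}$ block columns one needs $\gamma_{p,2q-1}=\omega^{(p-1)q}$ and $\gamma_{p,2q}=\omega^{-(p-1)(q-1)}$, both of which follow directly from the definition of $\gamma_{i,j}$ on odd and even indices respectively. For the remaining block columns ($\floor{(t+3)/2}\leq q\leq t$) one needs $\gamma_{p,2t+2-2q}=\omega^{(p-1)q}$ and $\gamma_{p,2t+1-2q}=\omega^{-(p-1)(q-1)}$; each of these reduces, via $\omega^t=1$, to the same identities as before. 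This confirms the substitution is valid.

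Now part (1) is immediate from \cref{lem:det-blockmatrix}(1): the condition $u_p + u_{t+1-p} \neq 2n$ for some $p$ translates via $u_j=n_{j-1}(\lambda)$ into $n_{i}(\lambda) + n_{t-1-i}(\lambda)\neq 2n$ for some $i \in [0,t-1]$, which, using the obvious symmetry $i \leftrightarrow t-1-i$, is equivalent to the hypothesis of part (1). For part (2), assuming all $n_{i}(\lambda)+n_{t-1-i}(\lambda)=2n$, \cref{lem:det-blockmatrix}(2) gives $\det\Pi_3 = (-1)^\Sigma (\det\Gamma)^n \prod_{i=1}^{\floor{(t+1)/2}} \det W_i$ with $\Sigma = \sum_{i=1}^{\floor{t/2}}(n + n_{i-1}(\lambda)) + [t\text{ odd}]\cdot n\sum_{i=1}^{(t-1)/2}n_{i-1}(\lambda)$, which matches $\Sigma_3$ on the nose, and with
\[
W_i = \left(\begin{array}{c|c} A^\lambda_{i-1,1} & -\bar{A}^\lambda_{t-i,0} \\\hline -\bar{A}^\lambda_{i-1,0} & A^\lambda_{t-i,1}\end{array}\right),\quad 1\le i\le \floor{t/2},
\]
and $W_{(t+1)/2} = A^\lambda_{(t-1)/2,1}-\bar{A}^\lambda_{(t-1)/2,0}$ in the odd case.

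The one point requiring care is that the corollary's blocks have no minus signs, whereas $W_i$ above does. I would absorb this by multiplying the bottom block row by $-1$ (a factor $(-1)^n$) and the right block column by $-1$ (a factor $(-1)^{n_{t-i}(\lambda)}$), obtaining $\det W_i = (-1)^{n+n_{t-i}(\lambda)}\det\bigl(\begin{smallmatrix}A^\lambda_{i-1,1} & \bar{A}^\lambda_{t-i,0}\\ \bar{A}^\lambda_{i-1,0} & A^\lambda_{t-i,1}\end{smallmatrix}\bigr)$. Summing these extra signs over $1\leq i\leq \floor{t/2}$ and using $n_{t-i}(\lambda)=2n-n_{i-1}(\lambda)$ (hence $n_{t-i}(\lambda)\equiv n_{i-1}(\lambda)\pmod 2$), the parity is unchanged, so the sign still equals $\Sigma_3$. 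The main (only) obstacle is this bookkeeping of signs; everything else is a direct translation of \cref{lem:det-blockmatrix} under the stated substitutions.
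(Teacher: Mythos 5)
Your route is the same as the paper's: the paper's entire proof of this corollary is the substitution $k=t$, $U_j=A^{\lambda}_{j-1,1}$, $V_j=\bar{A}^{\lambda}_{j-1,0}$, $u_j=n_{j-1}(\lambda)$ into \cref{lem:det-blockmatrix}, and your verification of the compatibility condition $\gamma_{i,t+1}=\gamma_{i,t}$, of the two column regimes, of part (1), and of the identification of the lemma's $\Sigma$ with $\Sigma_3$ is correct and considerably more explicit than what the paper records.

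The one step that fails is your final sign reconciliation. You correctly compute $\det W_i=(-1)^{n+n_{t-i}(\lambda)}\det M_i$, where $M_i$ denotes the sign-free block determinant appearing in \eqref{odet3}, but the assertion that the accumulated factor $(-1)^{\sum_{i=1}^{\floor{t/2}}(n+n_{t-i}(\lambda))}$ is trivial is false. By $n_{t-i}(\lambda)\equiv n_{i-1}(\lambda)\pmod 2$ this exponent is congruent to $\sum_{i=1}^{\floor{t/2}}\bigl(n+n_{i-1}(\lambda)\bigr)$, which is exactly the first summand of $\Sigma_3$ and need not be even: for $t=2$, $n=1$, $\lambda=(2,1)$ one has $n_0(\lambda)=0$, $n_1(\lambda)=2$ and the exponent equals $3$. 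So, taking the statement of \cref{lem:det-blockmatrix} at face value, your computation actually produces the sign $(-1)^{\Sigma_3+\sum_{i}(n+n_{i-1}(\lambda))}$, which disagrees with \eqref{odet3} (in the example above it gives $+\det M_1$ while a direct computation of $\det\Pi_3$ gives $2x^3-2\bar{x}=-2\det M_1$). The corollary itself is nevertheless correct; the resolution is that the $\Sigma$ displayed in \cref{lem:det-blockmatrix} already over-counts $\sgn(\zeta)$ by precisely $\sum_{i=1}^{\floor{k/2}}(n+u_i)$ --- the lemma's own proof establishes only $\inv(\zeta)\equiv n\sum_{i=1}^{(k-1)/2}u_i$ for $k$ odd and $\inv(\zeta)\equiv 0$ for $k$ even --- and that surplus is exactly your $W_i\to M_i$ conversion factor. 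To close the argument you must either use the sign that the lemma's proof actually yields for $\sgn(\zeta)$ and then add the conversion factor, or observe that the lemma's $\Sigma$ is the correct sign only once the $W_i$ are replaced by the sign-free blocks $M_i$. As written, ``the parity is unchanged'' is an incorrect claim that happens to compensate for the misstated sign in the lemma.
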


\begin{proof}[Proof of \cref{thm:oorthfact}]
Starting from the formula for the odd orthogonal character in \eqref{oodef}, {we see that} the desired polynomial is
\begin{equation}
\label{so}
 \oo_{\lambda}(X,\omega X, \dots ,\omega^{t-1}X) 
 = \frac{\det\left( \left( (\omega^{p-1}x_i)^{\beta_{j}(\lambda)+1}
 -(\bar{\omega}^{p-1}\bar{x}_i)^{\beta_{j}(\lambda)}
 \right)_{\substack{1 \leq i\leq n \\ 1\leq j\leq tn}} 
 \right)_{1\leq p\leq t}}
 {\det \left( \left( (\omega^{p-1}x_i)^{tn-j+1}-(\bar{\omega}^{p-1}\bar{x}_i)^{tn-j} \right)_{\substack{1 \leq i\leq n \\ 1\leq j\leq tn}} \right)_{1\leq p\leq t}}.
\end{equation}
 We again proceed as in the proof of \cref{thm:sympfact}. 
 Permuting the columns of the determinant in the numerator in \eqref{so} by the permutation $\sigma_{\lambda}$ from \eqref{sigma-perm}, {we see that} the numerator in \eqref{so} is 
\begin{equation}
\label{oofinal}
\sgn(\sigma_{\lambda})\det \left( \omega^{(p-1)q} A^{\lambda}_{q-1,1}-\bar{\omega}^{(p-1)(q-1)}\bar{A}^{\lambda}_{q-1}\right)_{1\leq p,q\leq t}, \end{equation}
where the last determinant in \eqref{eodet2} is the determinant of $\Pi_2$, computed in \cref{cor:det-odd}.
If $\core \lambda t$ is not self-conjugate, then by \cref{cor:con},
$n_{i}(\lambda)+n_{t-1-i}(\lambda) \neq  2n$ for some $i \in \{0,1,\dots,\floor{\frac{t-1}{2}}\}$.
In that case, the determinant is 0 by \cref{cor:det-odd}(1). Hence
\[
\oo_{\lambda}(X,\omega X,\dots ,\omega^{t-1}X) = 0.
\]

If $\core \lambda t$  is self-conjugate, then by \cref{cor:con}, $n_{i}(\lambda)+n_{t-1-i}(\lambda) =  2n$ for all $i \in \{0,1,\dots,\floor{\frac{t-1}{2}}\}$.
By \cref{cor:det-odd}(2), the numerator in \eqref{so} is
\begin{equation}
     \label{oodet3}
\sgn(\sigma_{\lambda}) (\det (\gamma_{i,j})_{1 \leq i,j \leq t})^n (-1)^{\Sigma_{3}} \prod_{q=1}^{\floor{\frac{t}{2}}}
\det\left(\begin{array}{c|c}
   A^{\lambda}_{q-1,1}  & \bar{A}^{\lambda}_{t-q} \\[0.2cm]
   \hline \\[-0.3cm]
   \bar{A}^{\lambda}_{q-1}  & A^{\lambda}_{t-q,1}
\end{array}\right) \times
\begin{cases}
1 & \hspace*{-0.15cm} t \text{ even},\\
\det \left(A^{\lambda}_{\frac{t-1}{2},1}- \bar{A}^{\lambda}_{\frac{t-1}{2}} \right)  & \hspace*{-0.15cm} t \text{ odd}.
\end{cases}  
\end{equation}
We now evaluate the $2 \times 2$ block determinant as follows: for $1 \leq q \leq \floor{\frac{t}{2}}$, we multiply row $i$ in the top blocks by $\bar{x}_i^{q}$ and row $i$ in the bottom blocks by $x_i^{q-1}$, for each $i$.
We then end up with
\begin{equation}
    \begin{split}
        \label{o31}
\sgn(\sigma_{\lambda})
     (\det (\gamma_{i,j})_{1 \leq i,j \leq t})^n
     (-1)^{\Sigma_{3}}
     \prod_{q=1}^{\floor{\frac{t}{2}}}
     \left(\x_1 \x_2 \dots \x_n \det \left(\begin{array}{c|c}
   A_{q-1,1-q}^{\lambda}  &  \bar{A}_{t-q,q}^{\lambda}  \\[0.2cm]
   \hline \\[-0.3cm]
 \bar{A}_{q-1,1-q}^{\lambda}    &  A_{t-q,q}^{\lambda}  \end{array}\right)\right) \\
 \times
\begin{cases}
1 & t \text{ even},\\
\det \left(A^{\lambda}_{\frac{t-1}{2},1}- \bar{A}^{\lambda}_{\frac{t-1}{2}} \right)  & t \text{ odd}.
\end{cases}
    \end{split}
\end{equation}
The denominator in \eqref{so} is therefore
\begin{equation}
    \begin{split}
       \label{oodet8}
 (-1)^{\frac{t(t-1)}{2}\frac{n(n+1)}{2}}
 (\det (\gamma_{i,j})_{1 \leq i,j \leq t})^n  (-1)^{\Sigma_{3}^0} \prod_{q=1}^{\floor{\frac{t}{2}}} \left(\x_1\x_2\dots\x_n \det \left(\begin{array}{c|c}
   A_{q-1,1-q} &  \bar{A}_{t-q,q} \\[0.2cm]
   \hline \\[-0.3cm]
 \bar{A}_{q-1,1-q}   &  A_{t-q,q} \end{array}\right) \right)\\
 \times
\begin{cases}
1 & t \text{ even},\\
\det \left(A_{\frac{t-1}{2},1}- \bar{A}_{\frac{t-1}{2}}\right)  & t \text{ odd},
\end{cases}     
    \end{split}
\end{equation}
where 
\[
\Sigma^0_{3}=\sum_{q=1}^{\floor{\frac{t}{2}}} 2n
+\begin{cases}
n \ds \sum_{q=1}^{\frac{t-1}{2}} n&  t \text{ odd},\\
0 &  t \text{ even}.
\end{cases}
\]
Taking ratios, {we see that} the block determinants are proportional to Schur functions 
using \cref{lem:s-new},
\begin{equation}
\label{ofinal}
s_{\mu^{(3)}_i}(X^t,{\X}^t)= \frac{(-1)^{\frac{n_{t-1-i}(\lambda)(n_{t-1-i}(\lambda)-1)}{2}}}{(-1)^{\frac{n(n-1)}{2}}}
   \frac{
  \det
   \left(\begin{array}{c|c}
   A^{\lambda}_{i,-i}  & \bar{A}^{\lambda}_{t-1-i,i+1} \\[0.2cm]
   \hline \\[-0.3cm]
  \bar{A}^{\lambda}_{i,-i}  & A^{\lambda}_{t-1-i,i+1}
\end{array}\right)}
{
\det
\left(\begin{array}{c|c}
   A_{i,-i}  & \bar{A}_{t-i,i} \\[0.2cm]
   \hline \\[-0.3cm]
  \bar{A}_{i,-i}  & A_{t-i,i}
\end{array}\right)},   
\end{equation}
where $\ds \mu^{(3)}_i 
=  \lambda^{(t-1-i)}_1 + \left(\lambda^{(i)}, 0,\dots,0, -\rev(\lambda^{(t-1-i)})\right)$. The last ratio of determinants gives an odd orthogonal character.
Finally, the odd orthogonal character is given by
\begin{equation*}
 \oo_{\lambda}(X,\omega X, \dots ,\omega^{t-1}X) =  (-1)^{\epsilon}\sgn(\sigma_{\lambda}) 
  \prod_{i=0}^{\floor{\frac{t-2}{2}}}  
  s_{\mu^{(3)}_i}(X^t,{\X}^t)  \times
\begin{cases}
\oo_{\lambda^{\left(\frac{t-1}{2}\right)}}(X^t) & t \text{ odd},\\
1 & t \text{ even},
\end{cases}   
\end{equation*}
where 
\begin{multline*}
\epsilon= 
\frac{t(t-1)}{2}\frac{n(n+1)}{2} 
+ \ds \sum_{q=0}^{\floor{\frac{t-2}{2}}}
\left(n_{q}(\lambda)-n\right) 
\times 
\begin{cases}
n+1 &  t \text{ odd},\\
1 &  t \text{ even},
\end{cases} \\
+  \sum_{i=0}^{\floor{\frac{t-2}{2}}}
\left( 
\frac{n(n-1)}{2} - \frac{n_{t-1-i}(\lambda)(n_{t-1-i}(\lambda)-1)}{2} \right).
\end{multline*}
After similar simplifications, $\epsilon$ turns out to have the same parity as
\[
-\sum_{i=\floor{\frac{t}{2}}}^{t-1} \binom{n_{i}(\lambda)+1}{2} + \begin{cases}
nr & t \text{ odd}
\\ 0 & t \text{ even}
\end{cases} 
\]
where $r$ is the rank by \cref{lem:rank-sympcore-etc}(3), completing the proof.
\end{proof}

\section{Generating functions}
\label{sec:gf}

We now give enumerative results for $z$-asymmetric partitions defined in \cref{def:z-asym}. We first recall that the \emph{$q$-Pochhammer symbol} is given by
\begin{equation}
(a; q)_m = \prod_{j=0}^{m-1} (1 - a q^j),
\end{equation}
so that $(a; q)_0 = 1$. We also define the limiting infinite product
\begin{equation}
(a; q)_\infty = \prod_{j=0}^{\infty} (1 - a q^j).
\end{equation}
Many generating functions in the theory of partitions can be naturally expressed in terms of $q$-Pochhammer symbols. For example, the generating function for all partitions is $1/(q; q)_\infty$ and that of strict partitions is $(-q; q)_\infty$.

\begin{prop}
\label{prop:bij}
The number of $z$-asymmetric partitions of $m$ 
is equal to the number of partitions of $m$ 
with distinct parts of the form $2k+1+z$, $k \geq 0$.
\end{prop}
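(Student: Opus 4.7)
The plan is to establish an explicit size-preserving bijection. The core observation is that for a $z$-asymmetric partition $\lambda=(\alpha\mid\alpha+z)$ of rank $r$, the standard formula for the size of a partition in Frobenius coordinates gives
\[
|\lambda| = r + \sum_{i=1}^r \alpha_i + \sum_{i=1}^r (\alpha_i + z) = \sum_{i=1}^r (2\alpha_i + z + 1).
\]
This already suggests the bijection: send $\lambda$ to the partition with parts $\mu_i := 2\alpha_i + z + 1$, for $1 \leq i \leq r$.

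First, I would verify that this is well-defined: since $\alpha$ is a strict partition with $\alpha_1 > \alpha_2 > \cdots > \alpha_r \geq 0$, the values $\mu_i = 2\alpha_i + z + 1$ are strictly decreasing, and each is of the form $2k+1+z$ with $k = \alpha_i \geq 0$. Hence $\mu$ is a partition of $|\lambda|$ into distinct parts of the required form.

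Second, I would construct the inverse. Given a partition $\mu$ of $m$ into distinct parts $\mu_1 > \mu_2 > \cdots > \mu_r$, each of the form $2k+1+z$ with $k \geq 0$, define $\alpha_i := (\mu_i - z - 1)/2$. Since $\mu_i$ has the prescribed form, $\alpha_i$ is a nonnegative integer, and the strict decrease of the $\mu_i$ forces $\alpha_1 > \alpha_2 > \cdots > \alpha_r \geq 0$. Thus $\alpha$ is a strict partition, and $\lambda := (\alpha \mid \alpha + z)$ is a $z$-asymmetric partition. The size identity above shows $|\lambda| = m$, and the two maps are mutual inverses by construction.

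There is no real obstacle here; the only thing to double-check is the size formula $|\lambda| = r + \sum \alpha_i + \sum \beta_i$ for Frobenius coordinates, which is standard (it counts the $r$ diagonal cells, the $\alpha_i$ cells to the right of the diagonal in row $i$, and the $\beta_i$ cells below the diagonal in column $i$). So the whole proof is essentially one displayed computation plus the remark that $\alpha \leftrightarrow \mu$ via $\mu_i = 2\alpha_i + z + 1$ is a bijection between strict partitions and partitions into distinct parts of the form $2k+1+z$.
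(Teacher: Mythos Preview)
Your proof is correct and follows exactly the same bijection as the paper: send $\lambda=(\alpha\mid\alpha+z)$ to the partition with parts $\mu_i=2\alpha_i+z+1$. The paper's proof is terser (it omits the size computation and simply asserts invertibility), but the argument is identical.
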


\begin{proof}
To prove the proposition, we construct a bijection from the set $\mathcal{P}_z$ 
to the set of partitions of $n$ with distinct parts of the form $2k+1+z$, $k \geq 0$. 
If $\lambda=(\alpha|\alpha+z)$ is a $z$-symmetric partition of rank $r$,
then define $\mu$ of length $r$ by $\mu_i = 2\alpha_i+z+1$. Then all the parts of $\mu$ are distinct and of the desired form. This map is clearly invertible.
\end{proof}

\noindent
\cref{prop:bij} immediately gives an expression of the generating function for $z$-asymmetric partitions.

\begin{cor}
\label{cor:gf}
For $z \in \mathbb{Z}$,
\[
\sum_{\lambda \in \mathcal{P}_z}
q^{|\lambda|} = \prod_{k \geq 0} (1 + q^{z + 1 + 2k})=(-q^{z+1};q^2)_{\infty}.
\]
\end{cor}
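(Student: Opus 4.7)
The plan is to deduce this identity directly from \cref{prop:bij} and then recognize the resulting product as the claimed $q$-Pochhammer symbol. Since the bulk of the combinatorics has already been set up, the corollary reduces to a short generating function manipulation.

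First I would verify that the bijection of \cref{prop:bij} preserves size. If $\lambda = (\alpha \mid \alpha + z)$ has rank $r$, then in Frobenius coordinates
\[
|\lambda| = r + \sum_{i=1}^{r} \alpha_i + \sum_{i=1}^{r} (\alpha_i + z) = 2 \sum_{i=1}^{r} \alpha_i + r(z+1),
\]
while the image $\mu$, defined by $\mu_i = 2\alpha_i + z + 1$, satisfies $|\mu| = 2\sum_{i=1}^{r}\alpha_i + r(z+1)$. Hence the map is size-preserving, and therefore
\[
\sum_{\lambda \in \mathcal{P}_z} q^{|\lambda|} = \sum_{\mu} q^{|\mu|},
\]
where on the right $\mu$ ranges over partitions whose parts are distinct and belong to the set $S_z = \{\, 2k+1+z : k \geq 0 \,\}$.

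Second, since such a $\mu$ is determined by the underlying subset of $S_z$, the standard product expansion for partitions with distinct parts from a prescribed set gives
\[
\sum_{\mu} q^{|\mu|} = \prod_{k \geq 0} \bigl(1 + q^{2k+1+z}\bigr).
\]
Finally I would match this with the $q$-Pochhammer notation defined at the start of the section: specializing $a = -q^{z+1}$ and using base $q^2$ yields
\[
(-q^{z+1}; q^2)_\infty = \prod_{j \geq 0} \bigl(1 - (-q^{z+1}) (q^2)^j\bigr) = \prod_{j \geq 0} \bigl(1 + q^{z+1+2j}\bigr),
\]
which is exactly the product just obtained, completing the identification.

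There is no serious obstacle, as \cref{prop:bij} does all the combinatorial work; the only point requiring any care is the size-preservation check above, and a quick comment that the formula is interpreted as a formal identity in $\mathbb{Z}[[q]]$ (with the convention that a possibly nonpositive exponent $z+1$ is handled by interpreting the product formally, matching the $q$-Pochhammer definition).
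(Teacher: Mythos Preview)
Your proposal is correct and follows exactly the route the paper intends: the corollary is stated as an immediate consequence of \cref{prop:bij}, and you have simply written out the size-preservation check and the standard distinct-parts product expansion that the paper leaves implicit. There is nothing to add.
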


We now move on to enumerating $z$-asymmetric partitions which are also $t$-cores. Recall from \cref{lem:z-large} that there are no nontrivial partitions if $z > t - 2$.

\begin{thm}
\label{thm:bijection}
Let $z \leq t - 2$. 
Represent elements of $\mathbb{Z}^{\floor{\frac{t-z}{2}}}$ by $(z_0,\dots, z_{\floor{\frac{t-z-2}{2}}})$ and define $b \in \mathbb{Z}^{\floor{\frac{t-z}{2}}}$ by $\Vec{b}_i=t-z-1-2i$.
Then there exists a bijection 
$\phi : \mathcal{P}_{z,t} \rightarrow \mathbb{Z}^{\floor{\frac{t-z}{2}}}$
satisfying $|\lambda|=t ||\Vec{\phi(\lambda)}||^2-\Vec{b} \cdot \Vec{\phi(\lambda)}$, where $\cdot$ represents the standard inner product.
\end{thm}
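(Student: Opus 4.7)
The plan is to realize the bijection concretely via the multiplicities $n_i(\lambda)$ of the beta set modulo $t$, and then derive the quadratic size formula from the explicit beta-set description of a $t$-core. First I would choose any $n$ large enough that $\ell(\lambda)\le tn$ and set $a_i(\lambda)=n_i(\lambda,tn)-n$ for $0\le i\le t-1$. Because replacing $n$ by $n+1$ shifts every existing beta number by $t$ and appends the block $\{0,1,\dots,t-1\}$ to the beta set, each $n_i$ increases by exactly $1$, so each $a_i(\lambda)$ is independent of $n$; in particular $\sum_i a_i(\lambda)=0$. By Proposition~\ref{prop:mcd-t-core-quo} the tuple $(n_0,\dots,n_{t-1})$ determines the $t$-core uniquely, so the map $\lambda\mapsto(a_0(\lambda),\dots,a_{t-1}(\lambda))$ is injective on the set of all $t$-cores.

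Next I would restrict to $\mathcal{P}_{z,t}$. Lemma~\ref{lem:converse: sym} says exactly that $\lambda\in\mathcal{P}_{z,t}$ iff $a_i(\lambda)+a_{t-z-1-i}(\lambda)=0$ for $0\le i\le t-z-1$ and $a_i(\lambda)=0$ for $t-z\le i\le t-1$. Hence the only free coordinates are $a_0,a_1,\dots,a_{\lfloor(t-z-2)/2\rfloor}$, giving $\lfloor(t-z)/2\rfloor$ independent integer parameters. I would therefore define
\[
\phi(\lambda)=\bigl(a_0(\lambda),a_1(\lambda),\dots,a_{\lfloor(t-z-2)/2\rfloor}(\lambda)\bigr)\in\mathbb{Z}^{\lfloor(t-z)/2\rfloor}.
\]
Surjectivity is obtained by reading the argument in reverse: given any target vector, choose $n$ larger than the maximum absolute value of its entries, extend to a tuple $(n_0,\dots,n_{t-1})$ of nonnegative integers satisfying the constraints (automatically summing to $tn$), and use Proposition~\ref{prop:mcd-t-core-quo}(1) to manufacture the corresponding $t$-core, which lies in $\mathcal{P}_{z,t}$ by Lemma~\ref{lem:converse: sym}.

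Finally I would verify the weight identity. Since $\lambda$ is a $t$-core, its beta set is the disjoint union $\{tj+i:0\le i\le t-1,\ 0\le j<n_i\}$, so
\[
|\lambda|=\sum_{j=1}^{tn}\beta_j(\lambda)-\binom{tn}{2}=t\sum_{i=0}^{t-1}\binom{n_i}{2}+\sum_{i=0}^{t-1}i\,n_i-\binom{tn}{2}.
\]
Substituting $n_i=n+a_i$ and expanding, the terms linear in $n$ cancel against $\binom{tn}{2}$ once $\sum_i a_i=0$ is used, leaving
\[
|\lambda|=\tfrac{t}{2}\sum_{i=0}^{t-1}a_i^2+\sum_{i=0}^{t-1}i\,a_i.
\]
Now I would apply the symmetry relations $a_{t-z-1-i}=-a_i$ and $a_i=0$ for $i\ge t-z$: the quadratic sum becomes $2\|\phi(\lambda)\|^2$, and pairing index $i$ with $t-z-1-i$ in the linear sum produces $\sum_{i=0}^{\lfloor(t-z-2)/2\rfloor}(2i-(t-z-1))a_i=-\vec b\cdot\phi(\lambda)$, where the middle index (if $t-z$ is odd) contributes nothing since $a_{(t-z-1)/2}=0$. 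Combining gives $|\lambda|=t\|\phi(\lambda)\|^2-\vec b\cdot\phi(\lambda)$, as claimed. The main technical checkpoint is the last step, namely verifying that both parities of $t-z$ give the same formula $\vec b_i=t-z-1-2i$; everything else is bookkeeping on beta sets.
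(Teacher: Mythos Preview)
Your proof is correct and follows essentially the same approach as the paper: both define $\phi$ via $(\phi(\lambda))_i=n_i(\lambda,tn)-n$, invoke \cref{lem:converse: sym} to characterize the image and construct the inverse, and compute $|\lambda|$ from the explicit beta set of a $t$-core to obtain $|\lambda|=\tfrac{t}{2}\sum_i a_i^2+\sum_i i\,a_i$ before applying the symmetry $a_{t-z-1-i}=-a_i$. Your write-up is slightly more explicit about the parity check on $t-z$ in the final step, but the argument is otherwise identical.
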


\begin{proof}
Suppose $\lambda \in \mathcal{P}_{z,t}$, of length at most $tn$ for some $n\geq 1$. 
Define the map $\phi$ by
\[
(\phi(\lambda))_i  
\coloneqq
n_{i}(\lambda)-n, 
\quad 0 \leq i \leq \floor{\frac{t-z-2}{2}}.
\]
Since $n$ is not unique, it is not {a priori} clear that $\phi$ is well-defined.
But from the definition of {$n_{i}(\lambda)$,} it is easy to see that
$n_{i}(\lambda)-n=n_{i}(\lambda,tn+t)-n-1$. 
Hence, $\phi(\lambda)$ is indeed well-defined.

To show that $\phi$ is a bijection, we define the inverse of $\phi$ as follows. 
For a vector $\Vec{v} = \left(v_0,v_1,\dots,v_{\floor{\frac{t-z-2}{2}}}\right)$,
let $n=\text{max}\{|v_0|,|v_1|,\dots,|v_{\floor{\frac{t-z-2}{2}}}|\}$ 
and for $0 \leq i \leq t-1$, 
\[
m_i=
\begin{cases}
n+v_i &  0 \leq i \leq \floor{\frac{t-z-2}{2}},\\
n-v_{t-z-1-i}  & \floor{\frac{t-z+1}{2}} \leq i \leq t-z-1,\\
n & \text{ otherwise}
\end{cases}.
\]
By construction, $\ds \sum_{i=0}^{t-1} m_i=tn$, $m_{i}+m_{t-z-1-i}=2n$ for $0 \leq i \leq \floor{\frac{t-z-1}{2}}$, $m_{i}=n$ for $t-z \leq i \leq t-1$.
By \cref{lem:converse: sym}, there is a unique $t$-core $\lambda \in \mathcal{P}_{z,t}$ satisfying $n_{i}(\lambda)=m_i$. and we set
$\phi^{-1}(\Vec{v}) = \lambda$.
Moreover the size of $\lambda$ is computed as
\begin{equation}
\label{lbda}
|\lambda|
= \sum_{i=1}^{tn} \beta_i(\lambda)-\frac{tn(tn-1)}{2}.    
\end{equation}
Since $\lambda$ is a $t$-core, $tj+i$, $0 \leq j \leq n_{i}(\lambda)-1$, $0 \leq i \leq t-1$ are the parts of $\beta(\lambda)$ (see \cref{prop:mcd-t-core-quo}). So, 
\begin{multline*}
 \sum_{i=1}^{tn} \beta_i(\lambda)=\sum_{i=0}^{t-1}
 \left(
 i(n_{i}(\lambda))
 +\frac{n_{i}(\lambda)(n_{i}(\lambda)-1)t}{2}
 \right)\\
= \sum_{i=0}^{t-1}
 \left(
 i(n_{i}(\lambda)-n)
 \right)
 +\frac{tn(t-1)}{2}
 + \frac{t}{2} \sum_{i=0}^{t-1}
 n_{i}(\lambda)^2-\frac{t^2n}{2}.
\end{multline*}
{Substituting this in \eqref{lbda}, we get}
\begin{multline*}
|\lambda|= \sum_{i=0}^{t-1}
\left(i(n_{i}(\lambda)-n)\right)
+\frac{t}{2} \left(\sum_{i=0}^{t-1} n_{i}(\lambda)^2-tn^2\right) \\
= \sum_{i=0}^{t-1}
\left(i(n_{i}(\lambda)-n)\right)
+\frac{t}{2}\sum_{i=0}^{t-1}  \left( n_{i}(\lambda)-n\right)^2.
\end{multline*}
Now observe that
\[
-\Vec{b} \cdot \Vec{v}=\sum_{i=0}^{\floor{\frac{t-z-2}{2}}} (z+1-t+2i)v_i=\sum_{i=0}^{\floor{\frac{t-z-2}{2}}} (z+1-t+2i)(n_{i}(\lambda)-n).
\]
Since $\lambda \in \mathcal{P}_{z,t}$, 
using \cref{lem:converse: sym},  we have
\begin{align*}
-\Vec{b}.\Vec{v} =& \sum_{i=0}^{t-1}i\left(n_{i}(\lambda)-n\right), \\
\sum_{i=0}^{t-1}\left(n_{i}(\lambda)-n \right)^2
= &
2 \sum_{i=0}^{\floor{\frac{t-z-2}{2}}}(n_{i}(\lambda)-n)^2
=
2||\Vec{v}||^2.
\end{align*}
Hence $|\lambda|=t ||\Vec{v}||^2-\Vec{b}.\Vec{v}$, completing the proof.
\end{proof}

Define the \emph{Ramanujan theta function}~\cite[Equation (18.1)]{berndt-1991},
\begin{equation}
f(a,b)= \sum_{n=-\infty}^{\infty} a^{\frac{n(n+1)}{2}}b^{\frac{n(n-1)}{2}},
\end{equation}
which is related to the Jacobi theta function.
There are several nice identities satisfied by $f$. For example, 
$f(a,b) = f(b,a), f(1,a) = 2f(a, a^3)$ and $f(-1,a) = 0$~\cite[Chapter 16, Entry 18]{berndt-1991}. In addition, because of the Jacobi triple product identity, we have~\cite[Chapter 16, Entry 19]{berndt-1991},
\[
f(a,b) = (-a ; a b)_\infty (-b ; a b)_\infty (a b ; a b)_\infty.
\]

Let $p_{z,t}(m)$ be the cardinality of partitions in $\mathcal{P}_{z,t}$ of size $m$.

\begin{cor}
\label{cor:zcore-gf}
For $z \leq t - 2$, we have
\[
\sum_{m \geq 0} p_{z,t}(m) q^m = 
\prod_{i=0}^{\floor{(t-z-2)/2}} f(q^{2i + z+1}, q^{2t-2i-z-1}).
\]
\end{cor}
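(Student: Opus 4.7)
The plan is to apply the bijection $\phi : \mathcal{P}_{z,t} \to \mathbb{Z}^{\floor{(t-z)/2}}$ from \cref{thm:bijection} directly to the generating function. Since $\phi$ is a bijection and $|\lambda| = t\|\vec{\phi(\lambda)}\|^2 - \vec{b} \cdot \vec{\phi(\lambda)}$ with $\vec{b}_i = t-z-1-2i$, I would rewrite
\[
\sum_{m \geq 0} p_{z,t}(m) q^m
= \sum_{\vec{v} \in \mathbb{Z}^{\floor{(t-z)/2}}} q^{t \|\vec{v}\|^2 - \vec{b}\cdot \vec{v}}.
\]

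Next I would observe that both $\|\vec{v}\|^2$ and $\vec{b}\cdot \vec{v}$ split as sums over the coordinates, so the sum factors as
\[
\sum_{m \geq 0} p_{z,t}(m) q^m
= \prod_{i=0}^{\floor{(t-z-2)/2}} \;\sum_{v \in \mathbb{Z}} q^{t v^2 - (t-z-1-2i)\,v}.
\]
This is the key reduction: the single $\floor{(t-z)/2}$-fold sum collapses into one-dimensional lattice sums, one per coordinate.

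The remaining step is to identify each factor with a Ramanujan theta function. Setting $a = q^{2i+z+1}$ and $b = q^{2t-2i-z-1}$ and expanding the exponents in the definition $f(a,b)=\sum_{n\in\mathbb{Z}} a^{n(n+1)/2} b^{n(n-1)/2}$, I would compute
\[
(2i+z+1)\tfrac{n(n+1)}{2} + (2t-2i-z-1)\tfrac{n(n-1)}{2}
= tn^2 - (t-z-1-2i)\,n,
\]
which matches the exponent in the inner sum above term by term. Hence each factor equals $f(q^{2i+z+1},q^{2t-2i-z-1})$, proving the claim.

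This proof is essentially routine once \cref{thm:bijection} is in hand: the only non-obvious step is the algebraic identification of the lattice sum with the Ramanujan theta function via matching the coefficients of $n^2$ and $n$ in the exponent. I do not foresee a real obstacle; all the combinatorial content is packaged in the bijection $\phi$.
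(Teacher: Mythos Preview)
Your proposal is correct and follows essentially the same approach as the paper: apply the bijection of \cref{thm:bijection} to convert the generating function into a sum over $\mathbb{Z}^{\floor{(t-z)/2}}$, factor it into one-dimensional lattice sums, and identify each with a Ramanujan theta function by matching exponents. The paper's proof is identical in structure, with only cosmetic differences in presentation.
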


\begin{proof}
As a consequence of \cref{thm:bijection},
\[
\sum_{m \geq 0} p_{z,t}(m) q^m
= \sum_{\Vec{v} \in \mathbb{Z}^{\floor{\frac{t-z}{2}}}}
\prod_{i=0}^{\floor{\frac{t-z-2}{2}}} 
q^{t v_i^2 -(t-z-1-2i)v_i }
\]
Rewriting the exponent and interchanging the order of summation, {we see that} the generating function becomes
\[
\prod_{i=0}^{\floor{\frac{t-z-2}{2}}} \sum_{v_i \in \mathbb{Z}} 
q^{(2i + z+1)\frac{v_i(v_i+1)}{2}
+(2t-2i-z-1)\frac{v_i(v_i-1)}{2} }=
  \prod_{i=0}^{\floor{(t-z-2)/2}}
f(q^{2i + z+1}, q^{2t-2i-z-1}),
\]
completing the proof.
\end{proof}

We remark that the special case of $z = 0$ (i.e. self-conjugate $t$-cores) in \cref{cor:zcore-gf} was obtained by Garvan--Kim--Stanton~\cite[Equations (7.1a) and (7.1b)]{garvan-kim-stanton-1990}. Thus, our result can be viewed as a generalization of theirs for symplectic and orthogonal partitions, leading to an immediate proof of \cref{thm:inf-cores}.

\section*{Acknowledgements}
{We thank the anonymous referees for their useful comments.}
We thank D. Prasad and A. Prasad for very helpful discussions, and P. Alexandersson for suggesting references.
We acknowledge support from the UGC Centre for Advanced Studies.
AA was partially supported by Department of Science and Technology grant CRG/2021/001592.

\bibliographystyle{alpha} 
\bibliography{Bibliography}

\end{document}